\DeclareRobustCommand{\SkipTocEntry}[5]{}
\definecolor{LOcolor}{RGB}{150,100,0}
\newtheorem{Theorem}{Theorem}[section]
\newtheorem{Lemma}[Theorem]{Lemma}
\newtheorem{Corollary}[Theorem]{Corollary}
\newtheorem{Proposition}[Theorem]{Proposition}
\theoremstyle{definition}
\newtheorem{Definition}[Theorem]{Definition}
\newtheorem{Example}[Theorem]{Example}
\newtheorem{Remark}[Theorem]{Remark}
\numberwithin{equation}{section}
\newcommand{\mR}{\mathbb{R}}                    
\newcommand{\mC}{\mathbb{C}}                    
\newcommand{\ol}[1]{\overline{#1}}
\newcommand{\supp}{\mathrm{supp}}
\newcommand{\p}{\partial}
\newcommand{\gm}{g_{\mathrm{Min}}}
\def\Min{\textrm{Min}}
\def\p{\partial}
\def\R{\mathbb R}
\def\Min{\textrm{Min}}
\DeclareMathOperator{\WF}{WF}
\newcounter{sidenote}
\begin{document}

\title{Rigidity in the Lorentzian Calder\'on problem with formally determined data} 

\author[L. Oksanen]{Lauri Oksanen}
\address{Department of Mathematics and Statistics, University of Helsinki, PO Box 68, 00014 Helsinki, Finland}
\email{lauri.oksanen@helsinki.fi}

\author{Rakesh}
\address{Department of Mathematical Sciences, University of Delaware, Newark, DE 19716, USA}
\email{rakesh@udel.edu}

\author[M. Salo]{Mikko Salo}
\address{Department of Mathematics and Statistics, University of Jyv\"askyl\"a, PO Box 35, 40014 Jyv\"askyl\"a, Finland}
\email{mikko.j.salo@jyu.fi}




\begin{abstract}
We study the Lorentzian Calder\'on problem, where the objective is to determine a globally hyperbolic Lorentzian metric up to a boundary fixing diffeomorphism from boundary measurements given by the hyperbolic Dirichlet-to-Neumann map. This problem is a wave equation analogue of the Calder\'on problem on Riemannian manifolds. We prove that if a globally hyperbolic metric agrees with the Minkowski metric outside a compact set and has the same hyperbolic Dirichlet-to-Neumann map as the Minkowski metric, then it must be the Minkowski metric up to diffeomorphism. In fact we prove the same result with a much smaller amount of measurements, thus solving a formally determined inverse problem. To prove these results we introduce a new method for geometric hyperbolic inverse problems. The method is based on distorted plane wave solutions and on a combination of geometric, topological and unique continuation arguments.
\end{abstract}

\maketitle

\vspace{-10pt}

\section{Introduction} \label{sec_intro}

\subsection{Background} \label{subsec_background}

The Calder\'on problem \cite{calderon1980} is the mathematical model underlying Electrical Impedance Tomography, where the objective is to determine the electrical conductivity of a medium from voltage and current measurements. This is a fundamental inverse boundary value problem for elliptic equations and there is a substantial literature \cite{uhlmann2009}. The case of matrix conductivities has a natural formulation in terms of Riemannian geometry. Let $(M,g)$ be a compact oriented Riemannian manifold with smooth boundary, and consider the Dirichlet problem 
\[
\Delta_g u = 0 \text{ in $M$}, \qquad u|_{\p M} = f,
\]
where $\Delta_g$ is the Laplace-Beltrami operator. The elliptic Dirichlet-to-Neumann map is the operator 
\[
\Lambda_g^{\mathrm{Ell}}: C^{\infty}(\p M) \to C^{\infty}(\p M), \ \ \Lambda_g^{\mathrm{Ell}} f = \p_{\nu} u|_{\p M},
\]
where $\nu$ denotes the unit outer normal vector to $\p M$.

The Calder\'on problem amounts to determining the manifold $M$ and metric $g$ from boundary measurements encoded by $\Lambda_g^{\mathrm{Ell}}$. There is a natural gauge due to coordinate invariance: uniqueness in the Calder\'on problem when $\dim(M) \geq 3$ amounts to showing that if $\Lambda_g^{\mathrm{Ell}} = \Lambda_{g_0}^{\mathrm{Ell}}$, then $g = F^* g_0$ for a diffeomorphism $F: M \to M$ with $F|_{\p M} = \mathrm{id}$. This problem is open. There are positive results when $(M,g)$ is real-analytic \cite{lassas2001} or for certain metrics $g$ having conformal product structure, that is, $g(t,x) = c(t,x)(dt^2 + h(x))$ where $h$ is a Riemannian metric \cite{dos-santos-ferreira2009, dos-santos-ferreira2016, carstea2023}.

The Lorentzian Calder\'on problem is a wave equation analogue of the above problem. Let $(M,g)$ be a Lorentzian manifold with smooth boundary, and let $\Box_g$ be the corresponding wave operator. (See Section \ref{sec_lorentzian} for more on Lorentzian geometry.) We wish to formulate boundary measurements for the equation $\Box_g u = 0$ in $M$. For wave equations it is natural to consider a Cauchy-Dirichlet problem, and a standard condition for the well-posedness of the Cauchy problem is that the metric $g$ is \emph{globally hyperbolic}. For manifolds $M$ with boundary, as in \cite{alexakisfeizmohammadioksanen2022}, well-posedness follows from the existence of a proper, surjective temporal function. A \emph{temporal function} is a smooth function $\tau: M \to \mR$  for which $d\tau$ is timelike, and then $\tau$ can be considered as a global time coordinate. If additionally $\p M$ is timelike, then the problem 
\[
\Box_g u = 0 \text{ in $M$}, \qquad u|_{\p M} = f, \qquad u|_{\{ \tau \ll 0 \}} = 0,
\]
has a unique smooth solution for any $f \in C^{\infty}_c(\p M)$ \cite[Theorem 24.1.1]{hormander}. We can define the hyperbolic Dirichlet-to-Neumann map as the operator 
\[
\Lambda_g^{\mathrm{Hyp}}: C^{\infty}_c(\p M) \to C^{\infty}(\p M), \ \ \Lambda_g f = \p_{\nu} u|_{\p M}.
\]
Uniqueness in the Lorentzian Calder\'on problem, when $\dim(M) \geq 3$, amounts to showing that if $\Lambda_g^{\mathrm{Hyp}} = \Lambda_{g_0}^{\mathrm{Hyp}}$ for two metrics $g$ and $g_0$ satisfying the above conditions, then $F^* g = g_0$ for some diffeomorphism $F: M \to M$ with $F|_{\p M} = \mathrm{id}$.

There is a large literature on the classical case where $(M,g)$ has \emph{ultrastatic} (or product) structure, that is, $M = \mR \times N$ and $g(t,x) = -dt^2 + h(x)$ where $(N,h)$ is a compact Riemannian manifold with smooth boundary. Then the wave operator is $\Box_g = \p_t^2 - \Delta_{h}$, and the question above is equivalent with the Gelfand problem \cite{gelfand1957} or the inverse boundary spectral problem \cite{katchalov2001}. The Boundary Control method, introduced in \cite{belishev1988, belishev1992} and based on the time-optimal unique continuation theorem in \cite{tataru1995}, shows that $\Lambda_g^{\mathrm{Hyp}}$ determines $g$ up to diffeomorphism. There are many further developments, see e.g. \cite{katchalov2001, lassas2014, lassas2018, kurylev2018a}. The unique continuation theorem remains valid for metrics $g(t,x)$ depending real-analytically on $t$, and in this case, a version of the Boundary Control method has been developed in \cite{eskin2007}. However, the Boundary Control method does not extend to general Lorentzian metrics, since the time-optimal unique continuation theorem fails in general \cite{alinhac1983}. It is striking that the obstacles in both the elliptic and hyperbolic Calder\'on problems (i.e.\ absence of real-analyticity or product structure) are similar. The recent works \cite{alexakisfeizmohammadioksanen2022, alexakis2024} make progress in the Lorentzian Calder\'on problem in a fixed conformal class by proving a time-optimal unique continuation theorem where real-analyticity is replaced by Lorentzian curvature bounds.

Another approach to the Lorentzian Calder\'on problem is based on geometrical optics solutions. These are high frequency solutions to $\Box_g u = 0$ that propagate along light rays (=null geodesics), and they allow one to extract information on a Lorentzian scattering relation of $g$ or light ray transforms of lower order terms from $\Lambda_g^{\mathrm{Hyp}}$ \cite{stefanov2018}. However, the light ray transform is only known to be invertible assuming real-analyticity and convex foliations \cite{stefanov2017, mazzucchelli2023} or product structure \cite{feizmohammadi2021a}, and for the Lorentzian scattering relation there are partial results \cite{munozthon2024, stefanov2024b, stefanov2024, uhlmann2021, wang2024}. For related results on the Lorentzian Calder\'on problem see \cite{stefanov2018, feizmohammadi2021b}.

In the ultrastatic case the geometrical optics method reduces the Lorentzian Calder\'on problem to the scattering/lens/boundary rigidity problem, which is another classical inverse problem in Riemannian geometry dating back to \cite{herglotz1907}. A modern geometric formulation was posed in \cite{michel1981}. Among the first contributions, \cite{gromov1983} gave a rigidity result: if $g$ has the same boundary distance function as the Euclidean metric $g_{\mathrm{Eucl}}$, then $g = F^* g_{\mathrm{Eucl}}$ for a boundary fixing diffeomorphism $F$. Much stronger results are now available (see the recent survey \cite{stefanov2019b}). For applications to the Lorentzian Calder\'on problem, see \cite{stefanov2005a, stefanov2016} and references therein.

We now describe the new contributions of this article. We prove an analogue for the Lorentzian Calder\'on problem of the rigidity result in  \cite{gromov1983}: if $g$ is a globally hyperbolic metric in $\mR^{1+n}$ that agrees with the Minkowski metric $\gm$ outside a compact set and satisfies $\Lambda_g^{\mathrm{Hyp}} = \Lambda_{\gm}^{\mathrm{Hyp}}$, then $g = F^* \gm$ for some diffeomorphism with $F = \mathrm{id}$ outside a larger compact set. The main point is that $g$ can be an arbitrary smooth metric in a compact set (no real-analyticity, product type or curvature assumptions) if it is globally hyperbolic. The corresponding rigidity question for the classical Calder\'on problem, that is, does $\Lambda_g^{\mathrm{Ell}} = \Lambda_{g_{\mathrm{Eucl}}}^{\mathrm{Ell}}$ imply that $g$ is isometric to $g_{\mathrm{Eucl}}$, is open. Even the local version, where $g$ is assumed to be close to $g_{\mathrm{Eucl}}$, remains unsolved.

To prove this result we introduce a new method for geometric hyperbolic  inverse problems. The method employs distorted plane waves, which form a useful class of special solutions in inverse hyperbolic problems that are well adapted to reductions to unique continuation problems (compare with geometrical optics solutions, which are related to scattering relation and ray transform problems). Our reduction involves a geometric result (see Theorem \ref{thm_geometric}) that may be interesting in its own right. For our method we do not need time-optimal unique continuation as in the Boundary Control method or as in \cite{alexakisfeizmohammadioksanen2022}. In fact basically any unique continuation result that holds at  large times suffices.

Finally, the method actually requires much less data than the full map $\Lambda_g^{\mathrm{Hyp}}$ whose Schwartz kernel depends on $2n$ variables. We solve a formally determined inverse problem where the metric $g$ depending on $n+1$ variables is determined up to diffeomorphism from measurements depending on $n+1$ variables. The method is somewhat inspired by \cite{rakesh2020a, rakesh2020b, ma2022}, which also use plane waves but adapt the Bukhgeim-Klibanov method  for solving formally determined inverse problems \cite{bukhgeim1981}. See also the related work \cite{romanov2002}. Many further references in this direction are given in the companion article \cite{oksanen2024} that addresses the special case of ultrastatic manifolds.

\subsection{Main results}

We consider a smooth Lorentzian metric $g$ in $\mR^{1+n}$ with signature $(-,+,\ldots,+)$. To simplify the statements we will assume that $n \geq 2$ (see Remark \ref{rmk_onedim} for the case $n=1$ that involves an additional conformal invariance). We assume that $(\mR^{1+n},g)$ is time-oriented and that the space-time structure agrees with the Minkowski one outside $\mR \times \ol{B}$ where $B$ is the unit ball in $\mR^n$. More precisely, considering the Cartesian coordinates $(t,x)$ in $\mR^{1+n}$, we assume that 
\begin{equation} \label{metric_a1}
g=\gm \text{ outside $\mR \times \ol{B}$},
\end{equation}
and choose the time-orientation so that $dt$ is future-directed outside $\mR \times \ol{B}$.

In order to define boundary measurements, we will also need well-posedness of the Cauchy problem for the Lorentzian wave operator $\Box_g$ defined by 
\[
\Box_g u = -|g|^{-1/2} \sum_{j,k=0}^n \p_j(|g|^{1/2} g^{jk} \p_k u).
\]
Here $x_0=t$, and with our sign conventions $\gm = -dt^2 + dx^2$ and $\Box_{\gm} = \p_t^2 - \Delta_x$. We will assume the following (see Definition \ref{def_gh}):
\begin{equation} \label{metric_a2}
\text{$g$ is globally hyperbolic.}
\end{equation}
We recall that a (topological) hypersurface $\Sigma \subset \mR^{1+n}$ is a \emph{Cauchy surface} in $(\mR^{1+n},g)$ if every inextendible timelike curve meets $\Sigma$ exactly once. 
Assumption \eqref{metric_a2} is equivalent with the existence of a smooth spacelike Cauchy surface by Proposition \ref{prop_gh}, and it implies well-posedness of the Cauchy problem for $\Box_g$ with Cauchy data prescribed on any such surface, see Proposition \ref{prop_wp}. Any metric of the form $g = c(t,x)(-dt^2 + h_t(x))$ is globally hyperbolic if $c > 0$ and $h_t$ is a Riemannian metric on $\mR^n$ that has a uniform lower bound and depends smoothly on $t$, see Example \ref{ex_gh} for details and \cite[Section 1.3]{barginouxpfaffle2007} for further examples. On the other hand, if $g$ admits a closed causal curve (causal loop) then $g$ cannot be globally hyperbolic.

Finally we need a unique continuation property for the wave equation. If $r \in \mR$ and $I = (r,\infty)$, we say that $\Box_g$ has the \emph{unique continuation property in $I \times B$} if any $u \in C^{\infty}(I \times \ol{B})$ solving   
\[
\Box_g u = 0 \text{ in $I \times B$}, \qquad u|_{I \times \p B} = \p_{\nu} u|_{I \times \p B} = 0,
\]
must satisfy $u = 0$ in $I_1 \times B$ where $I_1 = (r_1,\infty)$ for some $r_1 > r$. We assume:
\begin{equation} \label{metric_a3}
\left\{ \begin{array}{c}
\text{There is $R_0 > 0$ such that $\Box_g$ has the unique continuation property } \\[3pt]
\text{in $(r,\infty) \times B$ for any $r > R_0$.}
\end{array} \right.
\end{equation}
Examples of metrics on $\mR \times \ol{B}$ satisfying \eqref{metric_a3} include the following:
\begin{itemize}
\item 
Ultrastatic metrics $g = -dt^2 + h(x)$ where $h$ is a Riemannian metric on $\ol{B}$ (for time-optimal unique continuation see \cite{tataru1995}).
\item 
Metrics that admit strictly pseudoconvex functions as in H\"ormander's unique continuation theorem \cite[Theorem 28.3.4]{hormander} whose level sets sweep out an appropriate region in $\mR \times \ol{B}$. If $h$ is a Riemannian metric on $\mR^n$ that admits a strictly convex function $\varphi(x)$, then $\psi = -t^2 + \alpha \varphi(x)$ for suitable $\alpha > 0$ is strictly pseudoconvex for $g = -dt^2 + h(x)$. Another example is given by metrics that satisfy a Lorentzian curvature condition (see \cite[Theorem 2.1]{alexakisfeizmohammadioksanen2022} for time-optimal unique continuation).
\item 
Any smooth compactly supported perturbation of the above examples satisfies \eqref{metric_a3}.
\end{itemize}
There are also counterexamples where unique continuation fails to hold across certain timelike hypersurfaces for wave operators with smooth potentials \cite{alinhac1983, alinhac1995, lerner2019}.

We now describe the boundary measurements in the inverse problem. Let $g$ be a Lorentzian metric on $\mR^{1+n}$ satisfying \eqref{metric_a1}--\eqref{metric_a2}. For any $\omega \in S^{n-1}$ and $s \in \mR$, the Minkowski plane wave $U_{\omega,s}^{\gm}(t,x) = H(t-x \cdot \omega-s)$ solves $\Box_{\gm} U_{\omega,s}^{\gm} = 0$. Here $H(t)$ is the Heaviside function. We define the corresponding (distorted) plane wave $U_{\omega,s} = U_{\omega,s}^g$ for the metric $g$ as the solution of 
\[
\Box_g U_{\omega,s} = 0 \text{ in $\mR^{1+n}$}, \qquad U_{\omega,s}|_{\{\tau \ll 0\}} = H(t-x \cdot \omega-s).
\]
Here $\tau$ is a suitable temporal function that acts as a global time coordinate on a globally hyperbolic manifold, and the above problem is well-posed by global hyperbolicity (for details see Propositions \ref{prop_gh} and \ref{prop_pw}).

The data for our inverse problem is given by the restrictions $U_{\omega,s}|_{\mR \times \p B}$ for all $\omega \in \Omega$ and for all time-delay parameters $s \in \mR$, where  $\Omega$ is the finite set 
\begin{equation} \label{finite_set_def}
\Omega = \{ \pm e_j, \frac{1}{\sqrt{2}}(e_j + e_k) \,:\, 1 \leq j, k \leq n, \ j < k \}.
\end{equation}
Here $e_j \in \mR^n$ is the $j$th coordinate vector.
In other words, we consider the forward operator 
\begin{align*}
\mathcal{F}_{\Omega}: \ \ g \mapsto (\mathcal{F}_{\omega}(g))_{\omega \in \Omega},
\end{align*}
where $\mathcal{F}_{\omega}(g) = (U_{\omega,s}^g|_{\mR \times \p B})_{s \in \mR}$. Note that when \eqref{metric_a1} holds, the measurement $U_{\omega,s}^g|_{\mR \times \p B}$ is well defined as a distribution on $\mR \times \p B$ by wave front set properties.  Indeed, $\WF(U_{\omega,s}^g)$ is contained in the characteristic set of $\Box_g$ by \cite[Theorem 8.3.1]{hormander} and this set is disjoint from $N^*(\mR \times \p B)$ by \eqref{metric_a1}, so $U_{\omega,s}^g|_{\mR \times \p B}$ is well defined by \cite[Corollary 8.2.7]{hormander}.

We will prove the following.

\begin{Theorem} \label{thm_rigidity}
Let $g$ be a smooth Lorentzian metric in $\mR^{1+n}$ satisfying \eqref{metric_a1}--\eqref{metric_a3}, and suppose that 
\[
\mathcal{F}_{\Omega}(g) = \mathcal{F}_{\Omega}(\gm).
\]
Then 
\[
g = F^* \gm,
\]
where $F: \mR^{1+n} \to \mR^{1+n}$ is a diffeomorphism that satisfies $F = \mathrm{id}$ outside $\mR \times \ol{B}$.
\end{Theorem}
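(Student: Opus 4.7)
The plan is to split the proof into a short PDE reduction step and a longer geometric step, the latter being the content of the already-stated Theorem \ref{thm_geometric}, which relies essentially on the unique continuation hypothesis \eqref{metric_a3}.

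First I would convert the boundary data equality into exterior equality of the full distorted plane waves. Fix $\omega\in\Omega$ and $s\in\mR$, and set $w=U_{\omega,s}^{g}-U_{\omega,s}^{\gm}$. Since $g=\gm$ outside $\mR\times\overline{B}$, $w$ solves $\Box_{\gm}w=0$ on the exterior region, its trace on $\mR\times\p B$ vanishes by $\mathcal{F}_{\Omega}(g)=\mathcal{F}_{\Omega}(\gm)$, and it vanishes for $\tau\ll 0$. Well-posedness of the lateral exterior problem for $\Box_{\gm}$ outside a timelike cylinder — a standard energy/finite-propagation argument — then forces $w\equiv 0$ on $\mR^{1+n}\setminus(\mR\times\overline{B})$. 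Hence for every $\omega\in\Omega$ the plane wave $U_{\omega,s}^{g}$ coincides with the Minkowski plane wave outside the ball, which means that the null hypersurface carrying its singular support exits the perturbation region along the same Minkowski null hyperplane along which it entered.

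Next, I would encode each matched plane wave by a phase function $\phi_{\omega}^{g}:\mR^{1+n}\to\mR$ whose level sets are the characteristic hypersurfaces of the family $(U_{\omega,s}^{g})_{s\in\mR}$. By the previous step $\phi_{\omega}^{g}(t,x)=t-x\cdot\omega$ outside $\mR\times\overline{B}$, and inside $\phi_{\omega}^{g}$ solves the eikonal equation $g^{ij}\p_{i}\phi_{\omega}^{g}\,\p_{j}\phi_{\omega}^{g}=0$. The candidate diffeomorphism is then modeled on the Minkowski identities $t=\tfrac{1}{2}(\phi_{e_{j}}+\phi_{-e_{j}})$ and $x_{j}=\tfrac{1}{2}(\phi_{-e_{j}}-\phi_{e_{j}})$:
\[
F=(F_{0},F_{1},\dots,F_{n}),\qquad F_{0}=\tfrac{1}{2}(\phi_{e_{1}}^{g}+\phi_{-e_{1}}^{g}),\qquad F_{j}=\tfrac{1}{2}(\phi_{-e_{j}}^{g}-\phi_{e_{j}}^{g}).
\]
The eikonal identities for the $2n$ axial phases, together with the $\binom{n}{2}$ extra eikonal identities coming from the diagonal directions $\tfrac{1}{\sqrt{2}}(e_{j}+e_{k})$, produce exactly the algebraic system on $\{d\phi_{\omega}^{g}\}$ needed to force $F^{*}\gm=g$ pointwise; this is the counting that dictates the choice of $\Omega$. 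Since $F$ restricts to the identity outside a compact set and pulls a Lorentzian metric back to a Lorentzian one, a short degree argument for proper maps agreeing with $\id$ near infinity upgrades $F$ to a diffeomorphism.

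The main obstacle, and the heart of the paper, is the analytic justification of the second stage: a priori the null hypersurfaces carrying the distorted plane waves can fold, self-intersect, or develop caustics inside $\mR\times\overline{B}$, in which case $\phi_{\omega}^{g}$ fails to be single-valued and the algebraic recipe for $F$ collapses. This is exactly the role of hypothesis \eqref{metric_a3}: any anomalous multi-valuedness or caustic of the phase foliation would manifest as a nontrivial smooth solution of $\Box_{g}u=0$ in some $(r,\infty)\times B$ with vanishing Cauchy data on $\mR\times\p B$, contradicting unique continuation for $r>R_{0}$. The detailed packaging of this topological-geometric argument, which requires only unique continuation at large times rather than the time-optimal unique continuation used in the Boundary Control method, is precisely the content of Theorem \ref{thm_geometric}; with it in hand, Theorem \ref{thm_rigidity} reduces to the short exterior reduction of the first stage.
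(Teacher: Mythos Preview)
Your outline has the right overall shape (exterior reduction, eikonal phases, build $F$ from $\varphi_{\pm e_j}$, algebra from the $\Omega$-directions), but there is a genuine gap and a misattribution of where \eqref{metric_a3} enters.

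First, Theorem \ref{thm_geometric} does \emph{not} use the unique continuation hypothesis \eqref{metric_a3}; it assumes only \eqref{metric_a1}--\eqref{metric_a2} and is proved by purely geometric and topological arguments (no-focal-points via causal past comparison, Hadamard global inverse function theorem). So the statement that caustic-freeness of the phase foliation ``is exactly the role of hypothesis \eqref{metric_a3}'' is incorrect.

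Second, and this is the missing idea: the eikonal relations $g(d\varphi_\omega,d\varphi_\omega)=0$ alone do \emph{not} yield $F^*\gm=g$. With $\sigma=\tfrac12(\varphi_{e_1}+\varphi_{-e_1})$ and $\alpha_j=\tfrac12(\varphi_{-e_j}-\varphi_{e_j})$, your algebraic computation only works once you know $\varphi_\omega=\sigma-\sum_j\omega_j\alpha_j$ for every $\omega\in\Omega$; in particular you need $\varphi_{e_j}+\varphi_{-e_j}=\varphi_{e_1}+\varphi_{-e_1}$ for all $j$, which is not a consequence of the eikonal equations. The paper obtains this by first proving that each $\varphi_\omega$ also solves the \emph{wave equation} $\Box_g\varphi_\omega=0$: from $U_{\omega,s}=u_{\omega,s}H(\varphi_\omega-s)$ one shows $u_{\omega,s}\equiv 1$ via \eqref{metric_a3} and backward Cauchy uniqueness, whence $\Box_g H(\varphi_\omega-s)=0$ forces $\Box_g\varphi_\omega|_{\{\varphi_\omega=s\}}=0$ for all $s$. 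Only then does $v=\varphi_\omega-(\sigma-\sum\omega_j\alpha_j)$ satisfy $\Box_g v=0$ with $v=0$ outside $\mR\times\ol B$, and \eqref{metric_a3} is invoked a second time to conclude $v\equiv 0$. Even after this, the eikonal system gives only $(F^{-1})^*g=\kappa\,\gm$ for some positive $\kappa$; killing the conformal factor requires one more use of $\Box_{\kappa\gm}(t-x_1)=0$, i.e.\ again the wave-equation information, not just the null condition. In short: \eqref{metric_a3} is used not to prevent caustics but to upgrade $\varphi_\omega$ from an eikonal solution to a wave-equation solution, and that upgrade is what makes the algebra go through.
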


If $g$ is Minkowski outside a compact set, then \eqref{metric_a3} is always satisfied and we have the following corollary. By Example \ref{ex_gh} the corollary applies in particular to any metric $g = c(t,x)(-dt^2 + h_t(x))$ with $g = \gm$ outside a compact set.

\begin{Corollary} \label{cor_compact_perturbation}
Let $g$ be a globally hyperbolic Lorentzian metric on $\mR^{1+n}$ that satisfies $g = \gm$ outside $(-T,T) \times \ol{B}$ for some $T > 0$. If $\mathcal{F}_{\Omega}(g) = \mathcal{F}_{\Omega}(\gm)$, then $g = F^* \gm \text{ in $\mR^{1+n}$}$ for some diffeomorphism $F: \mR^{1+n} \to \mR^{1+n}$ with $F = \mathrm{id}$ outside $(-T-2,T+2) \times B$.
\end{Corollary}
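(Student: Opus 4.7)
The plan is to deduce the corollary from Theorem \ref{thm_rigidity}. The only hypothesis of Theorem \ref{thm_rigidity} that is not immediate is the unique continuation assumption \eqref{metric_a3}; the rest of the work is to improve the conclusion ``$F=\mathrm{id}$ outside $\mR \times \ol{B}$'' to the quantitative statement ``$F=\mathrm{id}$ outside $(-T-2,T+2)\times B$''.

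Step 1 (verify \eqref{metric_a3}): Take $R_0 = T$. For any $r > T$, the cylinder $(r,\infty)\times B$ lies entirely in the region where $g=\gm$, so on this cylinder $\Box_g = \p_t^2 - \Delta_x$ is the flat wave operator. This is an ultrastatic metric, and unique continuation in the required form follows from Tataru's theorem, exactly as in the first bulleted example after \eqref{metric_a3}. Conditions \eqref{metric_a1} and \eqref{metric_a2} are given, so Theorem \ref{thm_rigidity} applies and produces a diffeomorphism $F \colon \mR^{1+n} \to \mR^{1+n}$ with $F^* \gm = g$ and $F = \mathrm{id}$ outside $\mR \times \ol{B}$.

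Step 2 (rigidity in the flat time slabs): Set $U_+ = (T,\infty)\times \mR^n$ and $U_- = (-\infty,-T)\times \mR^n$. On $U_\pm$ one has $g = \gm$, so the identity $F^* \gm = g$ becomes $F^* \gm = \gm$ on $U_\pm$. Thus $F|_{U_\pm}$ is a smooth local isometry of the flat Minkowski metric. By the classical rigidity of flat (pseudo-)Riemannian isometries, its differential is a Lorentz transformation at every point and its second derivatives vanish, so on each connected component $U_\pm$ the map $F$ is the restriction of a single Poincar\'e transformation $F_\pm \in \mathrm{O}(1,n)\ltimes \mR^{1+n}$. Since $F = \mathrm{id}$ on the nonempty open set $U_\pm \cap (\mR\times(\mR^n\setminus \ol{B}))$, the affine map $F_\pm$ must be the identity. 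Hence $F = \mathrm{id}$ on $U_+ \cup U_-$.

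Step 3 (assemble): Combining the three regions on which $F=\mathrm{id}$, namely $U_+$, $U_-$, and $\mR\times(\mR^n\setminus\ol{B})$, and using continuity of $F$ across $\{\pm T\}\times\mR^n$ and $\mR\times\p B$, we conclude that $F$ differs from the identity only on $(-T,T)\times B$. Since
\[
(-T,T)\times B \subset (-T-2,T+2)\times B,
\]
this gives $F=\mathrm{id}$ outside $(-T-2,T+2)\times B$, as claimed.

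The main (and essentially only) conceptual point is the Minkowski rigidity used in Step 2; all other steps are routine verifications. There is comfortable slack in the buffer of size $2$ in the final inclusion, so no delicate estimate is needed.
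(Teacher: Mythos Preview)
Your proof is correct, and Step 1 matches the paper's argument (the paper invokes Holmgren--John rather than Tataru, but either works since $g=\gm$ in the relevant slab).

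In Steps 2--3 you take a genuinely different route. The paper exploits the explicit construction of $F$ from the proof of Theorem~\ref{thm_rigidity}: $F=(\sigma,\alpha_1,\dots,\alpha_n)$ is built from eikonal solutions $\varphi_\omega$, and one checks directly that $\varphi_\omega=t-x\cdot\omega$ when $|t|\ge T+2$ (because the relevant null geodesics then avoid the perturbed region), whence $F=\mathrm{id}$ there. Your argument instead treats $F$ as a black box and uses only the conclusions $F^*\gm=g$ and $F=\mathrm{id}$ outside $\mR\times\ol{B}$: on each slab $U_\pm$ where $g=\gm$, the relation $F^*\gm=\gm$ forces $F|_{U_\pm}$ to be an affine Lorentz motion, and the overlap with $\mR\times(\mR^n\setminus\ol{B})$ pins it to the identity. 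This is more elementary, avoids revisiting the eikonal construction, and in fact yields the sharper conclusion $F=\mathrm{id}$ outside $[-T,T]\times\ol{B}$; the paper's approach, by contrast, explains the specific buffer $T+2$ appearing in the statement as the transit time of null geodesics across $\ol{B}$.
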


The above results solve a formally determined inverse problem, stating that a metric depending on $1+n$ variables can be recovered up to diffeomorphism from the knowledge of plane waves $U_{\omega,s}$ on $\mR \times \p B$ for all time delay variables $s \in \mR$ (and for finitely many directions $\omega$). Thus the boundary measurements also depend on $1+n$ variables.

In the standard Lorentzian Calder\'on problem we assume knowledge of the Cauchy data of all solutions of the wave equation instead of just the plane waves. Any metric $g$ satisfying \eqref{metric_a1}--\eqref{metric_a2} restricts to a metric on $\mR \times \ol{B}$ for which the map $\Lambda_g^{\mathrm{Hyp}}: C^{\infty}_c(\mR \times \p B) \to C^{\infty}(\mR \times \p B)$ is well-defined (see Section \ref{subsec_background} and Proposition \ref{prop_tau_proper}). Thus we also obtain the following result.

\begin{Corollary} \label{cor_dnmap}
Let $g$ be a smooth Lorentzian metric in $\mR^{1+n}$ satisfying \eqref{metric_a1}--\eqref{metric_a3}, and suppose that 
\[
\Lambda_g^{\mathrm{Hyp}} = \Lambda_{\gm}^{\mathrm{Hyp}}.
\]
Then $g = F^* \gm$ for some diffeomorphism $F: \mR^{1+n} \to \mR^{1+n}$ with $F = \mathrm{id}$ outside $\mR \times \ol{B}$.
\end{Corollary}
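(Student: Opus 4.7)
The plan is to reduce Corollary~\ref{cor_dnmap} to Theorem~\ref{thm_rigidity} by deriving the hypothesis $\mathcal{F}_{\Omega}(g) = \mathcal{F}_{\Omega}(\gm)$ of the latter from the assumption $\Lambda_g^{\mathrm{Hyp}} = \Lambda_{\gm}^{\mathrm{Hyp}}$. Fix any $\omega \in S^{n-1}$ and $s \in \mR$, and set $u := U_{\omega,s}^g$ and $u_0 := U_{\omega,s}^{\gm}$. Both are global distributional solutions of $\Box_g u = 0$ and $\Box_{\gm} u_0 = 0$ respectively on $\mR^{1+n}$ agreeing with $H(t - x \cdot \omega - s)$ for $\tau \ll 0$, and their Dirichlet and Neumann traces on the timelike hypersurface $\mR \times \p B$ are well defined by the wave front set argument already used to define $\mathcal{F}_\omega(g)$. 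Denote these traces by $f$ and $h$; then $f$ is precisely the measurement $\mathcal{F}_\omega(g)$ at parameter $s$, and $f$ vanishes for $t < s - 1$ since $H(t - x \cdot \omega - s) \equiv 0$ on $\ol{B}$ there.

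I would then construct a global Minkowski solution $\tilde u$ on $\mR^{1+n}$ by setting $\tilde u = u$ outside the cylinder $\mR \times \ol{B}$ and letting $\tilde u$ solve $\Box_{\gm} \tilde u = 0$ in $\mR \times B$ with Dirichlet data $f$ and vanishing in the far past. Outside the cylinder $g = \gm$ by \eqref{metric_a1}, so $\Box_{\gm} \tilde u = 0$ there automatically. The Dirichlet traces of $\tilde u$ from the two sides coincide by construction, and the Neumann traces also coincide:
\[
\p_\nu \tilde u|_{\text{outside}} = h = \Lambda_g^{\mathrm{Hyp}} f = \Lambda_{\gm}^{\mathrm{Hyp}} f = \p_\nu \tilde u|_{\text{inside}},
\]
the middle equality being the hypothesis. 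A standard jump-formula argument then identifies $\tilde u$ as a distributional solution of $\Box_{\gm} \tilde u = 0$ on the whole of $\mR^{1+n}$. For $t$ sufficiently negative, $\tilde u$ agrees with the Minkowski plane wave $H(t - x \cdot \omega - s)$: outside the cylinder this holds because $\tilde u = u$ and $u$ equals the plane wave there, while inside both $\tilde u$ and the plane wave vanish. Uniqueness for the Minkowski Cauchy problem on the spacelike slices $\{t = \mathrm{const}\}$ forces $\tilde u = u_0$ everywhere, and restricting to $\mR \times \p B$ gives $f = u_0|_{\mR \times \p B}$, i.e.\ $\mathcal{F}_\omega(g) = \mathcal{F}_\omega(\gm)$. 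Since this holds for every $\omega \in S^{n-1}$ and $s \in \mR$, and in particular for $\omega \in \Omega$, Theorem~\ref{thm_rigidity} delivers the diffeomorphism $F$ with the claimed properties.

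The main obstacle is that $\Lambda_g^{\mathrm{Hyp}}$ is defined a priori on $C^{\infty}_c(\mR \times \p B)$, whereas $f$ is neither smooth nor compactly supported in time. I would handle this by a truncation-and-limit argument: replace $f$ by $\chi_T f$ with $\chi_T \in C^{\infty}_c(\mR)$ equal to $1$ on $(-\infty, T)$ and supported in $(-\infty, T+1)$. By finite speed of propagation for both $(\mR \times \ol{B}, g)$ and $(\mR \times \ol{B}, \gm)$, combined with the past vanishing of $f$, the Neumann traces of the solutions driven by $\chi_T f$ agree with $h$ and with the interior Neumann trace of $\tilde u$ on $\{t < T - C\}$ for a fixed constant $C$ depending only on $\ol{B}$. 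The hypothesis equates the two for every $T$, and letting $T \to \infty$ yields the pointwise equality of Neumann traces used above. The distributional nature of $f$ itself is controlled by a parallel regularization of the Heaviside, $H \mapsto H_\eps$, in which the entire argument can be carried out in $C^{\infty}$ and then $\eps \to 0$ using the wave front set stability implicit in the definition of $U_{\omega,s}^g$.
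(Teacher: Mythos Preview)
Your approach is essentially the same as the paper's: regularize the plane-wave boundary data so that the $C^{\infty}_c$ definition of $\Lambda^{\mathrm{Hyp}}$ applies, use the hypothesis to match Cauchy data across $\mR \times \p B$, glue to a global solution, and pass to the limit to conclude $U_{\omega,s}^g|_{\mR\times\p B}=U_{\omega,s}^{\gm}|_{\mR\times\p B}$. The paper runs the gluing in the reverse direction (approximating the \emph{Minkowski} plane wave and solving the interior problem for $g$), and it combines your two regularization steps into one by approximating $H$ by $\varphi_j\in C^{\infty}_c(\mR)$ with $\supp(\varphi_j)\subset[-1,\infty)$; then $f_j=\varphi_j(t-x\cdot\omega-s)|_{\mR\times\p B}$ is automatically in $C^{\infty}_c(\mR\times\p B)$, avoiding the separate time-truncation argument.

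One point that is not justified as stated: the constant $C$ in your claim that the solutions agree on $\{t<T-C\}$ cannot depend \emph{only} on $\ol{B}$ when the metric is $g$, since the causal structure of $g$ inside the cylinder is not controlled by Minkowski geometry. What you actually need (and what suffices) is that the region of agreement exhausts $\mR\times\ol{B}$ as $T\to\infty$; this follows from global hyperbolicity via the Cauchy temporal function $\tau$ and Proposition~\ref{prop_tau_proper}(b), which ensures that points on $\mR\times\p B$ with $t\ge T$ have $\tau\to\infty$ as $T\to\infty$, so their causal future misses any fixed compact set.
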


In particular, Corollary \ref{cor_dnmap} is valid for any globally hyperbolic metric $g$ in $\mR^{1+n}$ with $g = \gm$ outside $(-T,T) \times \ol{B}$ for some $T > 0$. In this case $F = \mathrm{id}$ outside $(-T-2,T+2) \times \ol{B}$.

We remark that in the ultrastatic case where $M = \mR \times N$ and $g = -dt^2 + h$ with $(N,h)$ a compact Riemannian manifold with boundary, we obtain similar results from measurements at a fixed time delay $s \in \mR$. This case is addressed in detail in the companion article \cite{oksanen2024}.

\subsection{Methods}

The proof of Theorem \ref{thm_rigidity} proceeds roughly along the following lines.

\begin{enumerate}
\item[1.] 
From boundary measurements of distorted plane waves, we recover information on a single plane wave (SPW) scattering relation of $g$ in finitely many directions. See Definition \ref{def_spw_scattering_relation}. \\[0pt]
\item[2.] 
By comparing the SPW scattering relations of $g$ and $\gm$, we show using geometric and topological arguments that $g$ has a directional simplicity property (a certain family of null geodesics parametrizes $\mR^{1+n}$ smoothly). \\[0pt]
\item[3.] 
From directional simplicity, we observe that the distorted plane waves are in fact conormal distributions related to solutions of eikonal equations. \\[0pt]
\item[4.] 
We construct the desired diffeomorphism $F$ out of solutions of eikonal equations. At this point we employ the unique continuation property \eqref{metric_a3} and specific properties of $\gm$.
\end{enumerate}

Let us now describe the above steps in more detail. Assume that $g$ satisfies \eqref{metric_a1}, fix $\omega \in S^{n-1}$ and write 
\[
\Sigma_{\pm} = \{ (t,x) \in \mR^{1+n} \,:\, x \cdot \omega = \pm 1 \}, \qquad \Sigma_{\pm,\sigma} = \Sigma_{\pm} \cap \{ t = \sigma \}.
\]
Given $z \in \Sigma_-$, let $\gamma_z(r)$ be the $g$-geodesic satisfying $\gamma_z(0) = z$ and $\dot{\gamma}_z(0) = (1,\omega)$ that is defined in the maximal interval $(-\infty,\rho(z))$ where $\rho(z) \in \mR_+ \cup \{ \infty \}$. These geodesics are closely connected to distorted plane waves, since by Proposition \ref{prop_pw} the wave front set of $U_{\omega,s}$ lies over the union of the curves $\gamma_z$ for $z \in \Sigma_{-,s-1}$.

\begin{Definition} \label{def_spw_scattering_relation}
For any $z \in \Sigma_-$ let $r_+(z) = \sup \,\{ r < \rho(z) \,:\, \gamma_z([0,r]) \subset \{ x \cdot \omega \leq 1 \} \}$ be the time when $\gamma_z$ exits $\{ x \cdot \omega \leq 1 \}$, and let $\mathcal{T}_g = \{ z \in \Sigma_- \,:\, r_+(z) = \rho(z) \}$ be the trapped set. Given any positive $\lambda \in C^{\infty}(\Sigma_+)$, the \emph{single plane wave (SPW) scattering relation} is the map 
\[
\beta_g^{\lambda}: \Sigma_- \setminus \mathcal{T}_g \to T^* \Sigma_+, \ z \mapsto (\gamma_z(r_+(z)), \lambda(\gamma_z(r_+(z))) \dot{\gamma}_z(r_+(z))).
\]
We write $\beta_g = \beta_g^1$ when $\lambda \equiv 1$.
\end{Definition}

Thus the SPW scattering relation maps the start point on $\Sigma_-$ of a geodesic $\gamma_z$ going in direction $(1,\omega)$ to its end point and direction at the time when the geodesic exits $\{ x \cdot \omega \leq 1 \}$. Moreover, the end direction may be multiplied by a positive scaling factor $\lambda$. The set $\beta_g(\Sigma_{-,s-1} \setminus \mathcal{T}_g)$ is related to the wave front set of the plane wave $U_{\omega,s}$. Our first result shows that boundary measurements of plane waves determine the SPW scattering relation as a set.

\begin{Theorem} \label{thm_pw_spw_scattering}
Let $g$ be a Lorentzian metric in $\mR^{1+n}$ satisfying \eqref{metric_a1}--\eqref{metric_a2}, let $\omega \in S^{n-1}$, and suppose that 
\[
\mathcal{F}_{\omega}(g) = \mathcal{F}_{\omega}(\gm).
\]
Then there is a positive $\lambda \in C^{\infty}(\Sigma_+)$ such that, for any $\sigma \in \mR$, 
\[
\beta_g(\Sigma_{-,\sigma} \setminus \mathcal{T}_g) = \beta_{\gm}^{\lambda}(\Sigma_{-,\sigma}).
\]
\end{Theorem}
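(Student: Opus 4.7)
The plan is to transfer the equality of boundary data on $\mR\times\p B$ into an equality of the distorted plane waves on the entire exterior of $\mR\times\ol{B}$, and then to read off the scattering data from the wave front set of $U^g_{\omega,s}$ restricted to $\Sigma_+$.

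First I would look at the difference $v = U^g_{\omega,s} - U^{\gm}_{\omega,s}$. On the exterior region $E = \mR^{1+n}\setminus(\mR\times\ol{B})$ it satisfies $\Box_{\gm}v=0$ (since $g=\gm$ there), vanishes on $\mR\times\p B$ by the hypothesis $\mathcal{F}_\omega(g) = \mathcal{F}_\omega(\gm)$, and vanishes for $\tau\ll 0$ by the definition of distorted plane waves. Finite propagation speed and uniqueness for the Minkowski wave equation then force $v\equiv 0$ on $E$. Since $\Sigma_+\subset\ol{E}$ (its intersection with $\mR\times\ol{B}$ is the measure-zero set $\mR\times\{\omega\}$), this gives $U^g_{\omega,s} = U^{\gm}_{\omega,s}$ as distributions on $\Sigma_+$, hence $\WF(U^g_{\omega,s}|_{\Sigma_+}) = \WF(U^{\gm}_{\omega,s}|_{\Sigma_+})$.

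Next I would compute both wave front sets. For the Minkowski side, $U^{\gm}_{\omega,s} = H(t-x\cdot\omega-s)$ has wave front set conormal to $\{t=x\cdot\omega+s\}$; its noncharacteristic restriction to $\Sigma_+$ gives covectors proportional to $dt|_{T^*\Sigma_+}$ over $\Sigma_{+,s+1}$, matching $\beta^\lambda_{\gm}(\Sigma_{-,s-1})$ for a positive smooth $\lambda$ on $\Sigma_+$ under the usual identification of covectors with tangent vectors by $\gm^{-1}$. For the $g$ side, Proposition \ref{prop_pw} places $\WF(U^g_{\omega,s})$ along the lifted null geodesics $\gamma_z$ with $z\in\Sigma_{-,s-1}$, and for non-trapped $z$ the exit point $\gamma_z(r_+(z))$ with covector $(\dot\gamma_z(r_+(z)))^\flat|_{T^*\Sigma_+}$ lies in $\WF(U^g_{\omega,s}|_{\Sigma_+})$ by propagation of singularities. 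Matching the two sides then forces the spatial part of $\dot\gamma_z(r_+(z))$ to be parallel to $\omega$, and combined with the null and forward-exit conditions this yields $\dot\gamma_z(r_+(z)) = \lambda(p)(1,\omega)$ for some $\lambda(p)>0$. The set equality $\beta_g(\Sigma_{-,\sigma}\setminus\mathcal{T}_g) = \beta^\lambda_{\gm}(\Sigma_{-,\sigma})$ for every $\sigma$ then follows as $s$ varies over $\mR$.

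The main obstacle I expect is the wave front set bookkeeping: one must rigorously justify that propagation of singularities produces the scattering covector at $\Sigma_+$ (both inclusions of WF sets, not merely an upper bound), and carefully handle the grazing locus $\mR\times\{\omega\}$ where $\Sigma_+$ touches $\ol{B}$ and where glancing $g$-geodesics could potentially cause trouble. Showing that the scaling $\lambda$ is well-defined, positive, and smooth across all of $\Sigma_+$, including in the limit as exit points approach $\mR\times\{\omega\}$, is the most delicate ingredient.
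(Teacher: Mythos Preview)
Your proposal is correct and follows essentially the same route as the paper: exterior uniqueness (the paper's Lemma~\ref{lem_h_exterior}), then matching two descriptions of the wave front set over $\Sigma_+$ to read off the SPW scattering relation, and finally checking that $\lambda$ is positive and smooth.

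Two technical points are worth noting. First, the paper does not restrict $U^g_{\omega,s}$ to $\Sigma_+$; instead it works with $\WF(U^g_{\omega,s})\cap\pi^{-1}(\Sigma_+)$ in the ambient space. Since $U^g_{\omega,s}=U^{\gm}_{\omega,s}$ on the open exterior $\{|x|>1\}$, the wave front set there is known exactly; propagation of singularities (using \eqref{metric_a1}) then pins down $\WF(U^g_{\omega,s})$ over the grazing line $\mR\times\{\omega\}$ as well, since every $g$-bicharacteristic through such a point immediately enters $\{|x|>1\}$. This sidesteps the ``both inclusions'' worry you raised about restriction. Second, for the exterior uniqueness step the paper invokes propagation of singularities up to the boundary (Melrose--Sj\"ostrand) to first show the difference $v$ is smooth in $\mR\times(\mR^n\setminus B)$, and only then applies energy uniqueness; your ``finite propagation speed'' sketch is morally right but needs this smoothing step to be rigorous for $L^2_{\mathrm{loc}}$ solutions. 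The smoothness of $\lambda$ is handled in the paper exactly as you anticipate, via a transversality argument and the implicit function theorem (Lemma~\ref{lemma_intersection}).
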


The conclusion means in particular that any point $(z_+, \lambda(1,\omega))$, with $z_+ \in \Sigma_{+,\sigma+2}$, is on a geodesic starting at $(z,(1,\omega))$ for some $z \in \Sigma_{-,\sigma}$, but we do not know which point $z$ it came from. At this point the scaling factor $\lambda$ is undetermined, but later we will show that in fact $\lambda \equiv 1$.

The next important step in the proof of Theorem \ref{thm_rigidity} is the following geometric result that can be understood as a directional simplicity result for the Lorentzian metric $g$. Recall that a Riemannian manifold $(M,h)$ is simple if the exponential map at any point $p \in M$ is a diffeomorphism onto $M$, and $M$ is strictly convex and simply connected. The first property means that the radial geodesics emanating from $p$ parametrize $M$ smoothly. The following result states instead that null geodesics emanating from $\Sigma_-$  in direction $(1,\omega)$ parametrize $\mR^{1+n}$ smoothly.

\begin{Theorem} \label{thm_geometric}
Let $g$ be a Lorentzian metric in $\mR^{1+n}$ satisfying \eqref{metric_a1}--\eqref{metric_a2}, let $\omega \in S^{n-1}$, and suppose that for any $\sigma \in \mR$ we have 
\[
\beta_g(\Sigma_{-,\sigma} \setminus \mathcal{T}_g) = \beta_{\gm}^{\lambda}(\Sigma_{-,\sigma})
\]
for some positive $\lambda \in C^{\infty}(\Sigma_+)$. Then $\mathcal{T}_g = \emptyset$, each $\gamma_z$ is defined on all of $\mR$, and the map 
\[
\Phi: \Sigma_- \times \mR \to \mR^{1+n}, \ \ \Phi(z,r) = \gamma_z(r)
\]
is a diffeomorphism. Moreover, $\dot{\gamma}_z(r) = \hat{\lambda}(\gamma_z(r))(1,\omega)$ when $\gamma_z(r) \notin \mR \times B$, where $\hat{\lambda} \in C^{\infty}(\mR^{1+n})$ is positive and satisfies $\hat{\lambda}|_{\{ x \cdot \omega \leq -1\}} = 1$.
\end{Theorem}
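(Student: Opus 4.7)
My plan is to prove the four assertions (vanishing of $\mathcal T_g$, completeness of each $\gamma_z$ on $\mR$, the diffeomorphism property of $\Phi$, and the construction of $\hat\lambda$) in sequence.

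I would first establish that $\beta_g$ is injective on each slice $\Sigma_{-,\sigma}\setminus\mathcal T_g$: if $\beta_g(z_1)=\beta_g(z_2)$ then both $\gamma_{z_i}$ share exit position and exit tangent $\lambda(\cdot)(1,\omega)$, so backward uniqueness of the geodesic ODE forces the two curves to coincide, hence $z_1=z_2$. Combined with the surjectivity provided by the hypothesis, the induced base map $\Sigma_{-,\sigma}\setminus\mathcal T_g\to\Sigma_{+,\sigma+2}\cong\mR^{n-1}$ is a continuous bijection, which invariance of domain promotes to a homeomorphism. The set $\mathcal T_g\cap\Sigma_{-,\sigma}$ is closed by continuous dependence of $r_+$ on initial data and contained in the compact set $\{(\sigma,y)\colon|y+\omega|\leq 1\}$, since every $z=(\sigma,y)$ with $|y+\omega|>1$ has an entirely Minkowski trajectory and hence $r_+(z)=2$. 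A nonempty compact subset of $\mR^{n-1}$ whose complement is homeomorphic to $\mR^{n-1}$ contradicts Alexander duality (for $n\geq 3$) or disconnectedness (for $n=2$), so $\mathcal T_g=\emptyset$. Completeness of $\gamma_z$ on $\mR$ then follows: for $r\leq 0$ the trajectory $z+r(1,\omega)$ has $x\cdot\omega\leq -1$ and lies in the Minkowski region, while for $r\geq r_+(z)$ the post-exit Minkowski line from $\beta_g(z)$ never re-enters $\mR\times\ol B$ by the elementary bound $|y_++s\omega|^2\geq(s+1)^2\geq 1$.

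The main obstacle is showing that $\Phi$ is a diffeomorphism. As a preliminary, running the geodesic ODE backward from the exit data $(\beta_g(z),\lambda(\beta_g(z))(1,\omega))$ yields a smooth inverse to the base map $\beta_g\colon\Sigma_-\to\Sigma_+$, so this base map is itself a diffeomorphism. The full map $\Phi$ is smooth and proper (both Minkowski tails push $\Phi(z,r)\to\infty$ as $(z,r)\to\infty$) and has degree $+1$ since it coincides with the Minkowski diffeomorphism $\Phi_{\gm}(z,r)=z+r(1,\omega)$ outside a relatively compact perturbation region. The hard step is showing that $\Phi$ is a local diffeomorphism, equivalently that the Jacobi fields $J_i(r)=\p\gamma_z/\p z_i$ together with $\dot\gamma_z(r)$ span $T_{\gamma_z(r)}\mR^{1+n}$ at every $r$. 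Non-degeneracy holds automatically at $r=0$ by the pure Minkowski setup, and at $r=r_+(z)$ because the span equals $T\Sigma_+$ plus the transverse direction $\lambda(1,\omega)$, using that $\beta_g$ is a diffeomorphism. I would propagate the non-degeneracy across the interior $\mR\times\ol B$ by exploiting the Lagrangian-subspace structure of the Jacobi fields in phase space together with the rigid exit geometry, which prevents the Lagrangian from becoming vertical and hence rules out conjugate points along $\gamma_z$. A proper local diffeomorphism of degree $+1$ onto the simply connected $\mR^{1+n}$ is a one-sheeted covering and hence a diffeomorphism.

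Once $\Phi$ is a diffeomorphism, each $p\in\mR^{1+n}\setminus(\mR\times B)$ has a unique $(z,r)$ with $\Phi(z,r)=p$; since $g=\gm$ on $\mR^{1+n}\setminus(\mR\times B)$, the tangent $\dot\gamma_z(r)$ is constant along each Minkowski segment, equal to $(1,\omega)$ on the pre-entrance segment (in particular on $\{x\cdot\omega\leq -1\}$) and to $\lambda(\beta_g(z))(1,\omega)$ on the post-exit segment. Letting $\hat\lambda(p)$ be the corresponding positive scalar yields a smooth positive function on $\mR^{1+n}\setminus(\mR\times B)$ with $\hat\lambda|_{\{x\cdot\omega\leq -1\}}=1$, which extends smoothly and positively to all of $\mR^{1+n}$, completing the proof.
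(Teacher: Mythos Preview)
Your argument for $\mathcal T_g=\emptyset$ via invariance of domain and a topological obstruction is different from the paper's, but it can be made to work: once you know $U:=\Sigma_{-,\sigma}\setminus\mathcal T_g\cong\mR^{n-1}$ and $K:=\mathcal T_g\cap\Sigma_{-,\sigma}$ is compact, a large sphere $S_R\subset U$ around $K$ bounds in $U$ (since $H_{n-2}(U)=0$) but is a generator of $H_{n-2}(\mR^{n-1}\setminus\{p\})\cong\mZ$ for any $p\in K$, which is a contradiction. The paper instead runs the geodesics \emph{backward} from $\Sigma_+$, obtains an embedding $\kappa:\Sigma_{+,\sigma+2}\to\Sigma_{-,\sigma}$, and shows that $\kappa(\Sigma_{+,\sigma+2})$ is both open and closed, hence everything; this is shorter and avoids any algebraic topology.

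The real gap is the local diffeomorphism step. Your proposal to ``propagate non-degeneracy by exploiting the Lagrangian-subspace structure \dots\ together with the rigid exit geometry'' is not an argument: knowing that the Jacobi Lagrangian is non-vertical at $r=0$ and at $r=r_+(z)$ does \emph{not} prevent it from becoming vertical somewhere in between. A Lagrangian plane can rotate through the vertical and come out non-vertical again; equivalently, a geodesic can have an interior focal point even when its endpoints are not conjugate. Indeed, the paper's Figure~\ref{fig_cex} exhibits exactly such a situation for an ultrastatic metric built from a spherical strip, where the hypotheses of Theorem~\ref{thm_geometric} fail only because \eqref{metric_a1} is violated---showing that any argument that ignores the causal structure is doomed. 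The paper's proof uses global hyperbolicity in an essential way: it first proves the causal rigidity statement $J^-(\Sigma_{+,\sigma})\cap\Sigma_-=\{t\le\sigma-2\}\cap\Sigma_-$ (Proposition~\ref{prop_jminus}), which requires the temporal function and the fact that $\partial J^-(\Sigma_{+,\sigma})$ is ruled by normal null geodesics. Then, if $D\Phi$ were singular, one would get a focal point of $\Sigma_{-,t_*}$ along some $\gamma_{z_*}$, hence (by \cite[Prop.~48, p.~296]{oneill1983}) a \emph{timelike} curve from $\Sigma_{-,t_*}$ to $\Sigma_{+,t_*+2}$, placing a point of $\Sigma_{-,t_*}$ in the interior of $J^-(\Sigma_{+,t_*+2})$ and contradicting the causal rigidity. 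None of this Lorentzian input appears in your sketch.

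A secondary gap is properness of $\Phi$. Your claim that ``both Minkowski tails push $\Phi(z,r)\to\infty$'' only handles $r\le 0$ and $r\ge r_+(z)$; for $r\in(0,r_+(z))$ the geodesic is inside $\mR\times\ol B$ and one must rule out, e.g., $t_j\to\infty$ with $\gamma_{z_j}(r_j)$ remaining in a fixed compact set. The paper does this by sandwiching $\tau(\gamma_z(r))$ between $\tau(z)$ and $\tau(\gamma_z(r_+(z)))$ and using Proposition~\ref{prop_tau_proper} to convert $\tau$-bounds into $t$-bounds; again the Cauchy temporal function is essential.
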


The result above is an analogue of the result in boundary rigidity stating that the simplicity of a Riemannian metric can be read off from its boundary distance function \cite[Section 3.8]{paternain2023}. For simple manifolds, the knowledge of the boundary distance function is equivalent to the knowledge of the full scattering relation \cite[Section 11.3]{paternain2023}. In contrast, Theorem \ref{thm_geometric} only assumes partial knowledge of the scattering relation (related to null geodesics with initial direction $(1,\omega)$ that is normal to $\Sigma_{-,\sigma}$). The assumption \eqref{metric_a1} is crucial since a local version of Theorem \ref{thm_geometric} may fail, see Figure \ref{fig_cex}.

The proof of Theorem \ref{thm_geometric} begins with the nontrapping property $\mathcal{T}_g = \emptyset$. There the key fact is that any point $w \in \Sigma_+$ lies on some geodesic $\gamma_z$ with $z \in \Sigma_-$, and one obtains a map $\Sigma_+ \to \Sigma_-$, $w \mapsto z$ that turns out to be a diffeomorphism by a topological argument. The next step is to show that the set $J^-(\Sigma_{+,\sigma}) \cap \Sigma_-$, where $J^-$ denotes causal past, agrees with the corresponding set for $\gm$, that is, it is equal to $\{ t \leq \sigma - 2 \} \cap \Sigma_-$. This is based on showing that $\p J^{-}(\Sigma_{+,\sigma})$ is covered by geodesics $\gamma_z$ that are null and normal to $\Sigma_{+,\sigma}$. Finally we show that the derivative of $\Phi$ cannot be singular anywhere, since otherwise one would obtain a focal point of $\Sigma_{-,\sigma-2}$ along some $\gamma_z$ and hence there would be a timelike curve from $\Sigma_{-,\sigma-2}$ to $\Sigma_{+,\sigma}$, which would contradict the earlier statement on  $J^{-}(\Sigma_{+,\sigma})$. The proof of Theorem \ref{thm_geometric} is concluded by invoking the Hadamard global inverse function theorem, which involves showing that $\Phi$ is proper.

\begin{figure}
\centering
\begin{picture}(5cm,4cm)
\put(0,0){\includegraphics[scale=0.6,trim={4.5cm 2.5cm 3.5cm 3cm},clip]{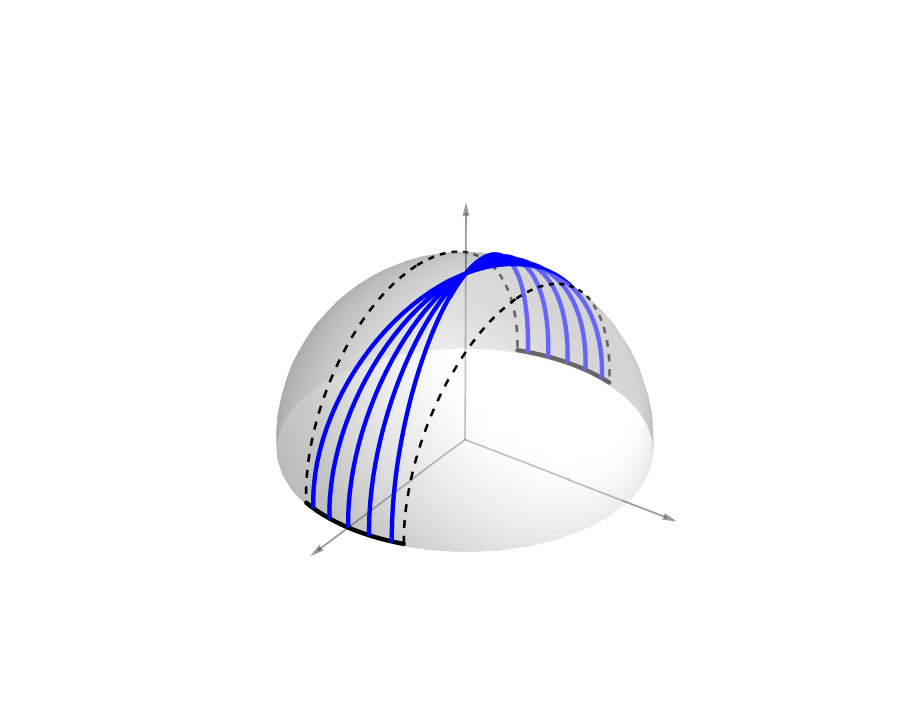}}
\put(0.1cm,0.1cm){$x$}
\put(4.3cm,0.6cm){$y$}
\put(2.2cm,3.85cm){$z$}
\end{picture}
\begin{picture}(7cm,4cm)
\put(0,0){\includegraphics[scale=0.6]{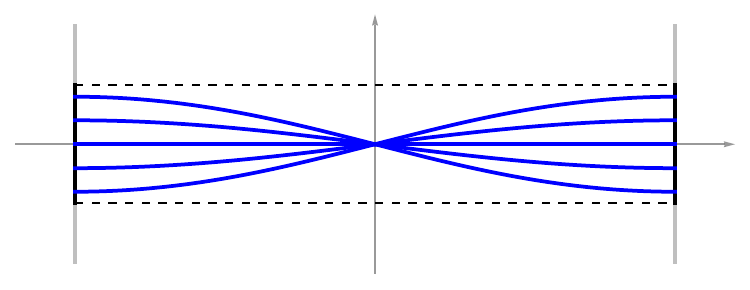}}
\put(4cm,2.8cm){$y$}
\put(7.5cm,1.6cm){$\theta$}
\end{picture}
\caption{The points $p = (x,y,z)$ in the strip on the sphere are mapped 
to the rectangle in the plane by $p \mapsto (\theta, y)$ where $\theta$
is the (signed) angle with the $yz$-plane. 
The pushforward metric under this map can be extended to a smooth Riemannian metric $h$ on the plane. 
Blue $h$-geodesics are normal to the two gray lines $\theta = \pm \pi/2$. The conclusions of Theorem \ref{thm_geometric} do not hold for the metric $g = -dt^2 + h$. 
This ultrastatic metric satisfies \eqref{metric_a2} but $g \ne \gm$ near the boundary of the rectangle, highlighting the importance of \eqref{metric_a1}.
}
\label{fig_cex}
\end{figure}

The next step is to show that the directional simplicity property in Theorem \ref{thm_geometric} implies that the plane waves $U_{\omega,s}$ are conormal distributions that can be represented in terms of solutions of eikonal equations.

\begin{Theorem} \label{thm_eikonal_pw}
Let $g$ satisfy \eqref{metric_a1}--\eqref{metric_a2}, let $\omega \in S^{n-1}$ and suppose that the conclusions of Theorem \ref{thm_geometric} hold. Then there is a smooth function $\varphi_{\omega}$ in $\mR^{1+n}$ solving the eikonal equation 
\[
g(d\varphi_{\omega}, d\varphi_{\omega}) = 0 \text{ in $\mR^{1+n}$}
\]
such that $\varphi_{\omega} = t - x \cdot \omega$ outside $\mR \times \ol{B}$ and $d\varphi_{\omega}$ is nowhere vanishing. Moreover, $U_{\omega,s}$ is a conormal distribution having the representation 
\[
U_{\omega,s} = u_{\omega,s} H(\varphi_{\omega}-s) \text{ in $\mR^{1+n}$},
\]
where $u_{\omega,s}$ is smooth.
\end{Theorem}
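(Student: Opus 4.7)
The proof rests on two pillars. The diffeomorphism $\Phi$ from Theorem \ref{thm_geometric} lets me declare $\varphi_\omega$ to be the function that labels each null geodesic $\gamma_z$ by its departure time. Once $\varphi_\omega$ is in hand, $u_{\omega, s}$ is obtained by solving a first-order transport ODE along the null generators of $\{\varphi_\omega = s\}$ and extending inward via a characteristic Cauchy (Goursat) problem; the representation of $U_{\omega, s}$ then follows from a distributional progressing-wave identity combined with uniqueness of the distorted plane wave.

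\textbf{Construction and properties of $\varphi_\omega$.} Define $\varphi_\omega(\Phi(z, r)) = t_0 + 1$ for $z = (t_0, x_0) \in \Sigma_-$. Then $\varphi_\omega \in C^\infty(\mR^{1+n})$ is constant along each $\gamma_z$, and $d\varphi_\omega$ is nowhere vanishing since $\Phi$ is a diffeomorphism. To verify the eikonal equation, fix $z \in \Sigma_-$, pick $v = (0, w) \in T_z \Sigma_-$ with $w \cdot \omega = 0$, and set $V(r) = d\Phi_{(z, r)}(v)$. Since $V$ and $\dot\gamma_z$ are pushforwards of commuting coordinate fields on $\Sigma_-\times\mR$, $[\dot\gamma_z, V] = 0$, and the geodesic equation together with nullness of $\gamma_z$ gives
\[
\frac{d}{dr}\, g(\dot\gamma_z, V) \;=\; g(\dot\gamma_z, \nabla_V \dot\gamma_z) \;=\; \tfrac{1}{2} V\bigl(g(\dot\gamma_z, \dot\gamma_z)\bigr) \;=\; 0.
\]
The initial value $g(\dot\gamma_z(0), V(0)) = \gm((1, \omega), (0, w)) = w \cdot \omega = 0$ then forces $\dot\gamma_z \perp V$ along all of $\gamma_z$. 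Such vectors $V$ and $\dot\gamma_z$ together span the tangent space of $S_c := \{\varphi_\omega = c\} = \Phi(\Sigma_{-, c-1}\times \mR)$, so the null $\dot\gamma_z$ lies in the radical of $g|_{TS_c}$; hence $S_c$ is characteristic and $g(d\varphi_\omega, d\varphi_\omega) = 0$. The identity $\varphi_\omega = t - x \cdot \omega$ on $\{x \cdot \omega \le -1\}$ is a direct computation from $\hat\lambda \equiv 1$ there; on the rest of $\{|x| > 1\}$, both sides are constant along each $\gamma_z$ (for $t - x\cdot\omega$ this uses $\dot\gamma_z \propto (1,\omega)$ outside the cylinder), and equality is pinned down by the geometric fact, built up in the proof of Theorem \ref{thm_geometric}, that any $\gamma_z$ with $z \in \Sigma_{-, \sigma}$ meets $\Sigma_+$ at $t = \sigma + 2$.

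\textbf{The representation $U_{\omega, s} = u_{\omega, s} H(\varphi_\omega - s)$.} Set $\Sigma_s := \{\varphi_\omega = s\}$. Define $u_{\omega, s}$ on $\Sigma_s$ by integrating the transport ODE $2\, g(du_{\omega, s}, d\varphi_\omega) = u_{\omega, s}\, \Box_g \varphi_\omega$ along each null generator $\gamma_z$, $z \in \Sigma_{-, s-1}$, with $u_{\omega, s}|_{\Sigma_{-, s-1}} = 1$; then extend to $\{\varphi_\omega > s\}$ by solving the characteristic Cauchy problem $\Box_g u_{\omega, s} = 0$ with this boundary trace. This is well-posed with smooth regularity up to $\Sigma_s$ on the globally hyperbolic background $(\mR^{1+n}, g)$ with smooth null characteristic $\Sigma_s$. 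Any smooth extension to $\{\varphi_\omega < s\}$ completes the definition of $u_{\omega, s} \in C^\infty(\mR^{1+n})$. A distributional computation using Leibniz together with $\partial_k H(\varphi_\omega - s) = \delta(\varphi_\omega - s)\partial_k\varphi_\omega$ then shows that the coefficients of $\delta'(\varphi_\omega - s)$ and $\delta(\varphi_\omega - s)$ in $\Box_g(u_{\omega, s} H(\varphi_\omega - s))$ vanish by the eikonal and transport equations, and the smooth part vanishes by $\Box_g u_{\omega, s} = 0$; hence $\Box_g(u_{\omega, s} H(\varphi_\omega - s)) = 0$ in $\mR^{1+n}$. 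In the region $\{\tau \ll 0\}$ one has $\Box_{\gm}(t - x \cdot \omega) = 0$, so the transport reduces to $\gm(du_{\omega, s}, d(t - x \cdot \omega)) = 0$ and the initial data $u_{\omega, s} = 1$ propagates trivially; Goursat uniqueness then forces $u_{\omega, s} \equiv 1$ on $\{\varphi_\omega > s\}\cap\{\tau \ll 0\}$. Therefore $u_{\omega, s} H(\varphi_\omega - s) = H(t - x \cdot \omega - s)$ for $\tau \ll 0$, matching the prescribed past data of $U_{\omega, s}$, and uniqueness of the distorted plane wave (Proposition \ref{prop_pw}) yields $U_{\omega, s} = u_{\omega, s} H(\varphi_\omega - s)$.

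\textbf{Main obstacle.} The most delicate step is the smooth well-posedness of the characteristic Cauchy problem for $\Box_g u_{\omega, s}$ up to $\Sigma_s$ and the rigorous distributional handling of the progressing-wave identity, so that vanishing of the coefficients of $\delta(\varphi_\omega - s)$ and $\delta'(\varphi_\omega - s)$ together with $\Box_g u_{\omega, s} = 0$ in $\{\varphi_\omega > s\}$ genuinely forces $\Box_g(u_{\omega, s} H(\varphi_\omega - s)) = 0$ as a distribution on $\mR^{1+n}$. Extending $\varphi_\omega = t - x \cdot \omega$ from $\{x \cdot \omega \le -1\}$ to all of $\{|x| > 1\}$ is a secondary issue requiring the exit-time information built up during the proof of Theorem \ref{thm_geometric}.
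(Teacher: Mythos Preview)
Your construction of $\varphi_\omega$ is the same object the paper builds (packaged there as Proposition~\ref{prop_eikonal} via the Lagrangian-flowout formalism), and your eikonal verification through conserved inner products of variation fields is a clean equivalent of the Hamiltonian argument. Your extension of $\varphi_\omega = t - x\cdot\omega$ to all of $\{|x|>1\}$ via the exit-time map $\Sigma_{-,\sigma}\to\Sigma_{+,\sigma+2}$ is the alternative route the paper mentions in the Remark following the proof; it works, but note that to cover the strip $\{-1 < x\cdot\omega < 1\}\cap\{|x|>1\}$ you implicitly use that $\gamma_z\cap\{|x|>1\}$ has no ``intermediate'' components disconnected from $\Sigma_\pm$ (which follows since outside the cylinder $\dot\gamma_z\propto(1,\omega)$ forces any exit from $\ol B$ to have $x\cdot\omega\ge 0$ and then $|x|$ increases). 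The paper instead shows $d\hat\lambda\wedge d(t-x\cdot\omega)=0$, forcing $\hat\lambda$ to depend only on $t-x\cdot\omega$ outside the cylinder, whence $\hat\lambda\equiv 1$ by its value on $\{|x'|\ge 1\}$; this avoids tracking the geodesic--cylinder intersection pattern.

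For the representation $U_{\omega,s}=u_{\omega,s}H(\varphi_\omega-s)$ you take a genuinely different route from the paper. The paper (Lemma~\ref{lem_pw_conormal}) runs the full progressing-wave expansion $U^{(N)}=\sum_{j=0}^N u_jH_j(\varphi_\omega-s)$ with $H_j(r)=r^jH(r)$: each $u_j$ solves a transport ODE along the null generators (well-posed precisely because $\Phi$ is a diffeomorphism), and the remainder $U-U^{(N)}$ is handled by the ordinary \emph{spacelike} forward Cauchy problem (Proposition~\ref{prop_wp}) with source in $H^N_{\mathrm{loc}}$. This yields $U|_{\{\varphi_\omega\ge s\}}\in H^N_{\mathrm{loc}}$ for every $N$, hence smoothness, and stays entirely within the well-posedness theory the paper already has. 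Your approach---one transport equation on $\Sigma_s$ followed by the characteristic Cauchy (Goursat) problem $\Box_g u=0$ in $\{\varphi_\omega>s\}$---is more direct and conceptually cleaner, but it imports smooth well-posedness of the Goursat problem on a null hypersurface together with a domain-of-dependence statement (to get $u_{\omega,s}\equiv 1$ in the far past from its boundary trace there). These are available in the literature (H\"ormander 1990, Rendall, B\"ar--Tagne Wafo) but are substantially less elementary than the spacelike Cauchy theory; you rightly flag this as the main obstacle. The trade-off: the paper's argument is self-contained but requires the $N$-term hierarchy; yours is a one-step construction but leans on a heavier black box.
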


The final step in the proof is to use the solutions $\varphi_{\omega}$ of the eikonal equation and the unique continuation property \eqref{metric_a3} to construct a diffeomorphism $F: \mR^{1+n} \to \mR^{1+n}$ such that $F^* g = \gm$ and $F = \mathrm{id}$ outside $\mR \times \ol{B}$. The assumption in Theorem \ref{thm_rigidity} is $\mathcal{F}_{\omega}(g) = \mathcal{F}_{\omega}(\gm)$ for $\omega$ in the finite set \eqref{finite_set_def}. We define the functions 
\begin{align*}
\sigma &= \frac{1}{2} (\varphi_{e_1} + \varphi_{-e_1}), \\
\alpha_j &= \frac{1}{2} (\varphi_{-e_j} - \varphi_{e_j}),
\end{align*}
where $1 \leq j \leq n$. The diffeomorphism $F$ will be given by 
\[
F(t,x) = (\sigma(t,x), \alpha_1(t,x), \ldots, \alpha_n(t,x)).
\]
Since $\varphi_{\omega} = t - x \cdot \omega$ outside $\mR \times \ol{B}$, one has $\sigma = t$ and $\alpha_j = x_j$ outside $\mR \times \ol{B}$. Thus $\sigma$ could be considered as a new time coordinate, and $\alpha_j$ as an analogue of the $x_j$ coordinate.

It remains to show that $F$ is indeed a diffeomorphism and satisfies $F^* g = \gm$. The first key observation is that the plane waves $U_{\omega,s} = u_{\omega,s} H(\varphi_{\omega}-s)$ must satisfy $u_{\omega,s} \equiv 1$ in $\{ \varphi_{\omega} \geq s \}$. In fact, since any constant function vanishes under $\Box_g$, the function $v = u_{\omega,s}-1$ solves the equation 
\[
\Box_g v = 0 \text{ in $\{ \varphi_{\omega} > s \} \cap (\mR \times B)$}.
\]
The assumption $\mathcal{F}_{\omega}(g) = \mathcal{F}_{\omega}(\gm)$ implies that $v$ has zero Cauchy data on $\{ \varphi_{\omega} > s \} \cap (\mR \times \p B)$. We can now invoke the unique continuation property \eqref{metric_a3}, at some large enough time, and uniqueness for the backward Cauchy problem to show that $v \equiv 0$ and therefore $u_{\omega,s} \equiv 1$ in $\{ \varphi_{\omega} \geq s \}$.

Now inserting $U_{\omega,s} = H(\varphi_{\omega}-s)$ in the equation $\Box_g U_{\omega,s} = 0$ yields that 
\[
\Box_g \varphi_{\omega} = 0 \text{ on $\{ \varphi_{\omega} = s \}$}.
\]
Since we have access to boundary measurements of plane waves for any time delay $s$, the above holds for any $s$. Consequently $\varphi_{\omega}$ solves the wave equation $\Box_g \varphi_{\omega} = 0$ in addition to solving the eikonal equation. This imposes very strong constraints on the functions $\varphi_{\omega}$. In particular, by the unique continuation property again, for any $\omega \in \Omega$ one obtains the global representation 
\[
\varphi_{\omega} = \sigma - \sum_{j=1}^n \omega_j \alpha_j.
\]
This generalizes the fact that for $\gm$ one has $\varphi_{\omega} = t - x \cdot \omega$ everywhere. The previous property is sufficient for proving that $F^* g = \kappa \gm$ for some positive smooth function $\kappa$. In particular $F$ is a local diffeomorphism. The Hadamard global inverse function theorem, together with some topological properties of $\varphi_{\omega}$, implies that $F$ is a diffeomorphism. An additional argument yields $\kappa \equiv 1$, which concludes the proof of Theorem \ref{thm_rigidity}.

\subsection*{Organization}

This article is organized as follows. Section \ref{sec_intro} is the introduction. In Section \ref{sec_lorentzian} we collect some facts from Lorentzian geometry and discuss certain properties of Cauchy temporal functions. Section \ref{sec_geometric} contains geometric arguments related to the diffeomorphism property and single plane wave scattering relation and gives the proof of Theorem \ref{thm_geometric}. Section \ref{sec_eikonal} relates the diffeomorphism property appearing in Theorem \ref{thm_geometric} with solutions of eikonal equations. Section \ref{sec_pw} establishes the basic properties of plane wave solutions. Our main results for inverse problems are then proved in Section \ref{sec_rigidity_proofs}. Appendix \ref{sec_app} contains certain auxiliary arguments.

\subsection*{Acknowledgements}

L.O.\ was supported by the European Research Council of the European Union, grant 101086697 (LoCal). L.O.\ and M.S.\ were partly supported by the Research Council of Finland, grants 353091 and 353096  (Centre of Excellence in Inverse Modelling and Imaging), 359182 and 359208 (FAME Flagship) as well as 347715. Rakesh’s work was partly funded by grants DMS 1908391 and DMS 2307800 from the National Science Foundation of USA. Views and opinions expressed are those of the authors only and do not necessarily reflect those of the European Union or the other funding organizations. Neither the European Union nor the other funding organizations can be held responsible for them.


\section{Lorentzian geometry} \label{sec_lorentzian}

\subsection{Globally hyperbolic spacetimes}

For basic  facts in Lorentzian geometry see \cite{oneill1983, ringstrom2009}. Let $(M,g)$ be a connected Lorentzian manifold of dimension $1+n$ with signature $(-,+,\dots,+)$. 
We recall that a vector $v \in T_p M \setminus \{0\}$, $p \in M$, is timelike, spacelike or null, if respectively 
    \begin{align}
g(v,v) < 0, \quad g(v,v) > 0, \quad g(v,v) = 0.
    \end{align}
A vector $v$ is said to be causal if it is timelike or null, that is, \ $g(v,v) \leq 0$.
We will assume that $(M,g)$ is time-oriented, that is, there is a vector field $Z$ on $M$ such that $Z(p)$ is timelike for all $p \in M$.
A causal vector $v \in T_p M$, $p \in M$, is future-directed if 
$g(v, Z) < 0$ and past-directed if the opposite inequality holds.

A submanifold $S$ of $M$ is called spacelike if all the non-zero tangent vectors of $S$ are spacelike; this is 
equivalent to the condition that the restriction of $g$ on $S$ is a Riemannian metric on $S$. 

We recall a basic property (see \cite[Exercise 3 on p.\ 155]{oneill1983}).

\begin{Lemma}  \label{lemma_timecone}
Any two future-directed causal vectors $v, w \in T_p M$ satisfy $g(v,w) \leq 0$ with equality if and only if $v$ is null and $w = \lambda v$ for some $\lambda > 0$.
\end{Lemma}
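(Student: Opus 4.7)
The plan is to reduce the statement to a linear-algebra computation in $T_p M$ equipped with the Minkowski metric. First, I would choose an orthonormal basis $(e_0, e_1, \ldots, e_n)$ of $T_p M$ with $e_0$ future-directed timelike and $g(e_0, e_0) = -1$, $g(e_i, e_i) = 1$ for $i \geq 1$, and expand $v = v^0 e_0 + \vec{v}$, $w = w^0 e_0 + \vec{w}$ where $\vec{v} = \sum_{i \geq 1} v^i e_i$ and similarly for $w$, viewing $\vec{v}, \vec{w}$ as elements of Euclidean $\mR^n$ with inner product $\langle\cdot,\cdot\rangle$ and norm $|\cdot|$. The causality assumption $g(v,v) \le 0$ becomes $(v^0)^2 \geq |\vec{v}|^2$, with equality iff $v$ is null; similarly for $w$. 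In particular $v^0 \ne 0$ and $w^0 \ne 0$, since if $v^0 = 0$ then $g(v,v) = |\vec v|^2 \geq 0$ with equality only for $v=0$, excluded.

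Next I would show that future-directedness forces $v^0, w^0 > 0$. The cleanest way is to note that the set of causal vectors in $T_p M$ has exactly two connected components (each a convex cone), and $v \mapsto v^0$ is continuous and nowhere zero on the set of causal vectors, hence has constant sign on each component. The component containing $e_0$ has $v^0 > 0$, and future-directedness is defined by continuity of the sign of $g(\cdot, Z(p))$ along with $Z(p)$ being future-directed; matching the two component labels (future cone versus the $\{v^0 > 0\}$ cone) identifies them, so both $v, w$ being future-directed gives $v^0, w^0 > 0$.

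The inequality is then immediate: using the above decomposition,
\[
g(v,w) = -v^0 w^0 + \langle \vec v, \vec w\rangle \leq -v^0 w^0 + |\vec v|\, |\vec w| \leq -v^0 w^0 + v^0 w^0 = 0,
\]
by the Euclidean Cauchy--Schwarz inequality applied to $\langle \vec v, \vec w\rangle$ and the causality estimates $|\vec v| \leq v^0$, $|\vec w| \leq w^0$. Equality throughout requires simultaneously $\langle \vec v, \vec w\rangle = |\vec v|\,|\vec w|$ (so $\vec v$ and $\vec w$ are nonnegative scalar multiples of each other) and $|\vec v| = v^0$, $|\vec w| = w^0$ (so both $v$ and $w$ are null). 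Since $v, w$ are nonzero and $v^0, w^0 > 0$, neither $\vec v$ nor $\vec w$ vanishes, so $\vec w = \mu \vec v$ for some $\mu > 0$; combined with $v^0 = |\vec v|$ and $w^0 = |\vec w| = \mu |\vec v|$, this yields $w^0 = \mu v^0$ and hence $w = \mu v$ with $\lambda := \mu > 0$. The converse is trivial: if $v$ is null and $w = \lambda v$, then $g(v,w) = \lambda g(v,v) = 0$.

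The only nontrivial obstacle is the sign determination $v^0, w^0 > 0$ from future-directedness, which hinges on the fact that the two connected components of the causal cone in $T_p M$ are precisely the two half-cones $\{v^0 > 0\}$ and $\{v^0 < 0\}$; everything else is a direct application of the Euclidean Cauchy--Schwarz inequality in the orthogonal complement of $e_0$.
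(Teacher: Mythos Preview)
Your argument is correct and is the standard one. The paper itself does not prove this lemma; it simply cites \cite[Exercise~3 on p.~155]{oneill1983}, so there is no ``paper's proof'' to compare against beyond the fact that your orthonormal-basis reduction and Cauchy--Schwarz estimate are exactly how that exercise is meant to be solved.

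One minor simplification you could make: the sign determination $v^0,w^0>0$ becomes immediate (and avoids the connected-components discussion) if you take $e_0$ to be the unit normalization of the time-orientation vector $Z(p)$ itself. Then $v^0 = -g(v,e_0)$ is a positive multiple of $-g(v,Z(p))$, which is positive by the very definition of ``future-directed'' in the paper. Your component argument is fine, but it implicitly uses that $g(v,Z)\neq 0$ for causal nonzero $v$ and timelike $Z$, which is most cleanly seen by the same basis trick anyway.

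Also, in the equality analysis you pass from $|\vec v|\,|\vec w| = v^0 w^0$ directly to $|\vec v|=v^0$ and $|\vec w|=w^0$. This is correct (since $0\le |\vec v|\le v^0$, $0\le |\vec w|\le w^0$, and $v^0 w^0>0$ force both ratios to equal~$1$), but it is worth one sentence, as it is the only place a reader might pause.
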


The metric $g$ induces isomorphisms $T_p M \to T_p^* M$, $v \mapsto v^{\flat}$ and $T_p^* M \to T_p M$, $\xi \mapsto \xi^{\sharp}$ via 
\[
v^{\flat}(w) = g(v,w), \qquad \xi(w) = g(\xi^{\sharp}, w).
\]
The inner product of $\xi, \eta \in T_p^* M$ is defined via $g(\xi,\eta) = g(\xi^{\sharp}, \eta^{\sharp})$. We say that $\xi \in T_p^* M$ is timelike if $\xi^{\sharp}$ is, and that $\xi$ is future-directed if $\xi^{\sharp}$ is past-directed. We also write $\mathrm{grad}_g(f) = df^{\sharp}$.

A smooth curve $\gamma : [a,b] \to M$ is timelike if its tangent vector $\dot \gamma(s)$ is timelike for all $s \in [a,b]$. Spacelike, null, causal, as well as future and past-directed curves are defined analogously. These definitions can be extended straightforwardly to piecewise smooth curves. However, a causal (or timelike or null) curve is always required to be either future or past-directed, that is, the tangent vector cannot switch causal cones at the breaking points.

For a pair of points $p, q \in M$ we write $p \le q$ if $p = q$ or there is a future-directed causal curve from $p$ to $q$.
The causal future of a set $S \subset M$ is defined by
    \begin{align}
J^+(S) = \{ q \in M \,:\, \text{there is $p \in S$ satisfying $p \le q$}\}.
    \end{align} 
The past $J^-(S)$ is defined analogously, with the inequality reversed. 
We write $J^\pm(p) = J^\pm(\{p\})$ for points $p \in M$.

\begin{Definition} \label{def_gh}
The manifold $(M,g)$ is \emph{globally hyperbolic}, see \cite{hounnonkpe2019}, if 
\begin{itemize}
\item[(i)] there is no closed causal curve, and 
\item[(ii)] the causal diamond $J^+(p) \cap J^-(q)$ is compact for all $p, q  \in M$.
\end{itemize}
\end{Definition}

If $M$ is non-compact and $1+n \geq 3$, then condition (i) follows from (ii) \cite{hounnonkpe2019}.

A (topological) hypersurface $\Sigma \subset M$ is called a \emph{Cauchy surface} if every inextendible timelike curve meets $\Sigma$ exactly once. We quote a fundamental result.

\begin{Proposition} \label{prop_gh}
The following are equivalent conditions for $(M,g)$.
\begin{enumerate}
\item[(a)]
$(M,g)$ is globally hyperbolic.
\item[(b)] 
There is a smooth spacelike Cauchy surface.
\item[(c)]
There is a \emph{Cauchy temporal function}, that is, a smooth function $\tau: M \to \mR$ that satisfies $g(d\tau, d\tau) < 0$, $d\tau$ is future-directed, and all level sets $\Sigma_r = \tau^{-1}(r)$ are smooth spacelike Cauchy surfaces.
\end{enumerate}
The function $\tau$ in \emph{(c)} can be chosen so that it is surjective, and its restriction to any inextendible future-directed causal curve is strictly increasing and surjective onto $\mR$. 
Moreover, each $\Sigma_r$ is connected, and there is a diffeomorphism $F: \mR \times \Sigma_0 \to M$ obtained by flowing $\Sigma_0$ along $-\mathrm{grad}_g(\tau)$ such that 
    \begin{align}\label{gh_split}
(F^* g)(r,x) = c(r,x)(-dr^2 + h_r(x))
    \end{align}
where $c > 0$ is smooth, $h_r(x)$ is a Riemannian metric on $\Sigma_0$ smoothly depending on $r$, and $r = \tau$. The map $F$ induces a diffeomorphism between $\Sigma_r$ and $\Sigma_{r'}$ for any $r, r' \in \mR$.
\end{Proposition}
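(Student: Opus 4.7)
The plan is to reduce to the celebrated results of Geroch and Bernal–Sánchez for the equivalence of (a), (b), (c), and then construct the splitting by flowing along a vector field proportional to $-\mathrm{grad}_g(\tau)$. The three implications split into an easy part and a deep part. The implication (c) $\Rightarrow$ (b) is immediate since every level set $\Sigma_r$ is a smooth spacelike Cauchy surface by definition. For (b) $\Rightarrow$ (a), a smooth spacelike Cauchy surface $\Sigma$ separates $M$ into a past and a future; a closed causal curve would have to meet $\Sigma$ in two points, which contradicts the Cauchy property applied to small timelike perturbations (together with a density argument of causal curves by timelike ones), and compactness of causal diamonds follows from the standard limit curve lemma together with the fact that any inextendible causal curve in $J^+(p)\cap J^-(q)$ is trapped in a region foliated by the Cauchy slices. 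The main work is in (a) $\Rightarrow$ (c), for which I would invoke the Bernal–Sánchez smoothing theorem: given a globally hyperbolic spacetime there exists a smooth temporal function whose level sets are spacelike Cauchy surfaces.

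For the additional properties of $\tau$, the monotonicity along future-directed causal curves follows from Lemma \ref{lemma_timecone}. Indeed, if $\gamma$ is future-directed causal, then $(d\tau)^\sharp = \mathrm{grad}_g(\tau)$ is past-directed timelike (since $d\tau$ is future-directed), so $-\mathrm{grad}_g(\tau)$ and $\dot\gamma$ are both future-directed causal and one is timelike, giving $d\tau(\dot\gamma) = -g(-\mathrm{grad}_g(\tau),\dot\gamma) > 0$ strictly. For surjectivity onto $\mathbb{R}$, note that $\tau(M)$ is open and connected, hence an interval $(a,b) \subset \mathbb{R}$; replacing $\tau$ by $\chi\circ\tau$ where $\chi:(a,b)\to\mathbb{R}$ is a smooth increasing bijection preserves all properties of a Cauchy temporal function. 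That the restriction to an inextendible causal curve is surjective onto $\mathbb{R}$ is then equivalent to the Cauchy property: every level $\Sigma_r$ must be met. Connectedness of each $\Sigma_r$ is a standard consequence of connectedness of $M$ together with the Cauchy property, using that $M\setminus\Sigma_r$ splits into a future and a past component.

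For the splitting \eqref{gh_split}, define the future-directed timelike vector field $X = -\mathrm{grad}_g(\tau)/g(d\tau,d\tau)$, which satisfies $X\tau \equiv 1$ and is everywhere orthogonal (in the $g$-sense) to the level sets $\Sigma_r$. Denote its flow by $\phi_s$; an inextendible integral curve $s\mapsto \phi_s(x)$ is an inextendible timelike curve, so by the Cauchy property it meets every $\Sigma_r$, giving that the flow is complete and defines a diffeomorphism
\[
F : \mathbb{R} \times \Sigma_0 \to M, \qquad F(r,x) = \phi_r(x),
\]
with $\tau\circ F(r,x)=r$. Pulling $g$ back along $F$ and using that $X$ is $g$-orthogonal to the slices $\Sigma_r$ yields an expression without $dr\otimes dx^i$ cross terms, of the form $F^*g = -\beta(r,x)^2\,dr^2 + \tilde h_r(x)$ with $\tilde h_r$ a smoothly varying Riemannian metric on $\Sigma_0$; setting $c = \beta^2$ and $h_r = \beta^{-2}\tilde h_r$ gives the claimed conformal form. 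The diffeomorphism $F$ identifies each $\Sigma_r$ with $\{r\}\times\Sigma_0$ and thus with $\Sigma_{r'}$.

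The main obstacle is (a) $\Rightarrow$ (c), which is a nontrivial smoothing theorem; my proposal is to cite it rather than reprove it. A minor technical point worth attention is verifying that the rescaling $\chi\circ\tau$ to arrange surjectivity onto $\mathbb{R}$ preserves the Cauchy property of all level sets, and that the flow of $X$ is indeed complete --- both of which reduce to the Cauchy-surface characterization already established.
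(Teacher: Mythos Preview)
Your proposal is correct and follows essentially the same route as the paper: the paper's proof simply cites Bernal--S\'anchez \cite{bernal2003,bernal2005} for the equivalence of (a)--(c) and for the splitting diffeomorphism $F$, cites \cite[Theorem 11.18]{ringstrom2009} for the behavior of $\tau$ along causal curves, and adds only a short argument for connectedness of $\Sigma_0$ using $F^{-1}$. Your sketch is more explicit than the paper on the easy implications and on the construction of $F$ via the normalized flow $X=-\mathrm{grad}_g(\tau)/g(d\tau,d\tau)$ (which is indeed what is needed to get $\tau\circ F(r,x)=r$), but the substance---deferring (a)$\Rightarrow$(c) to Bernal--S\'anchez---is identical.
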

\begin{proof}
The equivalence of (a)--(c) was established in \cite{bernal2003, bernal2005} (see also the survey \cite{sanchez2022} or \cite[Chapter 11]{ringstrom2009}). The properties of $\tau$ on causal curves are stated in \cite[Theorem 11.18]{ringstrom2009}, and the diffeomorphism $F$ is given in \cite{bernal2005}. We remark that the connectedness of $\Sigma_0$ follows from the connectedness of $M$: if $x_1, x_2 \in \Sigma_0$, then there is a continuous curve $\gamma(s)$ joining $F(0,x_1)$ and $F(0,x_2)$ in $\mR^{1+n}$, and $F^{-1}(\gamma(s)) = (r(s), x(s))$ where $x(s)$ is a continuous curve in $\Sigma_0$ joining $x_1$ and $x_2$.
\end{proof}

From now on, $\tau$ will denote a Cauchy temporal function that satisfies all the properties in Proposition \ref{prop_gh}. In particular, the Cauchy surfaces $\Sigma_r$ will be smooth, connected and spacelike.

\begin{Example} \label{ex_gh}
Let $g = c(t,x)(-dt^2 + h_t)$ where $c$ is a smooth positive function in $\mR^{1+n}$ and $h_t$ is a Riemannian metric on $\mR^n$ depending smoothly on $t$ such that, for some $\lambda > 0$, 
\[
h_t(v,v) \geq \lambda |v|_{\mathrm{Eucl}}^2
\]
uniformly over $(t,x) \in \mR^{1+n}$ and $v \in \mR^n$. The last condition means that the time cones for $g$ cannot become arbitrarily wide. We claim that $(\mR^{1+n}, g)$ is globally hyperbolic. (Without the lower bound on $h_t$ this may fail, see \cite[Section 6.2]{sanchez2022}.) To see this, let $\gamma$ be a smooth inextendible timelike curve, so in particular $\dot{\gamma}(s) \neq 0$ for all $s$ (the piecewise smooth case is analogous). By a reparametrization we may arrange that $|\dot{\gamma}(s)|_{\mathrm{Eucl}} \equiv 1$, and since $\gamma$ is inextendible it must be defined for all $s \in \mR$. We may assume that $\gamma$ is future-directed (the past-directed case is analogous). Since $\gamma(s) = (\gamma_0(s), \eta(s)) \in \mR \times \mR^n$ is timelike, we have 
\[
\dot{\gamma}_0(s)^2 > h_t(\dot{\eta}(s), \dot{\eta}(s)) \geq \lambda |\dot{\eta}(s)|_{\mathrm{Eucl}}^2 = \lambda(1-\dot{\gamma}_0(s)^2).
\]
This gives 
\[
\dot{\gamma}_0(s) > \sqrt{\frac{\lambda}{1+\lambda}}.
\]
It follows that $s \mapsto \gamma_0(s)$ is bijective $\mR \to \mR$ and $\gamma(s)$ intersects each $\Sigma_r = \{r\} \times \mR^n$ exactly once. Thus any $\Sigma_r$ is a Cauchy surface, and $(\mR^{1+n},g)$ is globally hyperbolic.
\end{Example}

Global hyperbolicity ensures that the Cauchy problem for the wave equation is well-posed and one has finite propagation speed.

\begin{Proposition} \label{prop_wp}
Let $(M,g)$ be globally hyperbolic, let $\tau$ be a Cauchy temporal function and let $\tau_0, s \in \mR$. For any $f \in H^s_{\mathrm{loc}}(M)$ with $\supp(f) \subset \{ \tau \geq \tau_0 \}$, there is a unique $u \in H^{s+1}_{\mathrm{loc}}(M)$ such that 
\[
\Box_g u = f \text{ in $M$}, \qquad u|_{\{ \tau < \tau_0 \}} = 0.
\]
One has finite propagation speed, meaning that 
\[
\supp(u) \subset J^+(\supp(f)).
\]
Similarly, if $\supp(f) \subset \{ \tau \leq \tau_0 \}$, there is a unique $v \in H^{s+1}_{\mathrm{loc}}(M)$ such that 
\[
\Box_g v = f \text{ in $M$}, \qquad v|_{\{ \tau > \tau_0 \}} = 0,
\]
and one has $\supp(v) \subset J^{-}(\supp(f))$.
\end{Proposition}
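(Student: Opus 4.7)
The plan is to reduce to the coordinate normal form supplied by Proposition \ref{prop_gh} and then apply standard energy estimates for strictly hyperbolic equations, with global hyperbolicity providing the compactness needed to close them. The backward statement reduces to the forward one by replacing $\tau$ by $-\tau$, so I focus on the forward problem.

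First I would use the diffeomorphism $F:\mR \times \Sigma_0 \to M$ of Proposition \ref{prop_gh} to identify $M$ with $\mR \times \Sigma_0$, with $g = c(r,x)(-dr^2 + h_r(x))$ and $\tau = r$. In these coordinates the principal symbol of $\Box_g$ with respect to the foliation $\{r = \text{const}\}$ is $-c^{-1}(\xi_0^2 - |\xi'|_{h_r}^2)$; this is strictly hyperbolic in $r$ precisely because each $\Sigma_r$ is spacelike, i.e.\ $g(d\tau,d\tau) < 0$.

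The analytic heart is an energy inequality. Given a compact $K \subset M$ and $\tau_1 \geq \tau_0$ with $K \subset \{\tau \leq \tau_1\}$, the set
\[
\Omega_K := J^-(K) \cap \{\tau_0 \leq \tau \leq \tau_1\}
\]
is compact: global hyperbolicity gives compact causal diamonds, and $\Sigma_{\tau_0}$ being a Cauchy surface prevents $J^-(K)$ from escaping to infinity below $\tau_0$. Contracting $\Box_g u \cdot \bar u$ against the future-directed timelike multiplier $X = -\mathrm{grad}_g(\tau)$ (adapted by extra derivatives or mollification to handle general $s \in \mR$) and integrating over $\Omega_K$ yields a divergence identity. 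By Lemma \ref{lemma_timecone}, the resulting flux through the lateral causal boundary $\p \Omega_K \cap \{\tau_0 < \tau < \tau_1\}$, whose conormals are past-directed causal, has a definite sign and can be discarded. The flux at $\tau = \tau_0$ vanishes by the initial condition. This produces the estimate
\[
\|u\|_{H^{s+1}(K)} \lesssim \|f\|_{H^{s}(\Omega_K)}.
\]
Uniqueness in $H^{s+1}_{\mathrm{loc}}$ and the support statement $\supp(u) \subset J^+(\supp f)$ follow immediately: if $p \notin J^+(\supp f)$ then $\Omega_{\{p\}}$ is disjoint from $\supp f$, so $u$ vanishes near $p$.

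For existence, in each slab $S_{\tau_1} = \{\tau_0 \leq \tau \leq \tau_1\}$ the problem is a standard strictly hyperbolic Cauchy problem with zero data on $\Sigma_{\tau_0}$. Using the coordinate form and classical theory (e.g.\ \cite[Theorem 23.2.2]{hormander}) one obtains local $H^{s+1}$ solutions in coordinate charts that cover $\Omega_K$ for each compact $K \subset S_{\tau_1}$; the energy estimate glues these into a solution $u \in H^{s+1}_{\mathrm{loc}}(S_{\tau_1})$, and the uniqueness from the previous step makes the family $\{u|_{S_{\tau_1}}\}_{\tau_1}$ consistent, producing a global $u \in H^{s+1}_{\mathrm{loc}}(M)$. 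The main obstacle, and the only place where the assumption on $g$ is essential, is closing the energy inequality on a non-compact manifold; global hyperbolicity is exactly what makes $\Omega_K$ compact and the lateral flux term have the right sign, after which every remaining step is routine.
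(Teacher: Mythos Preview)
Your plan is sound and would work, but it takes a genuinely different route from the paper. The paper's proof is essentially a two-line reduction to the literature plus a short microlocal argument: existence of a distributional forward solution with $\supp(u)\subset J^+(\supp f)$ and uniqueness are imported directly from \cite{bar2015,barginouxpfaffle2007}, and the $H^{s+1}_{\mathrm{loc}}$ regularity is then obtained by propagation of singularities. Concretely, elliptic regularity gives $u$ microlocally in $H^{s+2}$ off the characteristic set, and along the characteristic set every null bicharacteristic projects (via Lemma~\ref{lemma_bichar_geodesic}) to a null geodesic, hence a causal curve, which must cross the Cauchy surface $\{\tau=\tau_0\}$ into the region where $u$ vanishes; propagation of $H^{s+1}$ regularity then closes the argument.

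By contrast, you build everything from scratch via energy estimates on the compact lens $\Omega_K=J^-(K)\cap\{\tau_0\le\tau\le\tau_1\}$. This is more self-contained and avoids microlocal analysis entirely, at the cost of having to handle arbitrary $s\in\mR$ by hand: the phrase ``adapted by extra derivatives or mollification'' hides real work (commutators for $s\in\mZ_{\ge0}$, duality for $s<0$, interpolation otherwise), and the lateral boundary $\partial J^-(K)$ is only Lipschitz, so the divergence-theorem step needs a bit of care. The paper's microlocal route handles all $s$ uniformly with no extra effort once the distributional solution exists, which is why the authors chose it. Both approaches use global hyperbolicity in the same essential way---yours for the compactness of $\Omega_K$, theirs implicitly (through the cited existence theory) and for the fact that null geodesics escape past $\Sigma_{\tau_0}$.
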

\begin{proof}
For a distribution $f$ with $\supp(f) \subset \{ \tau \geq \tau_0 \}$ the existence of a distributional solution $u$ with $\supp(u) \subset J^+(\supp(f))$ is given in \cite[Lemma 4.1]{bar2015} (for the smooth case see \cite[Corollary 3.4.3]{barginouxpfaffle2007}). Uniqueness follows from \cite[Theorem 3.1.1]{barginouxpfaffle2007}.

For regularity, if  $f \in H^s_{\mathrm{loc}}(M)$ and $u$ is the distributional solution given above, then $u$ is microlocally $H^{s+2}$ away from the characteristic set of $\Box_g$ by \cite[Theorem 18.1.31]{hormander}. Moreover, by propagation of $H^{s+1}$ singularities \cite[Theorem 26.1.4]{hormander} we see that $u$ is microlocally $H^{s+1}$ along every null bicharacteristic that reaches the set $\{ \tau < \tau_0 \}$ where $u$ vanishes. However, the projection of a null bicharacteristic is a null geodesic by Lemma \ref{lemma_bichar_geodesic}, hence it is a causal curve that meets the Cauchy surface $\{ \tau = \tau_0 \}$ exactly once. Thus every null bicharacteristic reaches $\{ \tau < \tau_0 \}$. It follows that $u$ is microlocally $H^{s+1}$ everywhere in $T^* M \setminus 0$, which gives that $u$ is in $H^{s+1}_{\mathrm{loc}}(M)$ by \cite[Theorem 18.1.31]{hormander}.
\end{proof}

\subsection{Spacetimes that are Minkowski outside a cylinder}

We now consider globally hyperbolic spacetimes $(\mR^{1+n},g)$ that satisfy $g = \gm$ outside $\mR \times \ol{B}$. The next result shows that this condition forces certain relations between $t$ and $\tau$.

\begin{Proposition} \label{prop_tau_proper}
Let $g$ satisfy \eqref{metric_a1}--\eqref{metric_a2}, let $\tau$ be a Cauchy temporal function and let $(t_j, x_j)$ be a sequence in $\mR \times \ol{B}$.
\begin{enumerate}
\item[(a)] 
If $\tau(t_j,x_j) \to \pm \infty$, then $t_j \to \pm \infty$. Moreover, for any $t_0 \in \mR$ there are $r_{\pm} \in \mR$ such that 
\begin{align*}
\{ \tau \leq r_{-} \} &\subset \{ (t,x) \in \mR^{1+n} \,:\, t < t_0 + \max( |x| -1, 0 ) \}, \\
\{ \tau \geq r_{+} \} &\subset \{ (t,x) \in \mR^{1+n} \,:\, t > t_0 - \max( |x| -1, 0 ) \}.
\end{align*}
\item[(b)] 
If $t_j \to \pm \infty$, then $\tau(t_j,x_j) \to \pm \infty$. In particular $\tau|_{\mR \times \ol{B}}$ is a proper map.
\end{enumerate}
\end{Proposition}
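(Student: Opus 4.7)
The plan is to prove part (b) first, which contains the main technical work, and then to deduce part (a) by combining (b) with explicit Minkowski-region constructions. The basic tool throughout is Proposition \ref{prop_gh}: $\tau$ is strictly increasing along every future-directed $g$-causal curve, and is surjective onto $\mR$ along any inextendible such curve.

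For part (b) I would first establish that $\tau|_{\mR \times \p B}$ is proper. For each $y \in \p B$, assumption \eqref{metric_a1} gives $g = \gm$ on $\mR \times \p B$, so the vertical curve $s \mapsto (s,y)$ is inextendible and future-directed $g$-timelike; hence $\tau(\cdot,y) : \mR \to \mR$ is a strictly increasing smooth bijection. Combined with smoothness of $\tau$, compactness of $\p B$, and a finite-intersection argument on the decreasing family of closed sets $\{y \in \p B : \tau(T,y) \leq R\}$ (whose intersection over $T \in \mR$ is empty since $\tau(T,y) \to +\infty$ as $T \to +\infty$ for each fixed $y$), I conclude that for every $R$ there is some $T(R)$ with $\tau(s,y) > R$ for all $(s,y) \in [T(R),\infty) \times \p B$; the past direction is analogous. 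The second step, which is the main obstacle, is to transfer this bound to the interior: for each $R$ one seeks $T'$ with $[T',\infty) \times \ol{B} \subset J^+(\{T(R)\} \times \p B)$, so that the lower bound $R$ on $\tau|_{\{T(R)\} \times \p B}$ propagates inward. For $(t,x) \in \mR \times \ol{B}$ with $|x|$ close to $1$, a direct straight-line construction from a suitable $(T(R), y_0) \in \{T(R)\} \times \p B$ gives a $\gm$-timelike future-directed curve staying in the Minkowski region, hence $g$-causal; for $|x|$ small, the straight line enters the cylinder interior where $g \ne \gm$, and one must invoke global hyperbolicity---closedness of $J^+$ for compact sets together with compactness of causal diamonds---to extend the coverage.

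Part (a) is then deduced from (b) and a Minkowski-region computation. Set $K^c = \{(t,x) : t \geq t_0 + \max(|x|-1, 0)\}$. For $(t,x) \in K^c$ with $|x| > 1$, the straight line from $(t_0 - 1, 2x/|x|)$ to $(t,x)$ has tangent $(t - t_0 + 1,\, x(|x|-2)/|x|)$, whose $\gm$-norm squared $-(t - t_0 + 1)^2 + (|x| - 2)^2$ is $\leq 0$ because $t - t_0 + 1 \geq |x| \geq ||x| - 2|$; along the line the spatial norm equals $|2 + \lambda(|x| - 2)|$ for $\lambda \in [0,1]$, which is $\geq 1$ whenever $|x| \geq 1$. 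Thus the line stays in $\{|x| \geq 1\}$ and is a future-directed $g$-causal curve, giving $\tau(t,x) \geq \tau(t_0 - 1, 2x/|x|) \geq \min_{|y|=2} \tau(t_0 - 1, y) =: r_-^{\mathrm{out}}$, finite by continuity of $\tau$ on the compact sphere of radius $2$. For $(t,x) \in K^c$ with $|x| \leq 1$, i.e.\ $t \geq t_0$, part (b) implies that $\tau$ is bounded below on $[t_0, \infty) \times \ol{B}$: a sequence with $\tau \to -\infty$ could not have $t_j$ bounded (else $\tau$ would be bounded by continuity on a compact set) nor $t_j \to +\infty$ (else part (b) would give $\tau \to +\infty$), and $t_j \to -\infty$ is excluded by $t_j \geq t_0$. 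This yields a finite lower bound $r_-^{\mathrm{in}}$, and any $r_- < \min(r_-^{\mathrm{out}}, r_-^{\mathrm{in}})$ satisfies the first inclusion; the $r_+$ bound is obtained symmetrically by running the same straight-line construction forward to $(t_0 + 1, 2x/|x|)$.
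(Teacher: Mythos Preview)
Your argument for part (a), \emph{assuming} (b), is correct; the straight-line construction from $(t_0-1,2x/|x|)$ in the Minkowski region is a clean way to handle $|x|>1$, and the appeal to (b) for $|x|\le 1$ is sound. The problem is that your proof of (b) has a genuine gap at exactly the step you flag as ``the main obstacle'': transferring the lower bound on $\tau$ from $\{T(R)\}\times\p B$ into the interior of the cylinder. You write that for small $|x|$ ``one must invoke global hyperbolicity---closedness of $J^+$ for compact sets together with compactness of causal diamonds---to extend the coverage,'' but neither of those properties yields what you need. Closedness of $J^+(K)$ only says that limits of points already known to be in the causal future of $K$ remain there; compactness of $J^+(p)\cap J^-(q)$ bounds the size of causal diamonds. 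Neither produces a future-directed causal curve from $\{T(R)\}\times\p B$ to an arbitrary interior point $(t,x)$ with $t$ large. Inside $\mR\times B$ the metric is completely unconstrained by \eqref{metric_a1}, so there is no a priori reason such a curve should exist; in particular you have not ruled out that $\Sigma_R$ extends to arbitrarily large $t$ inside the cylinder.

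The paper's ordering---(a) first, then (b)---is not incidental. Its proof of (a) uses only the connectedness of the Cauchy surfaces $\Sigma_r$ together with the explicit description of $\Sigma_r\cap\{|x|\ge 1\}$ as a graph (Lemma~\ref{lemma_taur_ext}): if $\tau(t_j,x_j)\to-\infty$ but $t_j\ge t_0$, connectedness of $\Sigma_{r_j}$ forces $\Sigma_{r_j}$ to pass through a point $(t_0,y_j)$ with $|y_j|\le 1$, and a convergent subsequence gives a contradiction. Part (b) then goes through Lemma~\ref{lemma_timelike_intersection} (every inextendible timelike curve meets each slice $\{t=t_0\}$), whose proof uses (a), and that lemma is what allows one to show $\Sigma_r\cap(\mR\times\ol B)$ is compact. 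Your strategy of deducing (a) from (b) would be viable if you could give an independent proof of (b), but the sketch you offer does not do this; if you wish to keep your order you would need to supply a direct argument that $\Sigma_R\cap(\mR\times\ol B)$ is bounded in $t$ without appealing to (a), and I do not see a shortcut.
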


We first prove part (a), which will be used frequently in this article. Part (b) will be used to show that the map $\Lambda_g^{\mathrm{Hyp}}$ in Corollary \ref{cor_dnmap} is well defined.

The next lemma describes the set $\Sigma_r =\tau^{-1}(r)$ outside $\mR^n \times \ol{B}$.

\begin{Lemma} \label{lemma_taur_ext}
Let $g$ satisfy \eqref{metric_a1}--\eqref{metric_a2}. There is a smooth real valued function $\alpha$ such that
\[
\Sigma_r \cap \{ |x| \geq 1 \} = \{ (\alpha(r,x), x) \,:\, |x| \geq 1 \}.
\]
For fixed $x$ with $|x| \geq 1$, the map $r \mapsto \alpha(r,x)$ is strictly increasing and maps onto $\mR$.
\end{Lemma}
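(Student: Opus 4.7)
The plan is to construct $\alpha(r,x)$ by implicitly inverting the map $t \mapsto \tau(t,x)$ for fixed $x$ with $|x| \geq 1$, using the properties of a Cauchy temporal function listed in Proposition \ref{prop_gh}.

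First, I would fix $x$ with $|x| \geq 1$ and consider the curve $c_x : \mR \to \mR^{1+n}$, $c_x(t) = (t,x)$. Its image lies entirely in $\mR^{1+n} \setminus (\mR \times B)$, where $g = \gm$ by \eqref{metric_a1}. Hence $g(\dot c_x, \dot c_x) = \gm(\p_t, \p_t) = -1$, so $c_x$ is timelike; and since $dt$ is chosen to be future-directed outside $\mR \times \ol B$, a direct dualization shows $\p_t$ is future-directed there as well. The curve is defined on all of $\mR$, so it is inextendible. By the last part of Proposition \ref{prop_gh}, the composition $\tau \circ c_x : \mR \to \mR$ is strictly increasing and surjective. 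Consequently, for each $r \in \mR$ there is a unique time $\alpha(r,x) \in \mR$ with $\tau(\alpha(r,x), x) = r$, a point $(t,x)$ with $|x| \geq 1$ lies in $\Sigma_r$ iff $t = \alpha(r,x)$, and both the strict monotonicity and the surjectivity of $r \mapsto \alpha(r,x)$ follow immediately from the corresponding properties of $t \mapsto \tau(t,x)$.

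For smoothness of $\alpha$, I would apply the implicit function theorem to $\tau(t,x) = r$, which requires $\p_t \tau > 0$ on $\{|x| \geq 1\}$. I would verify this by computing in the Minkowski region: since $d\tau$ is future-directed, the vector $(d\tau)^{\sharp_{\gm}} = -\tau_t \p_t + \sum_i \tau_{x_i} \p_{x_i}$ is past-directed, and pairing against the future-directed vector $\p_t$ using $\gm$ gives $\gm((d\tau)^{\sharp}, \p_t) = \tau_t > 0$ (using the sign convention in Section \ref{sec_lorentzian} that past-directed means $g(v,Z) > 0$ for future-directed $Z$). The implicit function theorem then yields smoothness of $\alpha$ in $(r,x)$.

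The main obstacle, to the extent there is one, is simply making sure the sign conventions for future/past-directedness, together with the musical isomorphism $\sharp$, are tracked correctly so that $\p_t \tau > 0$ emerges with the right sign; once this is in place, the rest is a direct unpacking of the properties of a Cauchy temporal function.
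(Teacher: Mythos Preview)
Your proposal is correct and follows essentially the same approach as the paper: both use that $t \mapsto (t,x)$ is an inextendible future-directed timelike curve in the Minkowski region, invoke the Cauchy/temporal-function property to get a unique intersection time $\alpha(r,x)$ with $\Sigma_r$, and apply the implicit function theorem with the key input $\p_t \tau > 0$ for smoothness. The only cosmetic differences are that the paper obtains $\p_r \alpha > 0$ by differentiating the identity $\tau(\alpha(r,x),x)=r$ and surjectivity from $\alpha(\tau(t,x),x)=t$, whereas you read both off directly from the strict monotonicity and surjectivity of $\tau \circ c_x$ furnished by Proposition~\ref{prop_gh}.
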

\begin{proof}
If $|x| \geq 1$, by \eqref{metric_a1} the curve $t \mapsto (t,x)$ is timelike and hence it intersects the Cauchy surface $\Sigma_r$ at a unique time $t = \alpha(r,x)$. Thus $\Sigma_r \cap \{ |x| \geq 1 \} = \{ (\alpha(r,x), x) \,:\, |x| \geq 1 \}$ and 
\begin{equation} \label{tau_alpha_identity}
\tau(\alpha(r,x), x) = r \quad \text{when $|x| \geq 1$}.
\end{equation}
Consider the smooth map $F(t,r,x) = \tau(t, x) - r$. Now $t = \alpha(r,x)$ for $|x| \geq 1$ is the unique time for which $F(t,r,x) = 0$, and $\p_t F = \p_t \tau > 0$  since $t \mapsto (t,x)$ is timelike. The implicit function theorem ensures that there is a smooth local extension of $\alpha$ near any $(r,x)$ with $|x| \geq 1$. Differentiating \eqref{tau_alpha_identity} in $r$ gives that $\p_r \alpha > 0$, and the identity $\alpha(\tau(t,x), x) = t$ shows that $r \mapsto \alpha(r,x)$ maps onto $\mR$.
\end{proof}

\begin{proof}[Proof of Proposition \ref{prop_tau_proper} {\rm (a)}]
We argue by contradiction and suppose that $(t_j, x_j) \in \mR \times \ol{B}$ with $\tau(t_j,x_j) \to -\infty$, but $t_j \not\to - \infty$ (the proof for $+\infty$ is analogous). Then there is $t_0 \in \mR$ so that, after passing to a subsequence, for any $j$ one has $\tau(t_j,x_j) = r_j \leq -j$ and $t_j \geq t_0$.

On the other hand, by Lemma \ref{lemma_taur_ext} 
\begin{equation} \label{sigmar_simple_structure}
\Sigma_r \cap \{ |x| \geq 1 \} = \{ (\alpha(r,x), x) \,:\, |x| \geq 1 \}
\end{equation}
where $\alpha$ is smooth on $\{ |x| \geq 1 \}$ and $r \mapsto \alpha(r,x)$ is strictly increasing and surjective onto $\mR$. Thus for any $x$ with $|x| = 1$, there is $r_x$ such that $\alpha(r,x) \leq t_0-2$ when $r \leq r_x$. By continuity and compactness there is $r_0$ such that $\alpha(r,x) \leq t_0-1$ whenever $|x| = 1$ and $r \leq r_0$.

Now if $j$ is sufficiently large, we have $r_j \leq r_0$ and thus $\Sigma_{r_j} \cap \{ |x| = 1 \} \subset \{ t \leq t_0-1 \}$. On the other hand $(t_j, x_j) \in \Sigma_{r_j}$ and $t_j \geq t_0$. Since $\Sigma_{r_j}$ is connected there is a continuous curve $\gamma(s)$ in $\Sigma_{r_j}$ with $\gamma(0) = (t_j,x_j)$ and $\gamma(1) \in \Sigma_{r_j} \cap \{ |x| = 1 \}$. We can arrange that this curve lies in $\Sigma_{r_j} \cap \{ |x| \leq 1 \}$ simply by replacing it with $\gamma|_{[0,a]}$ where $a$ is the first time when $\gamma$ meets $\{ |x| = 1 \}$. This implies that $\Sigma_{r_j}$ contains a point $(t_0,y_j)$ where $|y_j| \leq 1$. By passing to a subsequence we may assume that $y_j \to y$ where $|y| \leq 1$. But now we have 
\[
\tau(t_0,y_j) = r_j \leq -j, \qquad (t_0,y_j) \to (t_0,y).
\]
Thus $\tau(t_0,y_j)$ would converge both to $-\infty$ and $\tau(t_0,y)$, which is a contradiction.

For the second statement, we first observe that there is $r_-$ so that $\{ \tau \leq r_- \} \cap (\mR \times \ol{B}) \subset \{ t < t_0 \}$ (otherwise one would get a contradiction with the first statement). If $\alpha$ is as above, then for $r \leq r_-$ we have 
\[
\{ \tau = r \} \cap (\mR \times \p B) = \{ (\alpha(r,x), x) \,:\, |x| = 1 \} \subset \{ t < t_0 \}.
\]
Since $g = \gm$ outside $\mR \times \ol{B}$ and since $\{ \tau = r \}$ is spacelike, for $r \leq r_-$ we must have 
\[
\{ \tau = r \} \subset \{ (t,x) \in \mR^{1+n} \,:\, t < t_0 + \max( |x| -1, 0 ) \}.
\]
This proves the second statement.
\end{proof}

To prove Proposition \ref{prop_tau_proper} (b), we state a lemma that is of independent interest. Note that in this lemma $\{ t = t_0 \}$ is not necessarily a Cauchy surface, see Figure \ref{fig_twirl}.

\begin{Lemma} \label{lemma_timelike_intersection}
Let $g$ satisfy \eqref{metric_a1}--\eqref{metric_a2}. Given $t_0 \in \mR$, any inextendible timelike curve meets the set $\{ t = t_0 \}$.
\end{Lemma}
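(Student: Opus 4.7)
I would argue by contradiction. After reversing the parametrization if necessary, assume $\gamma: (a,b) \to \mR^{1+n}$ is future-directed and suppose $\gamma \cap \{t = t_0\} = \emptyset$. Since $\gamma$ is connected and $t \circ \gamma$ is continuous, this forces either $\gamma \subset \{t < t_0\}$ or $\gamma \subset \{t > t_0\}$. I will write up the first case; the second is strictly analogous, using past-inextendibility of $\gamma$ and the $-\infty$ half of Proposition~\ref{prop_tau_proper}(a) in place of the $+\infty$ half.

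So assume $\gamma \subset \{t < t_0\}$. By Proposition~\ref{prop_gh}, $\tau \circ \gamma$ is strictly increasing on $(a,b)$ and surjective onto $\mR$; in particular $\tau(\gamma(s)) \to +\infty$ as $s \to b^-$. I would then split on whether $\gamma$ revisits the cylinder $\mR \times \ol{B}$ at arbitrarily late times.

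In the first subcase there is a sequence $s_j \to b^-$ with $\gamma(s_j) = (t_j,x_j) \in \mR \times \ol{B}$; then $\tau(t_j,x_j) \to +\infty$ while $t_j < t_0$, which contradicts Proposition~\ref{prop_tau_proper}(a) directly. In the second subcase, $\gamma([s_*,b)) \subset \{|x| > 1\}$ for some $s_* \in (a,b)$. On this interval $g = \gm$ and $dt$ is future-directed and timelike, so writing $\gamma(s) = (t(s),x(s))$ gives $\dot t(s) > |\dot x(s)|_{\mathrm{Eucl}}$, and hence
\[
|x(s) - x(s_*)| \leq \int_{s_*}^s |\dot x|\,du < \int_{s_*}^s \dot t\,du = t(s) - t(s_*) < t_0 - t(s_*).
\]
Thus $x(s)$ is bounded, and $t(s)$ lies in $(t(s_*), t_0)$, so $\gamma([s_*,b))$ is contained in a compact subset of $\mR^{1+n}$. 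By continuity, $\tau \circ \gamma$ is bounded on $[s_*,b)$, contradicting $\tau(\gamma(s)) \to +\infty$.

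The only mildly nontrivial step is the second subcase, where I would use both that $g$ agrees with Minkowski outside the cylinder (forcing $t$ to be strictly monotone along $\gamma$ with spatial speed controlled by temporal speed) and global hyperbolicity (which, via Proposition~\ref{prop_gh}, excludes bounded inextendible future-directed timelike curves since they must sweep through every Cauchy surface). I do not expect any serious obstacle beyond assembling these two ingredients.
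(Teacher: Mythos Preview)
Your proof is correct. Both your argument and the paper's rest on the same two pillars—Proposition~\ref{prop_tau_proper}(a) and the fact that $\tau\circ\gamma$ is surjective onto $\mR$—but they are organized differently. The paper treats the case $\gamma\subset\{t>0\}$, invokes the second inclusion in Proposition~\ref{prop_tau_proper}(a) to find a Cauchy surface $\Sigma_r$ lying below the cone $\{t<\max(|x|-1,0)\}$, and observes that the intersection point $\gamma(s_0)=(t_0,x_0)\in\Sigma_r$ must have $|x_0|>1$; it then argues that $J^-(t_0,x_0)\cap\{t>0\}$ is contained in the compact Minkowski past cone $J^-_{\mathrm{Min}}(t_0,x_0)\cap\{t\ge0\}$, contradicting $\tau\to-\infty$. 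Your route instead splits on whether $\gamma$ keeps returning to the cylinder near the forward endpoint: if so, the first assertion of Proposition~\ref{prop_tau_proper}(a) gives an immediate contradiction; if not, your explicit Minkowski estimate $|x(s)-x(s_*)|<t_0-t(s_*)$ traps the tail of $\gamma$ in a compact set. Your argument is a bit more elementary in that it avoids causal-past reasoning and uses only the sequence version of Proposition~\ref{prop_tau_proper}(a); the paper's is slightly slicker in that it handles everything with a single Cauchy-surface hit and no case split.
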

\begin{proof}
Assume for notational simplicity that $t_0 = 0$ and let $\gamma$ be an inextendible future-directed timelike curve. We argue by contradiction and suppose that $\gamma$ stays forever in $\{ t > 0 \}$ (the case with $\{ t < 0 \}$ is analogous). By Proposition \ref{prop_tau_proper} (a), there is $r$ such that $\Sigma_r \subset \{ t <  \max(|x|-1,0) \}$. Since $\Sigma_r$ is a Cauchy surface, $\gamma$ meets $\Sigma_r$ at some $\gamma(s_0) = (t_0,x_0)$. Since $\gamma$ stays in $\{ t > 0 \}$, we must have 
\[
|x_0| > 1 \quad \text{ and } 0 < t_0 < |x_0|-1.
\]
Now $\gamma(s)$ must stay in $J^-(t_0,x_0) \cap \{ t > 0 \}$ for $s \leq s_0$, but by \eqref{metric_a1} this set is contained in the compact set $J^-_{\mathrm{Min}}(t_0,x_0) \cap \{ t \geq 0 \}$ where $J^-_{\mathrm{Min}}$ is the causal past with respect to $\gm$. However, $\tau$ is bounded in any compact set, which contradicts the fact that $\tau(\gamma(s)) \to -\infty$ as $s \to -\infty$.
\end{proof}

\begin{figure}
\centering
\includegraphics[width=0.4\textwidth]{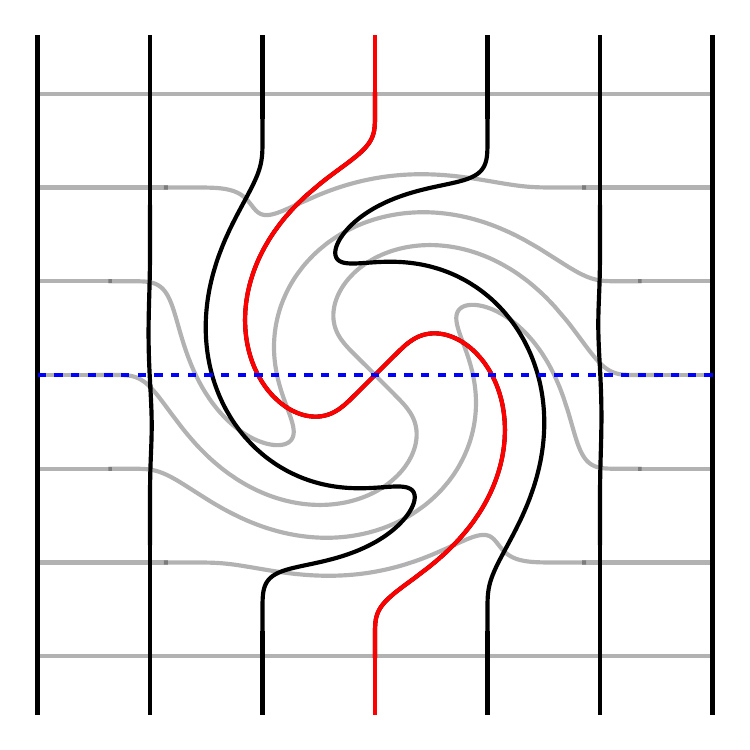}
\caption{Cartesian grid in the complex plane under the diffeomorphism $F(r e^{i \theta}) = r e^{i (\theta + \chi(r) \theta_0)}$
where $\theta_0 = 3 \pi / 4$ and $\chi$ is a smooth, monotone cutoff function from $\mR$ to $[0,1]$ satisfying $\chi(0) = 1$ and $\chi(r)=0$ for $r > 1$. Pushing $g_\Min$ on $\mR^{1+1} = \mC$ forward through $F$, we get a metric $g$ satisfying (1.1) and (1.2). The red curve is the timelike $g$-geodesic $t \mapsto F(t,0)$. It intersects three times the set $\{t = 0\}$ (in blue). 
}
\label{fig_twirl}
\end{figure}

\begin{proof}[Proof of Proposition \ref{prop_tau_proper} {\rm (b)}]
We first claim that for any $r \in \mR$, 
\begin{equation} \label{sigmar_cylinder_compact}
\text{$\Sigma_r \cap (\mR \times \ol{B})$ is compact.}
\end{equation}
To prove this, define 
\[
T = \max_{x \in \p B} |\alpha(r,x)|.
\]
For any $z \in \mR^{1+n}$, let $\eta_z$ be the integral curve of $-\mathrm{grad}_g(\tau)$ with $\eta_z(0) = z$. Note that $\eta_z$ is timelike and future-directed, depends smoothly on $z$, and  $\tau$ is strictly increasing along $\eta_z$. Since $\Sigma_r$ is a Cauchy surface, each $\eta_z$ meets $\Sigma_r$ transversally at a unique time $S(z)$, and $S$ is smooth by Lemma \ref{lemma_intersection}. Consider the smooth map  
\[
F: \{ t = 0 \} \to \Sigma_r, \ \ F(z) = \eta_z(S(z)).
\]
Lemma \ref{lemma_timelike_intersection} ensures that for any $w \in \Sigma_r$, the curve $\eta_w$ meets $\{t=0\}$ at some $z$, and then $w = F(z)$. Thus $F$ is surjective. Now if $z = (0,x)$ with $|x| > T + 1$ and if the curve $\eta_z$ meets $\mR \times \p B$, then the first point of intersection is in the set $\{ t > T \}$ by \eqref{metric_a1}. Moreover, $\tau > r$ in the set $(\mR \times \p B) \cap \{ t > T \}$ by the definition of $T$ and the fact that $\alpha$ is strictly increasing in $r$. Similarly, for negative times $\eta_z$ first meets $\mR \times \p B$ (if it meets $\mR \times \p B$ at all) in a set where $\{ \tau < r \}$. Thus $\eta_{(0,x)}$ for $|x| > T+1$ cannot meet $\Sigma_r$ inside $\mR \times \ol{B}$. It follows that 
\[
\Sigma_r \cap (\mR \times \ol{B}) \subset F(\{ (0,x) \,:\, |x| \leq T+1 \}).
\]
The latter set is compact by continuity of $F$, which proves \eqref{sigmar_cylinder_compact}.

Next we prove that 
\begin{equation} \label{components}
\text{ $\mR^{1+n} \setminus \Sigma_r$ has exactly two components given by $\{ \tau > r \}$ and $\{ \tau < r \}$.}
\end{equation}
To see that $\{ \tau > r \}$ is connected, fix two points $z_0, z_1 \in \{ \tau > r \}$ and let $\gamma: [0,1] \to \mR^{1+n}$ be some continuous curve joining $z_0$ and $z_1$. Let $\rho = \min\{ \tau(z_0), \tau(z_1) \} > r$. The set $\{ t \in [0,1] \,:\, \tau(\gamma(t)) < \rho \}$ is an open subset of $(0,1)$, hence it is the union of disjoint open intervals $(a_j, b_j)$. Since $\Sigma_{\rho}$ is connected, we may replace each $\gamma|_{[a_j,b_j]}$ by a continuous curve in $\Sigma_{\rho}$ with the same endpoints. By this procedure we obtain a continuous curve in $\{ \tau > r \}$ joining $z_0$ and $z_1$, which shows that $\{ \tau > r \}$ is connected. The argument for $\{ \tau < r \}$ is analogous. These are maximal connected subsets of $\mR^{1+n} \setminus \Sigma_r$, since any strictly larger set contains a point both in $\{ \tau > r \}$ and $\{ \tau < r \}$, and such points cannot be connected by a continuous curve in $\mR^{1+n} \setminus \Sigma_r$. This proves \eqref{components}.

Finally we prove Proposition \ref{prop_tau_proper} (b). Suppose that $(t,x) \in \mR \times \ol{B}$ and $\tau(t,x) \leq R$. We will show that there is $T = T(R)$ so that $t \leq T$. This proves the statement in (b) for the $+$ sign. The proof for $-$ sign is analogous, and together these facts imply the properness of $\tau|_{\mR \times \ol{B}}$.

By \eqref{sigmar_cylinder_compact} one has $\Sigma_R \cap (\mR \times \ol{B}) \subset (-T,T) \times \ol{B}$ for some $T > 0$. Then $[T,\infty) \times \ol{B}$ is a connected subset of $\mR^{1+n} \setminus \Sigma_R$, and \eqref{components} implies that it must be contained either in $\{ \tau > R \}$ or in $\{ \tau < R \}$. However, since $\tau$ is strictly increasing along $t \mapsto (t,x)$ for $x \in \p B$, one must have $\tau(t,x) > R$ when $x \in \p B$ and $t$ is sufficiently large. This implies that 
\[
[T,\infty) \times \ol{B} \subset \{ \tau > R \}.
\]
Consequently $t \leq T$ whenever $(t,x) \in \mR \times \ol{B}$ and $\tau(t,x) \leq R$.
\end{proof}


\section{The SPW scattering relation} \label{sec_geometric}

In this section we will prove Theorem \ref{thm_geometric}. It is enough to consider the case where $\omega = e_1$, so that 
\[
\Sigma_{\pm} = \{ (t, x_1,x') \in \mR^{1+n} \,:\, x_1 = \pm 1 \}, \qquad \Sigma_{\pm,\sigma} = \Sigma_{\pm} \cap \{ t = \sigma \}.
\]
Recall that for $z \in \Sigma_-$, $\gamma_z: (-\infty,\rho(z)) \to \mR^{1+n}$ is the geodesic with $\gamma_z(0) = z$ and $\dot{\gamma}_z(0) = (1,e_1)$, $r_+(z)$ is the time when $\gamma_z$ exits $\{ x_1 \leq 1 \}$, $\mathcal{T}_g = \{ z \in \Sigma_- \,:\, r_+(z) = \rho(z) \}$ is the trapped set, and the SPW scattering relation is the map 
\[
\beta_g^{\lambda}: \Sigma_- \setminus \mathcal{T}_g \to T^* \Sigma_+, \ \ z \mapsto (\gamma_z(r_+(z)), \lambda(\gamma_z(r_+(z))) \dot{\gamma}_z(r_+(z))).
\]
We start by proving a weak version of Theorem \ref{thm_geometric} that gives a diffeomorphism property from $\Sigma_-$ to $\Sigma_+$ and includes a nontrapping condition.

\begin{Proposition} \label{prop_nontrapping}
Let $g$ be a Lorentzian metric in $\mR^{1+n}$ satisfying \eqref{metric_a1},  let $\lambda \in C^{\infty}(\Sigma_+)$ be positive, and suppose that for any $\sigma \in \mR$ one has 
\[
\beta_g(\Sigma_{-,\sigma} \setminus \mathcal{T}_g) = \beta_{\gm}^{\lambda}(\Sigma_{-,\sigma}).
\]
Then 
\begin{itemize}
\item 
$\mathcal{T}_g = \emptyset$ and each $\gamma_z$ is defined on all of $\mR$;
\item 
$r_+$ is smooth on $\Sigma_-$;
\item 
 the map $z \mapsto \gamma_z(r_+(z))$ is a diffeomorphism $\Sigma_{-,\sigma} \to \Sigma_{+,\sigma+2}$ for any $\sigma$; and 
 \item
 $\dot{\gamma}_z(r_+(z)) = \lambda(\gamma_z(r_+(z))) (1,e_1)$ for all $z \in \Sigma_-$.
 \end{itemize}
\end{Proposition}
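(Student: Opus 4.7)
The plan is to establish the four bulleted conclusions in turn, using the scattering hypothesis together with the fact that $g = \gm$ outside $\mR \times \ol B$. First, the fourth bullet is immediate: for $z \in \Sigma_- \setminus \mathcal{T}_g$ the hypothesis forces $\dot{\gamma}_z(r_+(z)) = \lambda(\gamma_z(r_+(z)))(1,e_1)$, and since the $x_1$-component $\lambda > 0$ makes the exit through $\Sigma_+$ transversal, the implicit function theorem yields smoothness of $r_+$ and of $\Psi(z) := \gamma_z(r_+(z))$ on $\Sigma_- \setminus \mathcal{T}_g$. For injectivity of $\Psi_\sigma := \Psi|_{\Sigma_{-,\sigma} \setminus \mathcal{T}_g}$, if $\Psi(z_1) = \Psi(z_2) = w$ with $r_+(z_1) < r_+(z_2)$, the reverse geodesic $\tilde{\gamma}_w$ (with $\tilde{\gamma}_w(0) = w$, $\dot{\tilde{\gamma}}_w(0) = -\lambda(w)(1,e_1)$, and equal to $\gamma_{z_j}(r_+(z_j) - \cdot)$ by uniqueness) crosses $\Sigma_-$ at $r_+(z_1)$ with velocity $-(1,e_1)$, transversally entering $\{x_1 < -1\}$; there $g = \gm$ and the geodesic continues as a Minkowski straight line with $x_1$ strictly decreasing, so $\tilde{\gamma}_w(r_+(z_2))$ would have $x_1 < -1$, contradicting $\tilde{\gamma}_w(r_+(z_2)) = z_2 \in \Sigma_-$. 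Surjectivity of $\Psi_\sigma$ onto $\Sigma_{+,\sigma+2}$ is immediate from the hypothesis.

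The main technical step is to prove that $\Psi_\sigma$ is proper, which then yields $\mathcal{T}_g = \emptyset$. Given a compact $K \subset \Sigma_{+,\sigma+2}$ and $z_n \in \Psi_\sigma^{-1}(K)$, one rules out $|x_n'| \to \infty$ (there $\gamma_{z_n}$ is a Minkowski line and $\Psi(z_n)$ escapes in $x'$), so $z_n \to z^* \in \Sigma_{-,\sigma}$ up to a subsequence. The delicate case is $z^* \in \mathcal{T}_g$: continuous dependence of ODEs then forces $r_+(z_n) \to \infty$. A key observation is that each $\gamma_{z_n}([0, r_+(z_n)])$ lies entirely in the slab $\{-1 \leq x_1 \leq 1\}$, because a transversal crossing into $\{x_1 < -1\}$ at some intermediate time would be terminal by the same Minkowski-line argument and would prevent the subsequent exit at $\{x_1 = 1\}$. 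Hence each reverse geodesic $\tilde{\gamma}_{\Psi(z_n)}$ stays in that slab on $[0, r_+(z_n)]$, and continuous dependence on any fixed compact $[0, R]$ transfers the containment to the limit $\tilde{\gamma}_{w^*}$ for all $r \geq 0$; however, surjectivity supplies a preimage $z^{**}$ of $w^*$, past which $\tilde{\gamma}_{w^*}$ exits into $\{x_1 < -1\}$ at $r = r_+(z^{**})$, contradicting the containment. Thus $z^* \notin \mathcal{T}_g$ and $\Psi_\sigma$ is proper. A proper continuous bijection is a homeomorphism, so $\Sigma_{-,\sigma} \setminus \mathcal{T}_g$ is an open subset of $\Sigma_{-,\sigma} \cong \mR^{n-1}$ homeomorphic to $\mR^{n-1}$ whose complement $\mathcal{T}_g \cap \Sigma_{-,\sigma} \subset \{|x'| \leq 1\}$ is compact; a standard topological argument (connectedness for $n = 2$, an end-count for $n \geq 3$) then forces $\mathcal{T}_g \cap \Sigma_{-,\sigma} = \emptyset$, hence $\mathcal{T}_g = \emptyset$.

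Completeness of $\gamma_z$ on $\mR$ is easy: for $r \leq 0$ the geodesic lies in $\{x_1 \leq -1\}$ (Minkowski region, straight line defined for all $r \leq 0$), and beyond $r_+(z)$ it lies in $\{x_1 \geq 1\}$ (also in the Minkowski region, straight line with velocity $\lambda(1,e_1)$ defined for all $r \geq r_+(z)$). Finally, for the diffeomorphism property of $\Psi_\sigma$ I would apply the implicit function theorem to the flow map $\Phi(z,r) = \gamma_z(r)$ at $(z, r_+(z))$: local invertibility there yields smoothness of $\Psi_\sigma^{-1}$, completing the argument. I expect the main obstacle to be showing that $d\Phi$ is non-singular --- equivalently, ruling out focal points of $\Sigma_-$ along $\gamma_z$; the natural approach is to use the uniform direction $(1,e_1)$ of all exit velocities (the last bullet) to convert a focal-point Jacobi field into a violation of the already-established injectivity of $\Psi_\sigma$ via a second-order variation argument.
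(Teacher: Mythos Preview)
Your forward-map strategy runs into two real problems, both of which the paper sidesteps by working with the \emph{inverse} map.

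\textbf{The topological step is wrong for $n\geq 3$.} You reduce $\mathcal T_g\cap\Sigma_{-,\sigma}=\emptyset$ to the claim that $\mR^{n-1}\setminus K\cong\mR^{n-1}$ forces $K=\emptyset$ when $K$ is compact, and you propose an ``end-count'' for $n\geq 3$. But for $m=n-1\geq 2$ both $\mR^m$ and $\mR^m\setminus K$ have exactly one end, so end-counting cannot distinguish them. The statement is still true, but it needs a homology argument: from the long exact sequence of $(\mR^m,\mR^m\setminus K)$ and Alexander duality one gets $H_{m-1}(\mR^m\setminus K)\cong\check H^0(K)\neq 0$ whenever $K\neq\emptyset$, whereas $H_{m-1}(\mR^m)=0$. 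So your route can be salvaged, but not by the argument you stated.

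\textbf{The ``focal point'' obstacle is a red herring.} You anticipate needing $d\Phi$ nonsingular at $(z,r_+(z))$ to get smoothness of $\Psi_\sigma^{-1}$; that is the content of a later, harder result (absence of focal points of $\Sigma_{-,\sigma}$), and it is unnecessary here. The inverse is given explicitly: for $w\in\Sigma_{+,\sigma+2}$, run the geodesic with initial velocity $-\lambda(w)(1,e_1)$ until it meets $\Sigma_-$. Since this backward geodesic depends smoothly on $w$ and meets $\Sigma_-$ transversally with velocity $-(1,e_1)$, the hitting time is smooth (implicit function theorem), hence $\kappa=\Psi_\sigma^{-1}$ is smooth. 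No Jacobi fields are required.

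Both issues dissolve if you switch to the paper's approach: define $\kappa:\Sigma_{+,\sigma+2}\to\Sigma_{-,\sigma}$ by the backward flow as above. It is a smooth injective immersion, so $\kappa(\Sigma_{+,\sigma+2})$ is open in $\Sigma_{-,\sigma}$; it is also closed because $\kappa$ equals the Minkowski translation $(\sigma+2,1,x')\mapsto(\sigma,-1,x')$ on $\{|x'|\geq 1\}$ and is continuous on the compact set $\{|x'|\leq 1\}$. Connectedness of $\Sigma_{-,\sigma}$ then gives surjectivity of $\kappa$, hence $\mathcal T_g=\emptyset$, and $\Psi_\sigma=\kappa^{-1}$ is automatically a diffeomorphism. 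This open--closed argument replaces both your properness step and the faulty end-count in one stroke.
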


For the proof we formulate two lemmas that will also be needed later when studying the eikonal equation in Section \ref{sec_eikonal}. For completeness we also state the Hadamard global inverse function theorem, which will be used in several places in this article to prove global diffeomorphism properties.

\begin{Theorem}[{\cite[Theorem 6.2.8]{krantz2002}}] \label{thm_hadamard}
Let $M_1$ and $M_2$ be smooth connected manifolds having the same dimension, and let $f: M_1 \to M_2$ be a smooth map. If $f$ is proper, the derivative $D f$ is invertible everywhere on $M_1$, and $M_2$ is simply connected, then $f$ is a diffeomorphism.
\end{Theorem}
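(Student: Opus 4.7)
The plan is to show that under the given hypotheses $f$ is a covering map onto $M_2$, and then use simple connectedness of $M_2$ to conclude that this cover has a single sheet, so that $f$ is a bijection, and finally to upgrade this to a diffeomorphism using the local smooth structure. First I would observe that the invertibility of $Df$ combined with the inverse function theorem implies $f$ is a local diffeomorphism; in particular $f$ is an open map. Equality of the dimensions of $M_1$ and $M_2$ is exactly what lets the inverse function theorem produce a local diffeomorphism rather than merely a submersion or immersion.

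Next I would check that $f$ is surjective. A proper continuous map between locally compact Hausdorff spaces is closed, so $f(M_1)$ is closed in $M_2$; it is also open because $f$ is open, and it is nonempty; connectedness of $M_2$ then forces $f(M_1) = M_2$. The central step is proving that $f$ is a covering map. Fix $y \in M_2$. Properness makes the fiber $f^{-1}(y)$ compact, and the local diffeomorphism property makes it discrete, so $f^{-1}(y) = \{x_1, \ldots, x_k\}$ is finite. Choose pairwise disjoint open neighborhoods $U_i \ni x_i$ on which $f|_{U_i}$ is a diffeomorphism onto $f(U_i)$. The set $K = M_1 \setminus \bigcup_i U_i$ is closed and disjoint from $f^{-1}(y)$, so $y \notin f(K)$; since $f$ is closed, $f(K)$ is closed, hence
\[
V := (M_2 \setminus f(K)) \cap \bigcap_{i=1}^{k} f(U_i)
\]
is an open neighborhood of $y$. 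By construction $f^{-1}(V) \subset \bigsqcup_i U_i$ and each piece $U_i \cap f^{-1}(V)$ maps diffeomorphically onto $V$, so $V$ is evenly covered and $f$ is a smooth covering map.

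To finish, since $M_2$ is simply connected and $M_1$ is connected and nonempty, standard covering space theory tells us that any connected covering of a simply connected space is trivial, i.e.\ $f$ is a homeomorphism. Combined with the local diffeomorphism property, the set-theoretic inverse is smooth, so $f$ is a diffeomorphism. The main technical obstacle is the construction of the evenly covered neighborhood $V$: here properness enters essentially (only through the closed-map property), and one must carefully use finiteness of the fiber to shrink $V$ so that $f^{-1}(V)$ splits as a disjoint union of sheets; once this is done, the remaining topological input (a connected cover of a simply connected base is a homeomorphism) is a direct consequence of the lifting criterion.
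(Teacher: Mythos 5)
Your proof is correct. Note that the paper does not prove this statement at all: it is quoted verbatim from the cited reference (Krantz--Parks, Theorem 6.2.8) and used as a black box. Your argument --- local diffeomorphism by the inverse function theorem, surjectivity from openness plus closedness (properness) plus connectedness of $M_2$, the construction of an evenly covered neighborhood $V = (M_2 \setminus f(K)) \cap \bigcap_i f(U_i)$ showing $f$ is a covering map, and triviality of a connected cover of a simply connected base --- is the standard proof of the Hadamard global inverse function theorem and is complete; the only hypotheses you implicitly use beyond those stated (local compactness and local path-connectedness of manifolds) are automatic.
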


The first lemma is a simple result on smoothness of intersection times.

\begin{Lemma} \label{lemma_intersection}
Let $\mathcal{Z}$ be a manifold, and assume that we have a smooth map 
\[
\mathcal{Z} \times \mR \to \mR^{1+n}, \ \ (z,r) \mapsto \eta_z(r).
\]
Let $\Sigma$ be a smooth hypersurface in $\mR^{1+n}$, and suppose that 
\[
\eta_{z_0}(r_0) \in \Sigma, \qquad \dot{\eta}_{z_0}(r_0) \text{ is transverse to $\Sigma$.}
\]
Then there is a smooth map $r(z)$ near $z_0$ such that $r(z_0) = r_0$ and 
\[
\eta_z(r) \in \Sigma \text{ for $(z,r)$ near $(z_0,r_0)$} \quad \Longleftrightarrow \quad r = r(z).
\]
\end{Lemma}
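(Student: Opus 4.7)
The plan is to apply the implicit function theorem to a defining function of $\Sigma$. Since $\Sigma \subset \mR^{1+n}$ is a smooth hypersurface, a standard local normal-form result gives a neighborhood $V$ of $p_0 := \eta_{z_0}(r_0)$ and a smooth function $\Phi: V \to \mR$ with $d\Phi \neq 0$ on $V$ such that $\Sigma \cap V = \Phi^{-1}(0)$. (For example, if $\Sigma$ is locally the graph $x_k = \psi(x_1, \ldots, \hat{x}_k, \ldots, x_{1+n})$, take $\Phi = x_k - \psi$.)

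Next, I would set $F(z,r) = \Phi(\eta_z(r))$, defined on a neighborhood of $(z_0, r_0)$ in $\mathcal{Z} \times \mR$ for which $\eta_z(r) \in V$. Then $F$ is smooth, $F(z_0, r_0) = 0$, and
\[
\p_r F(z_0, r_0) = d\Phi(p_0) \bigl(\dot{\eta}_{z_0}(r_0)\bigr) \neq 0,
\]
the non-vanishing being exactly the hypothesis that $\dot{\eta}_{z_0}(r_0)$ is transverse to $\Sigma = \ker d\Phi(p_0)$ at $p_0$. The implicit function theorem then produces open neighborhoods $U \ni z_0$ and $I \ni r_0$, and a unique smooth function $r: U \to I$ with $r(z_0) = r_0$ such that, for $(z,r) \in U \times I$, the equation $F(z,r) = 0$ holds if and only if $r = r(z)$.

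Finally, I would shrink $U$ and $I$ further if needed so that $\eta_z(r) \in V$ for all $(z,r) \in U \times I$; this is possible by continuity of $(z,r) \mapsto \eta_z(r)$ at $(z_0, r_0)$. On this smaller product neighborhood, the condition $\eta_z(r) \in \Sigma$ is equivalent to $F(z,r) = 0$, which by the previous step is equivalent to $r = r(z)$. This yields both the existence of the smooth map $r(z)$ and its uniqueness in a neighborhood of $(z_0, r_0)$, as required.

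There is no real obstacle here; the entire argument is a packaging of the implicit function theorem, and the only point demanding a moment of care is the translation of the geometric transversality hypothesis into the analytic condition $\p_r F(z_0,r_0) \neq 0$, together with shrinking neighborhoods so that the uniqueness part of the conclusion is genuinely local around $(z_0, r_0)$ and not accidentally violated by a second intersection of $\eta_z$ with $\Sigma$ inside $V$.
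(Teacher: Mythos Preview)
Your proof is correct and follows essentially the same approach as the paper: both introduce a local defining function for $\Sigma$, compose with the map $(z,r)\mapsto\eta_z(r)$, verify that the $r$-derivative is nonzero via the transversality hypothesis, and apply the implicit function theorem. Your attention to shrinking neighborhoods so that the equivalence is genuinely local is a nice touch that the paper leaves implicit.
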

\begin{proof}
Let $\rho: \mR^{1+n} \to \mR$ be a smooth defining function for $\Sigma$, so that $\Sigma = \rho^{-1}(0)$ and $\nabla \rho(z)$ is a nonzero vector normal to $\Sigma$ for any $z \in \Sigma$. We define the smooth function 
\[
F: \mathcal{Z} \times \mR \to \mR^{1+n}, \ \  F(z,r) = \rho(\eta_z(r)).
\]
Then $F(z_0,r_0) = 0$ and $\p_r F(z_0,r_0) = \nabla \rho(\eta_{z_0}(r_0)) \cdot \dot{\eta}_{z_0}(r_0)$. Since $\dot{\eta}_{z_0}(r_0)$ is transverse to $\Sigma$, we have $\p_r F(z_0,r_0) \neq 0$. By the implicit function theorem, there is a smooth function $r(z)$ near $z_0$ such that $r(z_0) = r_0$ and 
\[
F(z,r) = 0 \text{ near $(z_0,r_0)$} \quad \Longleftrightarrow \quad r = r(z).
\]
Since $F(z,r) = 0$ is equivalent to $\eta_z(r) \in \Sigma$, the lemma follows.
\end{proof}

The second lemma gives a nontrapping property for a family of geodesics with certain properties. It will also be needed in Section \ref{sec_eikonal}.

\begin{Lemma} \label{lemma_kappa}
Let $g$ be a Lorentzian metric on $\mR^{1+n}$, and let $\eta_z$ be a family of geodesics depending smoothly on $z \in \Sigma_-$ with the following properties.
\begin{enumerate}
\item[(i)]
Any $\eta_z$ that reaches $\Sigma_+$ meets both $\Sigma_{\pm}$ exactly once, with $\eta_z(0) = z$ and $\eta_z(r_+(z)) \in \Sigma_+$.
\item[(ii)]
Any $w \in \Sigma_+$ lies on a unique $\eta_{z}$ where $z \in \Sigma_-$, which defines a map $\kappa: \Sigma_+ \to \Sigma_-$, $w \mapsto z$.
\item[(iii)]
$\dot{\eta}_{\kappa(w)}(0)$ and $\dot{\eta}_{\kappa(w)}(r_+(\kappa(w)))$ are transverse to $\Sigma_{\pm}$ and depend smoothly on $w$.
\end{enumerate}
Then $\kappa: \Sigma_+ \to \Sigma_-$ is an embedding. If $\kappa$ is also surjective or proper, then it is a diffeomorphism, 
and for any $z \in \Sigma_-$ the geodesic $\eta_z$ meets $\Sigma_+$ at a unique time $r_+(z)$ depending smoothly on $z$.
\end{Lemma}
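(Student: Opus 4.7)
The plan is to realize $\kappa$ as the smooth inverse of the endpoint map $\Psi: U \to \Sigma_+$, $\Psi(z) = \eta_z(r_+(z))$, where $U = \{ z \in \Sigma_- : \text{$\eta_z$ meets $\Sigma_+$} \}$. By (i), each point of $U$ has a well-defined first intersection time $r_+(z)$, and combined with (ii) this yields the set-theoretic identities $\Psi \circ \kappa = \id_{\Sigma_+}$ and $\kappa \circ \Psi = \id_U$. The task then reduces to showing openness of $U$, smoothness of $\Psi$ and of $\kappa$, and (under the extra hypothesis of surjectivity or properness) that $U = \Sigma_-$.

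First I would establish openness of $U$ and smoothness of $r_+$ there. Fix $z_0 \in U$ and set $w_0 = \Psi(z_0)$, so $z_0 = \kappa(w_0)$; by (iii) the vector $\dot{\eta}_{z_0}(r_+(z_0))$ is transverse to $\Sigma_+$. Applying Lemma \ref{lemma_intersection} to the smooth map $(z,r) \mapsto \eta_z(r)$ and the hypersurface $\Sigma_+$ at the point $(z_0, r_+(z_0))$ produces a smooth function $r(z)$ on a neighborhood $V_0$ of $z_0$ with $\eta_z(r(z)) \in \Sigma_+$ for $z \in V_0$. The uniqueness part of (i) forces $r = r_+$ on $V_0$, so $V_0 \subset U$ and $r_+$ is smooth. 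Hence $\Psi: U \to \Sigma_+$ is smooth.

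The main step is smoothness of $\kappa$. Since $\dot{\eta}_z(0)$ is based at $\eta_z(0) = z$, the base point projection $\pi: T\mR^{1+n} \to \mR^{1+n}$ satisfies $\pi(\dot{\eta}_{\kappa(w)}(0)) = \kappa(w)$. Assumption (iii) asserts precisely that $w \mapsto \dot{\eta}_{\kappa(w)}(0)$ is a smooth map $\Sigma_+ \to T\mR^{1+n}$, so $\kappa$ is the composition of two smooth maps and is itself smooth. Combined with the mutual inverse identities from the first paragraph, $\kappa: \Sigma_+ \to U$ and $\Psi: U \to \Sigma_+$ are smooth mutual inverses and hence diffeomorphisms; in particular $\kappa: \Sigma_+ \to \Sigma_-$ is an embedding with image $U$.

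For the final statement, if $\kappa$ is surjective then $U = \Sigma_-$ immediately. If instead $\kappa$ is proper, then since $\Sigma_-$ is Hausdorff and locally compact $\kappa$ is a closed map, so its image $U$ is closed in $\Sigma_-$; combined with openness, non-emptiness (as $\Sigma_+$ is non-empty), and the connectedness of $\Sigma_-$ as a hyperplane in $\mR^{1+n}$, we conclude $U = \Sigma_-$. Then $\Psi$ is defined and smooth on all of $\Sigma_-$, and the uniqueness of the intersection time $r_+(z)$ for every $z \in \Sigma_-$ comes from (i). The only delicate point in the proof is the smoothness of $\kappa$, which is not forced by (i)--(ii) alone but becomes transparent once one observes that assumption (iii) carries along with the tangent vector the information of its base point.
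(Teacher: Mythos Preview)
Your proof is correct and, in two places, takes a more elementary route than the paper.

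For the smoothness of $\kappa$, the paper reverses the geodesic: it writes $\kappa(w) = \gamma_{w,v_+(w)}(-r(w))$ for the geodesic with initial data $(w,v_+(w))$, where $v_+(w) = \dot{\eta}_{\kappa(w)}(r_+(\kappa(w)))$ is based at $w$ and smooth by (iii), and then applies Lemma~\ref{lemma_intersection} at $\Sigma_-$ (using the transversality at $\Sigma_-$ in (iii)) to get smoothness of $r(w)$ and hence of $\kappa$. Your observation that the base point of $\dot{\eta}_{\kappa(w)}(0)$ is $\kappa(w)$ itself, so that the smooth dependence of this tangent vector in (iii) already encodes smoothness of $\kappa$, bypasses the reversed-geodesic argument entirely. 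This is a genuine shortcut, and it shows that the transversality at $\Sigma_-$ in (iii) is not actually needed for the embedding conclusion.

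For the proper case, the paper invokes the Hadamard global inverse function theorem (Theorem~\ref{thm_hadamard}), which needs $\Sigma_-$ simply connected. Your open--closed--connected argument is more elementary: since $\kappa$ is already a diffeomorphism onto the open set $U$, properness makes the image closed, and connectedness of the hyperplane $\Sigma_-$ finishes it. This avoids appealing to simple connectedness and to the covering-space machinery underlying Hadamard's theorem. Both routes are valid here; yours is lighter, while the paper's keeps the argument parallel to other places in the article where Hadamard is genuinely needed.
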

\begin{proof}
If $w \in \Sigma_+$, write $r(w) = r_+(\kappa(w))$ and $v_{+}(w) = \dot{\eta}_{\kappa(w)}(r(w))$ and let $\gamma_{z,v}$ be the $g$-geodesic through $z$ in direction $v$. Uniqueness of geodesics and (i) give that $\gamma_{w,v_+(w)}(s) = \eta_{\kappa(w)}(r(w) + s)$. Thus in particular 
\[
\gamma_{w,v_+(w)}(-r(w)) \in \Sigma_-, \qquad \dot{\gamma}_{w,v_+(w)}(-r(w)))\text{ is transverse to $\Sigma_-$}.
\]
By (iii) $\gamma_{w,v_+(w)}$ depends smoothly on $w$ and by (i) it meets $\Sigma_-$ only at time $-r(w)$. Lemma \ref{lemma_intersection} implies that $r(w)$ depends smoothly on $w$. Thus the map 
\[
\kappa: \Sigma_+ \to \Sigma_-, \ \ \kappa(w) = \gamma_{w,v_+(w)}(-r(w)),
\]
is smooth on $\Sigma_+$.

On the other hand, fix $\bar{w} \in \Sigma_+$ and write $\bar{z} = \kappa(\bar{w}) \in \Sigma_-$. Since 
\[
\eta_{\bar{z}}(r_+(\bar{z})) \in \Sigma_+, \qquad \dot{\eta}_{\bar{z}}(r_+(\bar{z}))  \text{ is transverse to $\Sigma_+$},
\]
Lemma \ref{lemma_intersection} ensures that there is a smooth function $R(z)$ for $z$ near $\bar{z}$ such that 
\[
\eta_z(R(z)) \in \Sigma_+ \text{ for $z$ near $\bar{z}$.}
\]
By (i) one has $R(z) = r_+(z)$ and $\kappa(\eta_z(R(z))) = z$ near $\bar{z}$. This implies that $D\kappa$ is invertible near $\bar{w}$, and since $\bar{w} \in \Sigma_+$ was arbitrary we see that $\kappa$ is a local diffeomorphism. If $\kappa(w_1) = \kappa(w_2)$ then (i) implies that $w_1 = w_2$, showing that $\kappa$ is injective and hence an embedding.

If $\kappa$ is also surjective, then it is a diffeomorphism $\Sigma_+ \to \Sigma_-$. On the other hand, if $\kappa$ is proper, the Hadamard global inverse function theorem (Theorem \ref{thm_hadamard}) implies that $\kappa$ is a diffeomorphism. Then any $z \in \Sigma_-$ is of the form $z = \kappa(w)$ for a unique $w \in \Sigma_+$. By (ii) $\eta_z$ reaches $\Sigma_+$, and by (i) $\eta_z$ meets $\Sigma_+$ at the unique time $r_+(z)$. We proved above that $r_+(z) = R(z)$ is smooth.
\end{proof}

\begin{proof}[Proof of Proposition \ref{prop_nontrapping}]
For the metric $\gm$, the geodesics starting at $\Sigma_-$ in direction $(1,e_1)$ are the straight lines $r \mapsto z + r(1, e_1)$. This shows that  
\[
\beta_{\gm}^{\lambda}(\Sigma_{-,\sigma}) = \{ (w, \lambda(w)(1,e_1)) \,:\, w \in \Sigma_{+,\sigma+2} \}.
\]
The assumption $\beta_{g}(\Sigma_{-,\sigma} \setminus \mathcal{T}_g) = \beta_{\gm}^{\lambda}(\Sigma_{-,\sigma})$ for all $\sigma$ implies that 
\[
\{ (\gamma_z(r_+(z)), \dot{\gamma}_z(r_+(z))) \,:\, z \in \Sigma_{-,\sigma} \setminus \mathcal{T}_g \} = \{ (w, \lambda(w)(1,e_1)) \,:\, w \in \Sigma_{+,\sigma+2} \}.
\]
We can read off two facts:
\begin{itemize}
\item 
For any $z \in \Sigma_{-,\sigma}$ with $z \notin \mathcal{T}_g$, one has $\gamma_z(r_+(z)) \in \Sigma_{+,\sigma+2}$ and $\dot{\gamma}_z(r_+(z)) = \lambda(1,e_1)$.
\item 
Any $w \in \Sigma_{+,\sigma+2}$ lies on some geodesic $\gamma_z$ with $z \in \Sigma_{-,\sigma}$.
\end{itemize}
Combining this with \eqref{metric_a1}, which implies that geodesics are straight lines in  $\{ |x_1| \geq 1 \}$ and that any $\gamma_z$ that reaches $\Sigma_+$ from $\{ x_1 < 1 \}$ must stay in $\{ x_1 > 1 \}$ afterwards, we obtain that any geodesic $\gamma_z$ with $z \notin \mathcal{T}_g$ intersects both $\Sigma_-$ and $\Sigma_+$ exactly once. Moreover, the tangent vector on $\Sigma_-$ is $(1,e_1)$ and on $\Sigma_+$ it is $\lambda(w)(1,e_1)$. By uniqueness of geodesics, it follows that any $w \in \Sigma_{+,\sigma+2}$ lies on exactly one geodesic $\gamma_{\kappa(w)}$ where $\kappa(w) \in \Sigma_-$ (then $\kappa(w) \in \Sigma_{-,\sigma}$).

We have proved that the family $\gamma_z$ satisfies the conditions in Lemma \ref{lemma_kappa}. Thus $\kappa: \Sigma_+ \to \Sigma_-$ is an embedding. Then also $\kappa: \Sigma_{+,\sigma+2} \to \Sigma_{-,\sigma}$ is an embedding, and in particular $\kappa(\Sigma_{+,\sigma+2})$ is open in $\Sigma_{-,\sigma}$. We write 
\[
\kappa(\Sigma_{+,\sigma+2}) = \kappa(\Sigma_{+,\sigma+2} \cap \{ |x'| \geq 1 \}) \cup \kappa(\Sigma_{+,\sigma+2} \cap \{ |x'| \leq 1 \}).
\]
By \eqref{metric_a1}, for $|x'| \geq 1$ one has $\kappa(\sigma+2,1,x') = (\sigma,-1,x')$. Thus $\kappa(\Sigma_{+,\sigma+2} \cap \{ |x'| \geq 1 \})$ is closed. On the other hand, since $\kappa$ is continuous, $\kappa(\Sigma_{+,\sigma+2} \cap \{ |x'| \leq 1 \})$ is compact and hence closed. Thus $\kappa(\Sigma_{+,\sigma+2})$ is an open and closed set, so by connectedness $\kappa(\Sigma_{+,\sigma+2}) = \Sigma_{-,\sigma}$ and $\kappa$ is surjective. By Lemma \ref{lemma_kappa} one has $\mathcal{T}_g = \emptyset$, $r_+$ is smooth, and the map $\kappa^{-1}: z \mapsto \gamma_z(r_+(z))$ is a diffeomorphism mapping $\Sigma_{-,\sigma}$ onto $\Sigma_{+,\sigma+2}$. Since each $\gamma_z$ reaches $\Sigma_+$ with tangent vector $\dot{\gamma}_z(r_+(z)) = \lambda (1,e_1)$, \eqref{metric_a1} implies that each $\gamma_z$ is defined in all of $\mR$.
\end{proof}

The next proposition shows that the set $J^-(\Sigma_{+,\sigma}) \cap \Sigma_-$ agrees with the corresponding set for $\gm$, if the SPW scattering relations agree. Contrary to Proposition \ref{prop_nontrapping}, both \eqref{metric_a1} and \eqref{metric_a2} are assumed, not only \eqref{metric_a1}.

\begin{Proposition} \label{prop_jminus}
Let $g$ be satisfy \eqref{metric_a1}--\eqref{metric_a2}, let $\lambda \in C^{\infty}(\Sigma_+)$ be positive, and suppose that for any $\sigma \in \mR$ one has 
\[
\beta_g(\Sigma_{-,\sigma} \setminus \mathcal{T}_g) = \beta_{\gm}^{\lambda}(\Sigma_{-,\sigma}).
\]
Then 
\[
J^-(\Sigma_{+,\sigma}) \cap \Sigma_- = \{t \le \sigma-2\} \cap \Sigma_-.
\]
\end{Proposition}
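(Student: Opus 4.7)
I prove the two inclusions separately. For $\{t \le \sigma-2\} \cap \Sigma_- \subset J^-(\Sigma_{+,\sigma})$: given $z = (t_0,-1,x')$ with $t_0 \le \sigma-2$, Proposition \ref{prop_nontrapping} supplies a null $g$-geodesic $\gamma_{z''}$ from $z'' := (\sigma-2,-1,x') \in \Sigma_{-,\sigma-2}$ to a point of $\Sigma_{+,\sigma}$. Concatenating it with the future-directed timelike curve $t \mapsto (t,-1,x')$ on $[t_0,\sigma-2]$, which lies in the Minkowski region $\{|x| \ge 1\}$ by \eqref{metric_a1}, gives a future-directed causal curve from $z$ into $\Sigma_{+,\sigma}$.

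For the reverse direction the key step is $\p J^-(\Sigma_{+,\sigma}) \cap \Sigma_- \subset \Sigma_{-,\sigma-2}$. Fix $p$ in this set. By the standard theory of achronal boundaries in globally hyperbolic spacetimes (e.g.\ \cite[Ch.\ 10]{oneill1983}), $p$ is joined to $\Sigma_{+,\sigma}$ by a null geodesic $\mu$ whose tangent at the endpoint $w \in \Sigma_{+,\sigma}$ is $g$-normal to $\Sigma_{+,\sigma}$. Because $\Sigma_{+,\sigma}$ meets $\mR \times \ol B$ only at $(\sigma,1,0)$, where $g = \gm$ by continuity from \eqref{metric_a1}, the future-directed null normals at every $w \in \Sigma_{+,\sigma}$ are proportional to $(1,e_1)$ or $(1,-e_1)$. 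In the latter case the past extension of $\mu$ from $w$ starts in direction $(-1,1,0,\ldots,0)$ and continues as a Minkowski straight line in $\{x_1 > 1\}$ that never reaches $\Sigma_-$, contradicting $p \in \Sigma_-$. Hence $\dot\mu(w)$ is a positive multiple of $(1,e_1)$, and uniqueness of null geodesics forces the image of $\mu$ to agree with that of $\gamma_{\kappa(w)}$ from Proposition \ref{prop_nontrapping}. Since $\gamma_{\kappa(w)}$ is a Minkowski straight line in $\{|x_1| \ge 1\}$ by \eqref{metric_a1} and Proposition \ref{prop_nontrapping}, it meets $\Sigma_-$ only at $\kappa(w)$, so $p = \kappa(w) \in \Sigma_{-,\sigma-2}$.

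To conclude, $A := J^-(\Sigma_{+,\sigma}) \cap \Sigma_-$ is closed in $\Sigma_-$ and, by the previous step, has topological boundary contained in $\Sigma_{-,\sigma-2}$. Hence $A \cap V_\pm$ is clopen in each connected component $V_\pm = \{\pm(t - \sigma + 2) > 0\} \cap \Sigma_-$ of $\Sigma_- \setminus \Sigma_{-,\sigma-2}$, so it is either empty or all of $V_\pm$. The easy inclusion yields $V_- \subset A$. To rule out $V_+ \subset A$ I exhibit one point $z^* = (\sigma+10,-1,x'_0)$ with $|x'_0|$ large that is not in $J^-(\Sigma_{+,\sigma})$. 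Suppose for contradiction there is a future-directed causal curve $\alpha$ from $z^*$ into $\Sigma_{+,\sigma}$. In the Minkowski region $t$ strictly increases along future-directed causal curves (Lemma \ref{lemma_timecone} applied to $\p_t$), so $\alpha$ must enter $\mR \times \ol B$ to let $t$ fall from $\sigma+10$ down to $\sigma$; Minkowski causality before the first entry forces the entry point $(t_1,x_1) \in \mR \times \p B$ to satisfy $t_1 \ge \sigma + 9 + |x'_0|$, and a symmetric argument at the last exit gives $(t_2,x_2) \in \mR \times \p B$ with $t_2 \le \sigma$. Proposition \ref{prop_tau_proper}\,(b) yields $\tau(t_1,x_1) \to +\infty$ as $|x'_0| \to \infty$, while Proposition \ref{prop_tau_proper}\,(a) provides a uniform $r_+$ with $\tau(t_2,x_2) < r_+$. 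Since $\tau$ strictly increases along $\alpha$, the inequality $\tau(t_2,x_2) > \tau(t_1,x_1)$ is violated for $|x'_0|$ sufficiently large, a contradiction. Thus $A = \{t \le \sigma-2\} \cap \Sigma_-$.

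The main obstacle is the second paragraph: invoking the achronal-boundary fact that $\p J^-(\Sigma_{+,\sigma})$ is generated by null geodesics meeting $\Sigma_{+,\sigma}$ orthogonally is where genuinely Lorentzian geometric machinery is used, while the rest of the argument relies only on the Minkowski structure of $g$ outside $\mR \times \ol B$, connectedness, and the $\tau$-properness provided by Proposition \ref{prop_tau_proper}. The case analysis is clean precisely because $\Sigma_{+,\sigma}$ lies essentially in the Minkowski region, which pins the null normals down to $(1,\pm e_1)$.
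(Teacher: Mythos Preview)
Your overall strategy matches the paper's: show $\p J^-(\Sigma_{+,\sigma}) \cap \Sigma_- \subset \Sigma_{-,\sigma-2}$ via the normal null geodesic description of $\p J^-$, then conclude by a connectedness argument. However, there is a genuine gap in the step you label as ``standard'': the theorem from \cite{oneill1983} you invoke requires that the boundary point $p$ actually lie in $J^-(\Sigma_{+,\sigma})$ (so that a causal curve to $\Sigma_{+,\sigma}$ exists, which is then forced to be a normal null geodesic), and your clopen argument later requires that $A = J^-(\Sigma_{+,\sigma}) \cap \Sigma_-$ be closed in $\Sigma_-$. Both hinge on $J^-(\Sigma_{+,\sigma})$ being closed. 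In a globally hyperbolic spacetime $J^-(K)$ is closed for \emph{compact} $K$, but $\Sigma_{+,\sigma}$ is non-compact, and closedness can genuinely fail for non-compact sets: a sequence $p_n \le q_n \in \Sigma_{+,\sigma}$ with $p_n \to p$ may have $q_n$ escaping to infinity in $\Sigma_{+,\sigma}$, and then no limit curve reaches $\Sigma_{+,\sigma}$. The paper handles exactly this issue in Lemmas~\ref{lemma_c_bd}--\ref{lem_cl_bdd}: it decomposes $J^-(\Sigma_{+,\sigma})$ as $(J^-_{\Min}(A) \setminus \mathcal C) \cup J^-(K)$ with $K$ compact, using \eqref{metric_a1} and the ``wrap-around'' Lemma~\ref{lemma_c_bd} to funnel far-away causal curves through a compact piece of $\Sigma_{+,\sigma}$, and only then obtains closedness and the normal null geodesic description (Lemma~\ref{lem_bd_null}). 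Your citation of O'Neill does not substitute for this.

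Once closedness is in hand, your argument is fine and indeed close to the paper's. Your construction of a point $z^* \in V_+ \setminus A$ via Proposition~\ref{prop_tau_proper} is correct but more elaborate than necessary: the paper's Lemma~\ref{lem_cl_bdd} gives directly that every vertical line $r \mapsto z + r(1,0)$ eventually exits $J^-(\Sigma_{+,\sigma})$, by the compactness of $J^+(z) \cap J^-(K)$, and then runs the same boundary/connectedness argument along each such line rather than globally on $V_\pm$. Either endgame works; the substantive missing piece is the closedness of $J^-(\Sigma_{+,\sigma})$.
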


Again the proof will be based on several lemmas. Below we will write 
\[
\mathcal{C} = \mR \times B.
\]
The first lemma is related to points on $\p \mathcal{C}$.

\begin{Lemma} \label{lemma_c_bd}
Let $g$ satisfy \eqref{metric_a1} and let $p_j = (t_j, x_j) \in \p \mathcal{C}$. If $t_1 \leq t_2-\pi$, then $p_1 \leq p_2$. If $g$ also satisfies \eqref{metric_a2}, then $p_1 \leq p_2$ implies $t_1 < t_2 + \pi$.
\end{Lemma}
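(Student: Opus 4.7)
Throughout the plan, write $p_j = (t_j, y_j)$ with $y_j \in \p B = S^{n-1}$. Although \eqref{metric_a1} asserts $g = \gm$ only on the open exterior $\{|x|>1\}$, continuity of the smooth metrics extends the identity to its closure $\{|x| \ge 1\}$; in particular $g = \gm$ on $\p\mathcal{C}$, and the time-orientation condition ``$dt$ future-directed'' extends there as well. This observation lets the whole argument compute causal character and future-orientation along curves on $\p\mathcal{C}$ using the Minkowski metric.

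For the first implication (needing only \eqref{metric_a1}), the plan is to exhibit an explicit $g$-causal, future-directed curve from $p_1$ to $p_2$ that lies entirely on $\p\mathcal{C}$. Let $\gamma\colon [0,L]\to \p B$ be a minimizing unit-speed geodesic of the round sphere $\p B$ from $y_1$ to $y_2$; then $L \le \pi$ (the diameter of the unit sphere) and $|\dot\gamma|_{\mathrm{Eucl}} = 1$. Define
\[
\mu(s) = \bigl(t_1 + s(t_2-t_1),\; \gamma(sL)\bigr), \qquad s \in [0,1].
\]
This curve joins $p_1$ to $p_2$ and stays on $\p\mathcal{C}$, so $g(\dot\mu,\dot\mu)$ may be computed from $\gm = -dt^2 + dx^2$, yielding
\[
g(\dot\mu,\dot\mu) = -(t_2-t_1)^2 + L^2 \;\le\; -\pi^2 + \pi^2 \;=\; 0,
\]
using the hypothesis $t_2 - t_1 \ge \pi \ge L$. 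Hence $\dot\mu$ is $g$-causal; its time component equals $t_2 - t_1 > 0$, so $\dot\mu$ is future-directed, giving $p_1 \le p_2$.

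For the second implication I argue by contradiction. Suppose $p_1 \le p_2$ but $t_1 \ge t_2 + \pi$, equivalently $t_2 \le t_1 - \pi$. Applying the first part with the roles of $p_1$ and $p_2$ swapped yields $p_2 \le p_1$. Since $t_1 > t_2$ forces $p_1 \ne p_2$, concatenating the two future-directed causal curves produces a closed causal curve through $p_1$, contradicting condition (i) in Definition~\ref{def_gh} (here is where \eqref{metric_a2} is used). Therefore $t_1 < t_2 + \pi$.

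I do not expect a serious obstacle. The subtle points are the extension of $g = \gm$ and of the time-orientation from the open exterior to $\p\mathcal{C}$ by continuity (so that the Minkowski computation above is valid), and the handling of the degenerate configurations $y_1 = y_2$ (where $L=0$ and $\mu$ reduces to a vertical timelike segment) and antipodal $y_1 = -y_2$ with $t_2-t_1 = \pi$ (where $\mu$ is exactly null); in both cases the same formula for $\mu$ works verbatim.
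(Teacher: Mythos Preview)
Your proof is correct and follows essentially the same approach as the paper: construct an explicit causal curve on $\partial\mathcal{C}$ using a length-minimizing great-circle arc (the paper splits this into a null arc followed by a vertical timelike segment, whereas you use a single reparametrized curve), and for the second part derive $p_2 \le p_1$ from the first and invoke global hyperbolicity (you use the absence of closed causal curves directly, the paper uses monotonicity of the Cauchy temporal function). These are cosmetic variations of the same argument.
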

\begin{proof}
Let $t_1 \leq t_2-\pi$ and let $\eta: [0,L] \to S^{n-1}$ be a unit speed length minimizing geodesic on $S^{n-1}$ from $x_1$ to $x_2$. Then $L \leq \pi$, and by \eqref{metric_a1} the curve $\gamma(r) = (t_1+r,\eta(r))$ is a future-directed causal curve from $p_1$ to $(t_1+L,x_2)$. Further, joining $(t_1+L,x_2)$ to $(t_2,x_2)$ by the line segment $r \mapsto (t_1+L+r,x_2)$ shows that $p_1 \leq p_2$.

Now assume that also \eqref{metric_a2} holds. We argue by contradiction and suppose that $p_1 \leq p_2$ but $t_1 \geq t_2 + \pi$. Then $t_2 \leq t_1 - \pi$, and the first part implies that $p_2 \leq p_1$. The Cauchy temporal function $\tau$ is strictly increasing on future-directed causal curves. Thus if one had $p_1 \neq p_2$, then the conditions $p_1 \leq p_2$ and $p_2 \leq p_1$ would yield $\tau(p_1) < \tau(p_2)$ and $\tau(p_2) < \tau(p_1)$, which is impossible. Therefore $p_1 = p_2$, which contradicts the assumption $t_1 \geq t_2 + \pi$.
\end{proof}

The next lemma gives a decomposition of $J^-(\Sigma_{+,\sigma})$. Below, $J_{\mathrm{Min}}^{-}$ will denote the causal past with respect to $\gm$.

\begin{Lemma}\label{near_far_J}
Let $g$ satisfy \eqref{metric_a1}--\eqref{metric_a2}, let $\sigma \in \R$ and $R \geq 2 \pi + 1$, and define
\[
A = \Sigma_{+,\sigma} \cap \{|x| \ge R\},
\quad
K = \Sigma_{+,\sigma} \cap \{|x| \le R\}.
\]
Then
\[
J^-(\Sigma_{+,\sigma}) = (J_{\mathrm{Min}}^-(A) \setminus \mathcal C) \cup J^-(K).
\]
\end{Lemma}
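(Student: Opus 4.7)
The plan is to prove the two inclusions separately, exploiting in both directions the observation that $g = \gm$ on $\{|x| \geq 1\}$ (which follows from \eqref{metric_a1} by continuity of $g$ and $\gm$) together with the time-gap Lemma \ref{lemma_c_bd}. The hypothesis $R \geq 2\pi+1$ is used sharply to produce the $\pi$-time gap required by that lemma.

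For $\supseteq$, the inclusion $J^-(K) \subseteq J^-(\Sigma_{+,\sigma})$ is immediate from $K \subseteq \Sigma_{+,\sigma}$. Given $p \in J_{\mathrm{Min}}^-(A) \setminus \mathcal{C}$, I would fix $q \in A$ with $p \in J_{\mathrm{Min}}^-(q)$ and consider the straight spacetime segment $\gamma$ from $p$ to $q$, which is $\gm$-future-directed causal because $|x_p - x_q| \leq \sigma - t_p$. If $\gamma$ stays in $\{|x| \geq 1\}$, then $\gamma$ is $g$-causal and $p \leq q \in \Sigma_{+,\sigma}$. Otherwise let $p_1 = (t_1,y_1) \in \partial\mathcal{C}$ be the first point at which $\gamma$ enters the open cylinder $\mathcal{C}$; the $p$-to-$p_1$ piece of $\gamma$ lies in $\{|x| \geq 1\}$ and hence is $g$-causal, so $p \leq p_1$. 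Using $p_1 \in J_{\mathrm{Min}}^-(q)$, $|y_1| = 1$ and $|x_q| \geq R$, the triangle inequality gives
\[
t_1 \leq \sigma - |y_1 - x_q| \leq \sigma - (|x_q|-1) \leq \sigma - (R-1) \leq \sigma - 2\pi,
\]
so by Lemma \ref{lemma_c_bd}, $p_1 \leq (\sigma,1,0) \in K$, hence $p \in J^-(K)$.

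For $\subseteq$, take $p \in J^-(\Sigma_{+,\sigma})$, witnessed by a $g$-future-directed causal curve $\gamma$ from $p$ to some $q \in \Sigma_{+,\sigma}$. If $q \in K$, done. Otherwise $|x_q| > R$, so $q \in A$ and $q \notin \overline{\mathcal{C}}$. If $\gamma \subseteq \{|x| \geq 1\}$, then $p \notin \mathcal{C}$ and $\gamma$ is $\gm$-causal, placing $p$ in $J_{\mathrm{Min}}^-(q) \setminus \mathcal{C} \subseteq J_{\mathrm{Min}}^-(A) \setminus \mathcal{C}$. Otherwise let $p_1 \in \partial\mathcal{C}$ be the last point of $\gamma$ lying in $\overline{\mathcal{C}}$ (well-defined because the relevant preimage in $[0,1]$ is closed and does not contain the endpoint parameter, since $q \notin \overline{\mathcal{C}}$); the $p_1$-to-$q$ piece then lies in $\{|x| > 1\}$, is $\gm$-causal, and places $p_1$ in $J_{\mathrm{Min}}^-(q)$. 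The identical computation gives $t(p_1) \leq \sigma - 2\pi$, and Lemma \ref{lemma_c_bd} again yields $p_1 \leq (\sigma,1,0) \in K$, so $p \in J^-(K)$.

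I do not anticipate any genuine obstacle: the only care needed is the standard topological definition of the first-entry and last-exit points of $\gamma$ on $\partial \mathcal{C}$, together with careful bookkeeping of which portions of $\gamma$ lie in $\{|x| \geq 1\}$ so that the distinction between $g$- and $\gm$-causality disappears there.
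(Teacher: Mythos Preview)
Your proof is correct and follows essentially the same approach as the paper: both inclusions are handled by tracking where the causal curve first or last meets $\partial\mathcal{C}$, bounding the time coordinate there via $R-1\ge 2\pi$, and then invoking Lemma~\ref{lemma_c_bd} to connect that boundary point to $(\sigma,e_1)\in K$. The only notable difference is that in the $\supseteq$ direction you work with the straight Minkowski segment and a single entry point, which lets you bound the entry time directly and use only the first half of Lemma~\ref{lemma_c_bd}; the paper instead tracks both the first and last intersection points and appeals to the second half of that lemma to control the far one, which is slightly less direct but equivalent.
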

\begin{proof}
Let $p \in J^-(\Sigma_{+,\sigma})$
and let us show that $p \in (J_\Min^-(A) \setminus \mathcal C) \cup J^-(K)$.
There is $q \in \Sigma_{+,\sigma}$ and a past-directed causal curve $\gamma$ from $q$ to $p$. If $q \in K$, then $p \in J^-(K)$. Now suppose that $q \in A$.
If $\gamma$ does not intersect $\mathcal C$, 
then $\gamma$ is causal with respect to $\gm$ and 
$p \in J_\Min^-(A) \setminus \mathcal C$.
Suppose now that $\gamma$ intersects $\mathcal C$ and let $\tilde p = (t_0, x_0) \in \p \mathcal C$ be the first intersection point with $\p \mathcal C$. 
There holds $t_0 \leq \sigma - (R-1) \leq \sigma - 2 \pi$, as $\gamma$ is causal with respect to $\gm$ outside $\mathcal C$ and $R \geq 2 \pi + 1$. By Lemma \ref{lemma_c_bd} one has $\tilde{p} \leq \tilde{q}$ for $\tilde{q} = (\sigma, e_1)$. Thus $p \le \tilde p \le \tilde q$. Since $\tilde{q} \in K$, we have $p \in J^-(K)$.

Let now $p \in (J_\Min^-(A) \setminus \mathcal C) \cup J^-(K)$
and let us show that $p \in J^-(\Sigma_{+,\sigma})$. Observe that 
\[
J^-(\Sigma_{+,\sigma}) = J^-(A \cup K) = J^-(A) \cup J^-(K).
\]
If $p \in J^-(K)$, then $p \in J^-(\Sigma_{+,\sigma})$. On the other hand, suppose that 
$p \in J_\Min^-(A) \setminus \mathcal C$.
There is $q \in A$ and a curve $\gamma$ from $q$ to $p$ that is causal with respect to $\gm$. 
If $\gamma$ does not intersect $\mathcal C$, then $\gamma$ is causal with respect to $g$ and $p \in J^-(A)$.
Suppose now that $\gamma$ intersects $\mathcal C$, let $p_1 = (t_1,x_1)$ be the first intersection point with $\p \mathcal C$,  and let $p_2 = (t_2, x_2) \in \p \mathcal C$ be the last intersection point with $\p \mathcal C$. One has $t_1 \leq \sigma - (R-1) \leq \sigma - 2\pi$. Moreover, since $p_2 \leq p_1$, Lemma \ref{lemma_c_bd} implies that $t_2 < t_1 + \pi \leq \sigma - \pi$. Using Lemma \ref{lemma_c_bd} again we see that $p_2 \leq q$ where $q = (\sigma, e_1)$. Thus $p_2 \in J^-(K)$. Moreover, $p \le p_2$ since the segment of $\gamma$ from $p_2$ to $p$ is outside $\mathcal C$ due to $p$ being outside $\mathcal C$.
\end{proof}

\begin{Lemma}\label{lem_cl_bdd}
Let $g$ satisfy \eqref{metric_a1}--\eqref{metric_a2} and let $\sigma \in \R$.
The set $J^-(\Sigma_{+,\sigma})$ is closed, and 
for any $z \in \Sigma_-$ there is $r > 0$ such that 
$z + r \bar{e}_0 \not\in J^-(\Sigma_{+,\sigma})$ where $\bar{e}_0 = (1,0) \in \mR^{1+n}$. 
\end{Lemma}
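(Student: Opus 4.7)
The plan is to invoke the decomposition in Lemma \ref{near_far_J}. I would fix any $R \ge 2\pi + 1$ and set
\[
A = \Sigma_{+,\sigma} \cap \{|x| \ge R\}, \qquad K = \Sigma_{+,\sigma} \cap \{|x| \le R\},
\]
so that $J^-(\Sigma_{+,\sigma}) = (J_{\Min}^-(A) \setminus \mathcal{C}) \cup J^-(K)$, and then verify each piece is closed and handle the two cases separately for the non-containment statement.

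For closedness, I would argue that each piece on the right is closed. The set $J^-(K)$ is closed because $(\mR^{1+n},g)$ is globally hyperbolic and $K$ is compact, which is the standard causal simplicity of globally hyperbolic spacetimes (a consequence of \eqref{metric_a2} via Proposition \ref{prop_gh}). For $J_\Min^-(A)$, a direct sequential argument in Minkowski works: if $p_n = (t_n,x_n) \to p = (t,x)$ with $p_n \in J_\Min^-(\{q_n\})$ for some $q_n = (\sigma, y_n)$, $|y_n| \ge R$, then $|y_n| \le |y_n - x_n| + |x_n| \le (\sigma - t_n) + |x_n|$ stays bounded, so a subsequence of $(y_n)$ converges to some $y$ with $|y| \ge R$; passing to the limit gives $p \in J_\Min^-(A)$. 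Since $\mathcal{C}$ is open, $J_\Min^-(A) \setminus \mathcal{C}$ is closed, and the union of two closed sets yields the first claim.

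For the second statement, write $z = (t_0,-1,x') \in \Sigma_-$ and $z_r = z + r\bar{e}_0 = (t_0+r,-1,x')$, and note that the spatial part has Euclidean norm $\sqrt{1+|x'|^2} \ge 1$, so $z_r \notin \mathcal{C}$ for all $r$. If $z_r \in J^-(\Sigma_{+,\sigma})$, then by the decomposition one of two cases must hold. In the first case $z_r \in J_\Min^-(A)$; since $A \subset \{t = \sigma\}$, this forces $t_0 + r \le \sigma$, which fails once $r > \sigma - t_0$. In the second case $z_r \in J^-(K)$, giving a future-directed causal curve from $z_r$ to some $p \in K$; since $\tau$ is strictly increasing along such curves, $\tau(z_r) < \max_K \tau =: M < \infty$ by compactness of $K$. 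However, because $|(-1,x')| \ge 1$, Lemma \ref{lemma_taur_ext} tells us that $t \mapsto \tau(t,-1,x')$ is strictly increasing and bijective from $\mR$ onto $\mR$ (as the inverse of $r \mapsto \alpha(r,-1,x')$); in particular $\tau(z_r) \to +\infty$ as $r \to +\infty$, contradicting $\tau(z_r) < M$ for large $r$. Thus $z_r \notin J^-(\Sigma_{+,\sigma})$ for sufficiently large $r$.

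I do not expect any serious analytic obstacle: the main care needed is bookkeeping the decomposition and remembering that ``causal past'' refers to the Minkowski order on the $A$-piece, which is legitimate since $J_\Min^-(A)\setminus \mathcal{C}$ lies outside $\mathcal{C}$ where $g=\gm$. All remaining ingredients are furnished by Proposition \ref{prop_tau_proper}, Lemma \ref{lemma_taur_ext}, Lemma \ref{near_far_J}, and the standard closedness of $J^-(K)$ for compact $K$ in globally hyperbolic spacetimes.
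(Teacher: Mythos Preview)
Your proof is correct. The closedness argument matches the paper's exactly (Lemma~\ref{near_far_J} plus closedness of $J^-(K)$ for compact $K$ and an elementary Minkowski computation for $J_\Min^-(A)$).

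For the second statement your route differs from the paper's. The paper observes that $z + r\bar e_0 \in J^+(z)$ for $r>0$ and then bounds $J^+(z)\cap J^-(K)$ inside a compact causal diamond $J^+(K\cup\{z\})\cap J^-(K\cup\{z\})$, obtaining a uniform $t$-bound directly from global hyperbolicity. You instead use that the spatial part of $z_r$ stays on $\partial B$, so Lemma~\ref{lemma_taur_ext} (or equivalently the monotonicity and surjectivity of $t\mapsto\tau(t,-1,x')$) forces $\tau(z_r)\to+\infty$, contradicting $\tau(z_r)<\max_K\tau$. Your argument is more elementary in that it avoids the causal-diamond compactness result, but it exploits the specific feature $z\in\Sigma_-\subset\{|x|=1\}$; the paper's argument would apply verbatim to any point $z$, not just those on $\Sigma_-$. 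Both are perfectly adequate here.
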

\begin{proof}
Let $R \geq 2 \pi + 1$ and let $A$ and $K$ be as in Lemma \ref{near_far_J}.
Due to the properties of the Minkowski geometry, $J_\Min^-(A)$ is closed. 
The set $J^-(K)$ is closed as $K$ is compact, see \cite[Theorem 2.1]{hounnonkpe2019}.  
As $\mathcal C$ is open, the closedness of $J^-(\Sigma_{+,\sigma})$ follows from Lemma \ref{near_far_J}. 

If $z \in \Sigma_-$, the future $J^+(z)$ is closed by \cite[Theorem 2.1]{hounnonkpe2019}, so $J^+(z) \cap J^-(K)$ is also closed. Moreover, one has 
\[
J^+(z) \cap J^-(K) \subset J^+(K \cup \{z\}) \cap J^-(K \cup \{z\})
\]
and the latter set is compact \cite[Proposition 2.3]{hounnonkpe2019}. Thus $J^+(z) \cap J^-(K)$ is compact, and there is $T > 0$ such that
    \begin{align}
 J^+(z) \cap J^-(K) \subset \{t \le T\}.
    \end{align}
We see that $z + r \bar{e}_0 \not\in J_\Min^-(A) \cup J^-(K)$ for $r \gg 0$.
\end{proof}

\begin{Lemma}\label{lem_bd_null}
The boundary of $J^-(\Sigma_{+,\sigma})$, $\sigma \in \R$, is covered by normal null geodesics from $\Sigma_{+,\sigma}$.
\end{Lemma}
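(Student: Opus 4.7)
\emph{Proof plan.} The plan is to combine the standard result from Lorentzian causality theory that boundaries of causal pasts of closed sets are ruled by null geodesics with a first-variation argument at the meeting point with $\Sigma_{+,\sigma}$ to obtain orthogonality. Let $p \in \p J^-(\Sigma_{+,\sigma})$. If $p \in \Sigma_{+,\sigma}$ there is nothing to prove, so assume $p \notin \Sigma_{+,\sigma}$. By Lemma \ref{lem_cl_bdd}, $J^-(\Sigma_{+,\sigma})$ is closed, hence $p \in J^-(\Sigma_{+,\sigma})$; since $I^-(\Sigma_{+,\sigma})$ is open and contained in the interior of $J^-(\Sigma_{+,\sigma})$, necessarily $p \notin I^-(\Sigma_{+,\sigma})$.

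The first substantial step is to produce a future-directed null geodesic $\gamma$ from $p$ to some $q \in \Sigma_{+,\sigma}$ lying in $\p J^-(\Sigma_{+,\sigma})$. I would choose $p_n \to p$ with $p_n \notin J^-(\Sigma_{+,\sigma})$ and $p_n' \to p$ with $p_n' \in I^+(p_n) \cap J^-(\Sigma_{+,\sigma})$; such sequences exist because $p$ lies in the closed set $J^-(\Sigma_{+,\sigma})$ while every neighbourhood of $p$ contains points outside this set. For each $n$ pick a future-directed causal curve $\gamma_n$ from $p_n'$ to some $q_n \in \Sigma_{+,\sigma}$. Global hyperbolicity together with the compactness furnished by Lemma \ref{near_far_J}---which decomposes $J^-(\Sigma_{+,\sigma})$ into an explicit Minkowski piece outside the cylinder $\mathcal C$ and a piece $J^-(K)$ for compact $K \subset \Sigma_{+,\sigma}$ whose intersection with $J^+(p)$ is compact---lets one apply the limit-curve lemma in globally hyperbolic spacetimes to extract a future-directed causal limit curve $\gamma$ from $p$ to some $q \in \Sigma_{+,\sigma}$. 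Because $p_n \notin J^-(\Sigma_{+,\sigma})$, any timelike segment of $\gamma$ could be perturbed to yield a causal curve from $p_n$ to $\Sigma_{+,\sigma}$ for large $n$, a contradiction; thus $\gamma$ is achronal, and being a causal achronal curve it must be a null pregeodesic, which I reparametrize as a null geodesic.

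Finally I would verify that the tangent $v = \dot\gamma$ at the endpoint $q$ is $g$-orthogonal to $\Sigma_{+,\sigma}$. Since $\Sigma_{+,\sigma}$ is spacelike of codimension two, decomposing $v = v^T + v^\perp$ with $v^T \in T_q \Sigma_{+,\sigma}$ and $v^\perp$ normal gives, using $g(v,v) = 0$ and $g(v^T,v^\perp) = 0$, the identity $g(v^\perp,v^\perp) = -g(v^T,v^T) \le 0$, with strict inequality unless $v^T = 0$. If $v^T \neq 0$, a standard first-variation argument in the spirit of \cite[Proposition 10.46]{oneill1983} simultaneously perturbs the endpoint of $\gamma$ along $\Sigma_{+,\sigma}$ in direction $v^T$ and its initial direction, producing a past-directed timelike curve from a nearby point $q' \in \Sigma_{+,\sigma}$ to $p$; this contradicts $p \notin I^-(\Sigma_{+,\sigma})$. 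Hence $v^T = 0$, so $\gamma$ is normal to $\Sigma_{+,\sigma}$ at $q$.

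The main obstacle is the limit-curve construction: although classical in globally hyperbolic settings, it requires the sequence $(\gamma_n)$ to remain in a compact region of $\mR^{1+n}$, which is precisely what Lemma \ref{near_far_J} together with the Minkowski structure \eqref{metric_a1} outside $\mathcal C$ provides. The orthogonality step is routine once the null geodesic has been produced.
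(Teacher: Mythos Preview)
Your proposal is correct in outline but takes a substantially more elaborate route than the paper. The paper's proof is two lines: since $J^-(\Sigma_{+,\sigma})$ is closed (Lemma \ref{lem_cl_bdd}), one has $\p J^-(\Sigma_{+,\sigma}) = J^-(\Sigma_{+,\sigma}) \setminus I^-(\Sigma_{+,\sigma})$ by \cite[Lemma 6, p.\ 404]{oneill1983}, and then any causal curve from $p \in \p J^-(\Sigma_{+,\sigma})$ to $\Sigma_{+,\sigma}$ must be a normal null geodesic, since otherwise \cite[Theorem 51, p.\ 298]{oneill1983} furnishes a timelike curve from $p$ to $\Sigma_{+,\sigma}$, contradicting $p \notin I^-(\Sigma_{+,\sigma})$. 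That single citation packages both the ``causal $\Rightarrow$ null geodesic'' step and the orthogonality at the endpoint.

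Your argument rederives this result by hand. Two remarks. First, the entire limit-curve construction is superfluous: once you know $J^-(\Sigma_{+,\sigma})$ is closed, the boundary point $p$ already lies in $J^-(\Sigma_{+,\sigma})$, so a causal curve from $p$ to some $q \in \Sigma_{+,\sigma}$ exists outright---no sequences $p_n, p_n'$ or compactness from Lemma \ref{near_far_J} are needed. (Incidentally, the existence of $p_n' \in I^+(p_n) \cap J^-(\Sigma_{+,\sigma})$ with $p_n' \to p$ is asserted but not justified; it is fixable but unnecessary.) Second, once you have that causal curve, your achronality argument and first-variation argument for orthogonality are exactly the content of O'Neill's Theorem 51, so you are essentially reproving that theorem. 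This is fine if self-containment is the goal, but citing it is cleaner.
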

\begin{proof}
By Lemma \ref{lem_cl_bdd} and \cite[Lemma 6, p.\ 404]{oneill1983} applied to $J^-(\Sigma_{+,\sigma})$, we have 
    \begin{align}
\p J^-(\Sigma_{+,\sigma}) = J^-(\Sigma_{+,\sigma}) \setminus I^-(\Sigma_{+,\sigma}),
    \end{align}
where $I^-(\Sigma_{+,\sigma})$ is the chronological past of $\Sigma_{+,\sigma}$, that is, the set of points $p \in M$ such that there is a future-directed timelike curve from $p$ to a point in $\Sigma_{+,\sigma}$.
Let $p \in \p J^-(\Sigma_{+,\sigma})$. Then there is a causal curve from $p$ to $\Sigma_{+,\sigma}$. This curve is a normal null geodesic, since otherwise we get the contradiction $p \in I^-(\Sigma_{+,\sigma})$ from  \cite[Theorem 51, p.\ 298]{oneill1983}. 
\end{proof}

\begin{proof}[Proof of Proposition \ref{prop_jminus}]
Observe that the past-directed normal null geodesics from $\Sigma_{+,\sigma}$ are the geodesics starting in direction $c(-1,\pm e_1) \in \R^{1+n}$ for $c > 0$. By \eqref{metric_a1} the geodesics in direction $(-1,e_1)$ never reach $\Sigma_-$. Thus Lemma \ref{lem_bd_null} ensures that $\p J^-(\Sigma_{+,\sigma}) \cap \Sigma_-$ is covered by geodesics starting from $\Sigma_{+,\sigma}$ in direction $-c(1,e_1)$. In view of Proposition \ref{prop_nontrapping} such geodesics meet $\Sigma_-$ only in the set $\Sigma_{-,\sigma-2}$. Thus we have 
    \begin{align}\label{upper}
\p J^-(\Sigma_{+,\sigma}) \cap \Sigma_- \subset \Sigma_{-,\sigma-2}.
    \end{align}
Let $z \in \Sigma_{-,\sigma-2}$. By Proposition \ref{prop_nontrapping} we have $z \in J^-(\Sigma_{+,\sigma})$.
Consider the curve $\mu(r) = z + r (1,0)$.
If $r \le 0$, then $\mu(r) \le z$ and therefore $\mu(r) \in J^-(\Sigma_{+,\sigma})$.
On the other hand, let $r > 0$. 
By Lemma \ref{lem_cl_bdd}, there is $R > 0$ such that $\mu(R) \not\in J^-(\Sigma_{+,\sigma})$. In view of \eqref{upper}, the curve $\mu$ can intersect $\p J^-(\Sigma_{+,\sigma})$ at most at $\mu(0)$. Hence the set $\{ r > 0 \,:\, \mu(r) \notin \ol{J^-(\Sigma_{+,\sigma})} \}$ is a nonempty open and closed subset of $\{ r > 0 \}$. By connectedness $\mu(r) \not\in J^-(\Sigma_{+,\sigma})$ when $r > 0$.
\end{proof}

The final proposition needed for the proof of Theorem \ref{thm_geometric} states that $\Phi$ is a local diffeomorphism.

\begin{Proposition} \label{prop_phi_local_diffeo}
Let $g$ satisfy \eqref{metric_a1}--\eqref{metric_a2}, let $\lambda \in C^{\infty}(\Sigma_+)$ be positive, and suppose that for any $\sigma \in \mR$ one has 
\[
\beta_g(\Sigma_{-,\sigma} \setminus \mathcal{T}_g) = \beta_{\gm}^{\lambda}(\Sigma_{-,\sigma}).
\]
Then the derivative of $\Phi: \Sigma_- \times \mR \to \mR^{1+n}$, $\Phi(z,r) = \gamma_z(r)$ is invertible everywhere.
\end{Proposition}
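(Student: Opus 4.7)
The plan is to argue by contradiction. Suppose $D\Phi(z_0,r_0)$ is singular at some $(z_0,r_0) \in \Sigma_- \times \mR$, and set $\sigma = t(z_0)+2$, so that $z_0 \in \Sigma_{-,\sigma-2}$. The goal is to produce, from the singular direction, a focal point of $\Sigma_{-,\sigma-2}$ along $\gamma_{z_0}$ and then invoke the classical focal point theorem to obtain a timelike curve from $\Sigma_{-,\sigma-2}$ to $\Sigma_{+,\sigma}$, contradicting Proposition~\ref{prop_jminus}. As a preliminary, I would localize the singularity to $0 < r_0 < r_+(z_0)$: for $r \leq 0$ the geodesic $\gamma_{z_0}$ lies in $\{x_1 \leq -1\}$ and $\Phi$ reduces to the affine map $(z,r)\mapsto z + r(1,e_1)$, while for $r \geq r_+(z)$ Proposition~\ref{prop_nontrapping} and \eqref{metric_a1} give the factorization $\Phi(z,r) = \gamma_z(r_+(z)) + (r - r_+(z))\lambda(\gamma_z(r_+(z)))(1,e_1)$; combined with the fact that $z \mapsto \gamma_z(r_+(z))$ is a diffeomorphism $\Sigma_- \to \Sigma_+$ and that $(1,e_1)$ is transverse to $T\Sigma_+$, this forces $D\Phi$ to be invertible in both regions.

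From $(V_0,a) \in \ker D\Phi(z_0,r_0) \setminus \{0\}$ I would extract the Jacobi field $J$ along $\gamma_{z_0}$ with $J(0) = V_0 \in T_{z_0}\Sigma_-$, $J'(0) = 0$ (since $(1,e_1)$ is covariantly constant in Cartesian coordinates at $z_0$, where $g = \gm$), and $J(r_0) = -a\,\dot\gamma_{z_0}(r_0)$. Decomposing $V_0 = c\partial_t + W_0$ with $W_0 \in T_{z_0}\Sigma_{-,\sigma-2}$, the crucial observation is a null conservation law: the Jacobi equation and the curvature symmetries give $\frac{d}{dr}g(J',\dot\gamma_{z_0}) = -g(R(J,\dot\gamma_{z_0})\dot\gamma_{z_0},\dot\gamma_{z_0}) = 0$, so $g(J',\dot\gamma_{z_0}) \equiv 0$ and therefore $g(J,\dot\gamma_{z_0})$ is constant along $\gamma_{z_0}$. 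Evaluating at $r=0$ this constant equals $g(c\partial_t + W_0,\partial_t + \partial_{x_1}) = -c$ (as $W_0$ is $\gm$-orthogonal to both $\partial_t$ and $\partial_{x_1}$), while at $r = r_0$ it equals $-a\,g(\dot\gamma_{z_0}(r_0),\dot\gamma_{z_0}(r_0)) = 0$ since $\dot\gamma_{z_0}$ is null. Therefore $c = 0$, and $V_0 = W_0 \in T_{z_0}\Sigma_{-,\sigma-2}$ is nonzero.

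Since $\Sigma_{-,\sigma-2}$ is a flat affine subspace lying in the Minkowski region, its second fundamental form vanishes at $z_0$, so the conditions $J(0) \in T_{z_0}\Sigma_{-,\sigma-2}$ and $J'(0) = 0$ exhibit $J$ as a $\Sigma_{-,\sigma-2}$-Jacobi field, and $J(r_0) \in \mathrm{span}(\dot\gamma_{z_0}(r_0))$ identifies $\gamma_{z_0}(r_0)$ as a focal point. Applying the classical focal point theorem for null geodesics issued normally from spacelike codimension-two submanifolds (e.g., O'Neill, \emph{Semi-Riemannian Geometry}, Proposition~10.48) with $b = r_+(z_0) > r_0$ yields $z_1 \in \Sigma_{-,\sigma-2}$ admitting a future-directed timelike curve to $\gamma_{z_0}(r_+(z_0)) \in \Sigma_{+,\sigma}$. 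By openness of $I^+(z_1)$, one can slightly decrease the $t$-coordinate of the endpoint on $\Sigma_+$ to obtain $q' \in I^+(z_1) \cap \Sigma_+$ with $t(q') < \sigma$; then $z_1 \in J^-(\Sigma_{+,t(q')}) \cap \Sigma_-$, and Proposition~\ref{prop_jminus} forces $t(z_1) \leq t(q') - 2 < \sigma - 2$, contradicting $t(z_1) = \sigma - 2$.

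The main obstacle is the step that pins $V_0$ to $T_{z_0}\Sigma_{-,\sigma-2}$: without it the Jacobi field arising from a generic singular direction need not be a $P$-Jacobi field, and the focal point machinery cannot be invoked. The null conservation law for $g(J,\dot\gamma_{z_0})$, combined with the fact that $\dot\gamma_{z_0}(r_0)$ is null, is exactly what eliminates any $\partial_t$ component in $V_0$.
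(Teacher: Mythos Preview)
Your argument is correct and follows essentially the same route as the paper: both prove that $g(J,\dot\gamma)$ is constant along the null geodesic (the paper computes $g(\partial_t\Phi,\dot\gamma)$ and $g(\partial_{y_j}\Phi,\dot\gamma)$ separately, you bundle them into a single Jacobi field and use $J'(0)=0$), conclude that the singular direction has no $\partial_t$-component, recognise a focal point of $\Sigma_{-,\sigma-2}$ before $r_+(z_0)$, and contradict Proposition~\ref{prop_jminus} via O'Neill's timelike-shortcut result. One small point of precision: O'Neill's focal-point definition requires a $P$-Jacobi field with $J(r_0)=0$, not merely $J(r_0)\in\mathrm{span}(\dot\gamma)$; replace $J$ by $\tilde J(r)=J(r)+\tfrac{a}{r_0}\,r\,\dot\gamma(r)$, which is still a $\Sigma_{-,\sigma-2}$-Jacobi field (since $\tilde J'(0)=\tfrac{a}{r_0}(1,e_1)$ is normal to $\Sigma_{-,\sigma-2}$ and the submanifold is totally geodesic) and now satisfies $\tilde J(r_0)=0$ with $\tilde J(0)=W_0\neq 0$.
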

\begin{proof}
We write $z = (t,-1,y) \in \Sigma_-$ with $y \in \mR^{n-1}$ and $\Phi(z,r) = \Phi(t,y,r)$. To get a contradiction, suppose that $\Phi$ is singular at $(z_*, r_*)  \in \Sigma_- \times \mR$ 
with $z_* = (t_*, -1, y_*)$, that is, 
there is a direction $(\delta t, \delta y, \delta r) \in \R^{1 + n}$ such that at $(z_*, r_*)$ there holds
    \begin{align}\label{singular_pt_phi}
\p_t \Phi \delta t + \p_{y_j} \Phi \delta y_j + \p_r \Phi \delta r = 0. 
    \end{align}

Define the vector field $J(r) = \p_t \Phi(z_*, r)$ along $\gamma(r) = \gamma_{z_*}(r)$. Since $D_r \p_t = D_t \p_r$ where $D$ denotes the covariant derivative and since $\gamma$ is a null geodesic, we have  
\[
\p_r (g(J, \dot{\gamma})) = g(D_r \p_t \Phi, \dot{\gamma}) + g(J, D_r \dot{\gamma}) = g(D_t \p_r \Phi, \dot{\gamma}) = \frac{1}{2} \p_t(g(\dot{\gamma}, \dot{\gamma})) = 0.
\]
Thus $g(J, \dot \gamma) = g(J, \dot \gamma)|_{r=0} = g(\p_t, N) = -1$ where $N = (1,e_1)$. A similar argument shows that $g(\p_{y_j} \Phi, \dot{\gamma}) = 0$.

Returning to \eqref{singular_pt_phi}, and using the fact that $\dot \gamma = \p_r \Phi$ is a null vector, we have
\[
0 = g(\p_t \Phi \delta t + \p_{y_j} \Phi \delta y_j + \p_r \Phi \delta r, \dot \gamma) = -\delta t. 
\]
Thus $\delta t = 0$, and the map 
\[
\Phi_* :  \mR^{n-1} \times \mR \to \mR^{1+n}, \quad \Phi_*(y,r) = \Phi((t_*, -1, y), r)
\]
is singular at $(y_*, r_*)$ in the sense that $D \Phi_*$ does not have full rank. 

Let us show that $r_* < r_+(z_*)$.
For $z = (t_*, -1, y)$, by Proposition \ref{prop_nontrapping} we can write $\gamma_z(r_+(z)) = (t_* + 2, 1, \psi_*(y))$ where $\psi_*$ is a diffeomorphism of $\mR^{n-1}$. Observe that if $r \ge r_+(t_*, -1, y)$, then the facts that $\dot{\gamma}_z(r_+(z)) = \lambda(1,1,0)$ and \eqref{metric_a1} imply that 
    \begin{align}
\Phi_*(y,r) = (t_* + 2, 1, \psi_*(y)) + \lambda(t_*+2, 1, \psi_*(y)) (r - r_+(t_*, -1, y)) (1, 1, 0).
    \end{align}
Thus for $r \ge r_+(t_*, -1, y)$ the derivative $D \Phi_*$ has the form 
    \begin{align}
\begin{pmatrix}
* & \lambda
\\
* & \lambda
\\
D_y \psi_* & 0
\end{pmatrix}.
    \end{align}
Regardless of the elements marked with $*$ that were not computed explicitly, the columns of $D \Phi_*$ are seen to be linearly independent since the columns of $D_y \psi_*$ are. Thus $r_* < r_+(z_*)$.

Observe that the null geodesic $\gamma$ is normal to the spacelike submanifold $\Sigma_{-,t_*}$. 
Moreover, $\gamma(r_*)$ is a focal point of $\Sigma_{-,t_*}$ along $\gamma$ since $\Phi_*$ is singular at $(y_*, r_*)$, see \cite[Proposition 30, p.\ 283]{oneill1983} (the normal exponential map to $\Sigma_{-,t_*}$ satisfies $D \exp(\delta y, N \delta r) = D \Phi_*(\delta y, \delta r)$). By \cite[Proposition 48, p.\ 296]{oneill1983} there is a timelike curve from some point $z_- \in \Sigma_{-,t_*}$ to the point $\gamma(r_+(z_*)) \in \Sigma_{+,t_* + 2}$.
But then $z_-$ is in the interior of $J^-(\Sigma_{+,t_* + 2}) \cap \Sigma_-$, see \cite[Lemma 6, p.\ 404]{oneill1983}.
This is a contradiction since $z_-$ is on the boundary of $J^-(\Sigma_{+,t_* + 2}) \cap \Sigma_-$ according to Proposition \ref{prop_jminus}.
We have shown that $D\Phi$ is invertible everywhere.
\end{proof}

\begin{proof}[Proof of Theorem \ref{thm_geometric}]
Proposition \ref{prop_nontrapping} ensures that $\Phi$ is defined on $\Sigma_- \times \mR$. By the Hadamard global inverse function theorem (Theorem \ref{thm_hadamard}) and Proposition \ref{prop_phi_local_diffeo}, to see that $\Phi: \Sigma_- \times \mR \to \mR^{1+n}$ is a diffeomorphism it is enough to show that it is proper. Let $K \subset \mR^{1+n}$ be compact, so there is $R > 1$ such that 
\begin{equation} \label{geom_k_bound}
K \subset \{ (t,x_1,y) \,:\, |t| \leq R, \, |x_1| \leq R, \, |y| \leq R \}.
\end{equation}
Since the Cauchy temporal function $\tau$ is smooth, there is $T > 0$ such that 
\begin{equation} \label{geom_tau_bound}
|\tau| \leq T \text{ in $K$.}
\end{equation}
Moreover, since the function $\alpha$ in Lemma \ref{lemma_taur_ext} is smooth in $\{ |x| \geq 1 \}$, there is $A > 0$ such that 
\begin{equation} \label{geom_alpha_bound}
\sup_{ |y| \leq R } \alpha(T, (-1,y)) \leq A, \qquad \inf_{|y| \leq R} \alpha(-T, (1,y)) \geq -A.
\end{equation}

To show that $\Phi$ is proper, it is enough to show that there is $C = C(K)> 0$ such that for any $(z,r)$ with $\Phi(z,r) = \gamma_z(r) \in K$ where $z = (t, -1, y)$, one has 
\[
|t| \leq C, \qquad |y| \leq C, \qquad |r| \leq C.
\]
We fix $(z,r)$ with $\gamma_z(r) \in K$. First note that by \eqref{metric_a1} any geodesic $\gamma_{\tilde{z}}$ which begins in $\{ |\tilde{y}| > R \}$ stays forever in $\{ |\tilde{y}| > R \}$. Thus we have  
\[
|y| \leq R.
\]

Next we give a bound for $t$. If $r \leq 0$, then \eqref{metric_a1} gives the identity $\gamma_{z}(r) = z + r(1,e_1)$. Together with \eqref{geom_k_bound} this implies that $|t + r| \leq R$ and $|-1+r| \leq R$. Thus 
\begin{equation} \label{geom_rt_bound}
|r| \leq R+1 \quad \text{ and } \quad |t| \leq 2R + 1.
\end{equation}
Similarly, if $r \geq r_+(z)$, then $\gamma_{z}(r) = \gamma_{z}(r_+(z)) + \lambda(\gamma_z(r_+(z))) (r-r_+(z))(1,e_1)$ by \eqref{metric_a1} and the fact that $\dot{\gamma}_z(r_+(z)) = \lambda(1,e_1)$. Proposition \ref{prop_nontrapping} ensures that $\gamma_{z}(r_+(z)) \in \Sigma_{+,t+2}$. Using \eqref{geom_k_bound} we get $|t+2 + \lambda(r-r_+(z))| \leq R$ and $|1+\lambda(r-r_+(z))| \leq R$. This implies 
\begin{equation} \label{geom_rmt_bound}
|\lambda(r-r_+(z))| \leq R+1  \quad \text{ and } \quad  |t| \leq 2R + 3.
\end{equation}
Finally, if $0 < r < r_+(z)$, the fact that $\tau$ is increasing along $\gamma_{z}$ implies that 
\[
\tau(z) \leq \tau(\gamma_{z}(r)) \leq \tau(\gamma_{z}(r_+(z))).
\]
By \eqref{geom_tau_bound} we have $|\tau(\gamma_{z}(r))| \leq T$. In particular $\tau(z) \leq T$. By \eqref{geom_alpha_bound} we have 
\[
t = \alpha(\tau(z), (-1,y)) \leq \alpha(T, (-1,y)) \leq A.
\]
On the other hand, by \eqref{geom_tau_bound} we have $-T \leq \tau(\gamma_{z}(r)) \leq \tau(\gamma_{z}(r_+(z)))$. Since $\gamma_{z}(r_+(z))$ is of the form $(t+2,1,\tilde{y})$ where $|\tilde{y}| \leq C$, \eqref{geom_alpha_bound} gives  
\[
t+2 = \alpha(\tau(\gamma_{z}(r_+(z))), (1,\tilde{y})) \geq \alpha(-T, (1,\tilde{y})) \geq -A.
\]
All in all we have proved that for some $C = C(K)$ one has 
\[
|t| \leq C.
\]

It remains to give a bound for $r$. Since $|t| \leq C$ and $|y| \leq R$, continuity and compactness give that $r_+(z) \leq C$ for some $C= C(K)$ that may change from line to line. Since $\lambda$ is smooth and positive we also have 
\[
C^{-1} \leq \lambda(\gamma_z(r_+(z)) \leq C.
\]
Thus if $\Phi(z,r) \in K \cap \{ |x_1| \leq 1 \}$, then $0 \leq r \leq C$.  On the other hand, if $\Phi(z,r) \in K \cap \{ x_1 <  -1 \}$, then $r < 0$ and \eqref{geom_rt_bound} gives $|r| \leq R+1$. Similarly, if $\Phi(z,r) \in K \cap \{ x_1 > 1 \}$, then $r > r_+(z)$ and \eqref{geom_rmt_bound} gives $|\lambda r| \leq R+1+\lambda r_+ \leq C$, so $|r| \leq C$. This concludes the proof that $\Phi$ is proper and hence a diffeomorphism.

We already know from Proposition \ref{prop_nontrapping} that $\dot{\gamma}_z(r) = \lambda(\gamma_z(r)) (1,e_1)$ when $\gamma_z(r) \in \Sigma_{+}$, and $\dot{\gamma}_z(r) = (1,e_1)$ when $\gamma_z(r) \in \Sigma_{-}$. By \eqref{metric_a1} we have $\dot{\gamma}_z(r) = \hat{\lambda}(1,e_1)$ for some smooth positive $\hat{\lambda}$ whenever $\gamma_z(r) \notin \mR \times \ol{B}$. Moreover, $\hat{\lambda} = 1$ in $\{ x_1 \leq -1 \} \cup \{ |y| \geq 1 \}$, and $\hat{\lambda}(w + s(1,e_1)) = \hat{\lambda}(w)$ for $w \in \Sigma_+$ and $s \geq 0$.
\end{proof}


\section{The eikonal equation} \label{sec_eikonal}

In Theorem \ref{thm_geometric} the conclusion is a diffeomorphism property, which we now characterize in terms of solutions of eikonal equations. Let $\omega \in S^{n-1}$ and recall that $\Sigma_{-} = \{ (t,x) \in \mR^{1+n} \,:\, x \cdot \omega = -1 \}$. Given $z \in \Sigma_-$, let $\gamma_{z}(r)$ be the $g$-geodesic with $\gamma_{z}(0) = z$ and $\dot{\gamma}_{z}(0) = (1,\omega)$ defined in the maximal interval $(-\infty, \rho(z))$. Consider the map 
\[
\Phi: D \to \mR^{1+n}, \ \ \Phi(z, r) = \gamma_{z}(r),
\]
where $D = \{ (z,r) \,:\, z \in \Sigma_-, \, r < \rho(z) \}$ is the maximal domain of $\Phi$.

\begin{Proposition} \label{prop_eikonal}
Let $g$ be a smooth Lorentzian metric in $\mR^{1+n}$ satisfying \eqref{metric_a1}--\eqref{metric_a2}. Then the following conditions are equivalent.
\begin{enumerate}
\item[(a)] 
$\Phi: D \to \mR^{1+n}$ is a diffeomorphism.
\item[(b)]  
There is a smooth function $\varphi = \varphi_{\omega}$ in $\mR^{1+n}$  such that 
\[
g(d\varphi, d\varphi) = 0, \quad \varphi|_{\{x \cdot \omega \leq -1\}} = t - x \cdot \omega.
\]
\end{enumerate}
Moreover, if \emph{(a)} holds, then $D = \Sigma_- \times \mR$, $\varphi = t - x \cdot \omega$ outside $\mR \times B$, and 
\[
d\varphi(\gamma_z(r)) = -\dot{\gamma}_z(r)^{\flat}.
\]
In particular, $d\varphi$ is nowhere vanishing and $\varphi$ is constant along each $\gamma_z$. Moreover, if \emph{(b)} holds both for $\omega$ and $\tilde{\omega}$ and $d\varphi_{\omega}(z_0) = \lambda d\varphi_{\tilde{\omega}}(z_0)$ for some $z_0 \in \mR^{1+n}$ and some $\lambda > 0$, then $\omega = \tilde{\omega}$.
\end{Proposition}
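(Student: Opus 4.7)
I would prove the equivalence (a) $\Leftrightarrow$ (b) together with the additional clauses in the order sketched below.

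For \emph{(a) $\Rightarrow$ (b)}, define $\varphi \in C^\infty(\mR^{1+n})$ by $\varphi(\Phi(z,r)) = t(z) + 1$, where $t(z)$ is the time component of $z \in \Sigma_-$; smoothness is inherited from $\Phi^{-1}$. On $\{x \cdot \omega \leq -1\}$ each $\gamma_z$ is the straight line $z + r(1,\omega)$, so a direct calculation gives $\varphi = t - x \cdot \omega$ there. The crucial step is the identity $d\varphi(\gamma_z(r)) = -\dot\gamma_z(r)^\flat$ along each null geodesic. I would obtain this by computing both sides on the push-forward basis $\{\p_t \Phi, \p_{y_j}\Phi, \p_r\Phi\}$: the Jacobi-type identity $\p_r g(J, \dot\gamma_z) = g(D_t \p_r \Phi, \dot\gamma_z) = \tfrac{1}{2}\p_t g(\dot\gamma_z, \dot\gamma_z) = 0$ (exploiting symmetry of covariant differentiation and nullness of $\dot\gamma_z$) makes $g(J, \dot\gamma_z)$ constant in $r$ for the variation fields $J = \p_t\Phi$ and $J = \p_{y_j}\Phi$; evaluated at $r=0$ this equals $-1$ and $0$ respectively, matching $d\varphi$ applied to the same fields via the chain rule on $\varphi \circ \Phi = t(z) + 1$. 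The eikonal equation $g(d\varphi,d\varphi) = g(\dot\gamma_z, \dot\gamma_z) = 0$, constancy of $\varphi$ along $\gamma_z$, and nonvanishing of $d\varphi$ follow at once.

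For \emph{(b) $\Rightarrow$ (a)}, set $V = -(d\varphi)^\sharp$. Direct computation using symmetry of the Hessian of $\varphi$ and the eikonal equation gives $\nabla_V V = 0$ with $g(V,V) = 0$, so integral curves of $V$ are affinely parametrized null geodesics. The boundary condition forces $V = (1,\omega)$ on $\{x \cdot \omega \leq -1\}$, so the $V$-orbit through any $z \in \Sigma_-$ coincides with $\gamma_z$, i.e.\ $\Phi$ is the flow of $V$ from $\Sigma_-$. To upgrade this to a global diffeomorphism $\Phi: D \to \mR^{1+n}$ I would invoke the Hadamard inverse function theorem (Theorem \ref{thm_hadamard}), using the basis computation above for invertibility of $D\Phi$ and establishing properness in the style of the proof of Theorem \ref{thm_geometric}, combining \eqref{metric_a1} (Minkowski straight-line behavior outside $\mR \times \ol{B}$) with \eqref{metric_a2} (no finite-time escape). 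Under (a), $D = \Sigma_- \times \mR$ then follows since a $\gamma_z$ with finite maximal interval would have to leave every compact set, yet once $\gamma_z$ exits $\mR \times \ol{B}$ it is a Minkowski line extendible forever; and $\varphi = t - x \cdot \omega$ on $\{|x| \geq 1\}$ is obtained by exploiting constancy of $\varphi$ along each $\gamma_z$ together with the fact that $\dot\gamma_z$ retains direction $(1,\omega)$ (up to scaling) after re-exiting the cylinder.

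For the \emph{uniqueness of $\omega$}, if $d\varphi_\omega(z_0) = \lambda d\varphi_{\tilde\omega}(z_0)$ with $\lambda > 0$ then $V_\omega(z_0)$ and $V_{\tilde\omega}(z_0)$ are positively parallel. Since both vector fields have null-geodesic integral curves, these curves coincide up to reparametrization through $z_0$. By the already-established equivalence, (a) holds for both $\omega$ and $\tilde\omega$, so the common unparametrized curve, traced backward, reaches a point $p \in \{|x| \geq 1\}$ where $d\varphi_\omega(p) = dt - \omega \cdot dx$ and $d\varphi_{\tilde\omega}(p) = dt - \tilde\omega \cdot dx$; positive parallelism and the matching $dt$-coefficient force these to be equal, hence $\omega = \tilde\omega$. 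The most delicate technical step throughout is establishing the global diffeomorphism property of $\Phi$ in (b) $\Rightarrow$ (a): locally one has a flow and a null-geodesic correspondence, but promoting this to a global diffeomorphism of $\Sigma_- \times \mR$ onto $\mR^{1+n}$ requires a careful combination of the Minkowski-at-infinity hypothesis with global hyperbolicity, in the spirit of Section \ref{sec_geometric}; a related subtlety is extending $\varphi = t - x \cdot \omega$ to all of $\{|x| \geq 1\}$ when a given $\gamma_z$ may touch or re-enter $\mR \times \ol{B}$ between $z$ and the exterior point in question.
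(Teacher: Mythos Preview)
Your direction (a) $\Rightarrow$ (b) is a genuinely different and more elementary route than the paper's. The paper constructs $\varphi$ via Lagrangian manifolds: it takes the flowout $\Lambda$ of $\{(z,(1,-\omega)):z\in\Sigma_-\}$ under the bicharacteristic flow of $p(z,\zeta)=-\tfrac12 g_z(\zeta,\zeta)$, uses contractibility of $\Phi(D)$ to write $\iota^*\alpha = d\varphi$, and then invokes Lemma~\ref{lemma_bichar_geodesic} to identify $d\varphi = -\dot\gamma_z^\flat$. Your direct definition $\varphi(\Phi(z,r)) = t(z)+1$ together with the Jacobi-field computation from the proof of Proposition~\ref{prop_phi_local_diffeo} bypasses the symplectic machinery entirely and gives the same identity in one step; this is both shorter and more transparent.

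There is, however, a gap in your invertibility argument for (b) $\Rightarrow$ (a). The Jacobi computation you cite only yields $g(\partial_t\Phi,\dot\gamma_z)=-1$, $g(\partial_{y_j}\Phi,\dot\gamma_z)=0$, $g(\partial_r\Phi,\dot\gamma_z)=0$; applying this to a putative kernel vector gives $\delta t=0$ but does \emph{not} rule out $\sum_j \delta y_j\,\partial_{y_j}\Phi + \delta r\,\partial_r\Phi = 0$, which is precisely the focal-point scenario. The paper's argument is different and crucially uses the flow structure you have already set up: writing $\Phi(z,r)=\Psi_r(z)$ for the flow $\Psi_r$ of $V$, one has $D\Phi|_{(z,r)}(\dot z,\dot r) = D\Psi_r(z)(\dot z + \dot r\,V(z))$ via the flow identity $V\circ\Psi_r = D\Psi_r\circ V$, and then invertibility follows from $D\Psi_r$ being an isomorphism and $V(z)=(1,\omega)$ being transverse to $\Sigma_-$.

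Finally, your sketch for the additional clauses $D=\Sigma_-\times\mR$ and $\varphi = t-x\cdot\omega$ outside $\mR\times B$ underestimates the work involved. You assume each $\gamma_z$ eventually exits $\mR\times\ol B$ with tangent proportional to $(1,\omega)$, but neither statement is automatic under (a). The paper first analyzes $\dot\gamma_z$ at an arbitrary $w\in\Sigma_+$ (writing $\dot\gamma_z(r_0)=(\lambda_0,\lambda_1,v)$ and using that $\Phi$ is a bijection onto $\mR^{1+n}$ to force $v=0$ and then $\lambda_0=\lambda_1=1$), deduces $\varphi = t-x_1$ on $\Sigma_+$, and only then runs a topological open--closed argument to show the map $\kappa:\Sigma_+\to\Sigma_-$ is surjective, which is what finally gives that every $\gamma_z$ reaches $\Sigma_+$ and hence $D=\Sigma_-\times\mR$. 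Your self-identified ``delicate step'' is real, and the paper's resolution is substantially more intricate than ``once $\gamma_z$ exits it is a Minkowski line''.
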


In fact, for our main results we only need the implication (a) $\implies$ (b). This is a standard consequence of the geometric approach to solving eikonal equations via Lagrangian manifolds. For completeness we will give a detailed proof including the implication (b) $\implies$ (a) in Appendix \ref{sec_app}.

We will also need the standard fact that for null bicharacteristics $(z(r), \zeta(r))$ of $\Box_g$, the curves $z(r)$ are null geodesics, and that $\zeta(r) = d\varphi(z(r))$ if $\varphi$ is a suitable smooth solution of the eikonal equation. The proof is given in Appendix \ref{sec_app}. 

\begin{Lemma} \label{lemma_bichar_geodesic}
Let $g$ be a Lorentzian metric in $\mR^{1+n}$ and let $p(z,\zeta) = -\frac{1}{2} g_z(\zeta, \zeta) = -\frac{1}{2} g^{jk}(z) \zeta_j \zeta_k$. Given $(\bar{z}, \bar{\zeta}) \in T^* \mR^{1+n} \setminus 0$, the integral curve of the Hamilton vector field $H_p$ through $(\bar{z}, \bar{\zeta})$ is the curve $(z(r), \zeta(r))$ where $z(r)$ is the geodesic with $z(0) = \bar{z}$ and $\dot{z}(0) = -\bar{\zeta}^{\sharp}$, and $\zeta(r) = -\dot{z}(r)^{\flat}$.

Moreover, if $\varphi$ is a smooth solution of the eikonal equation 
\[
p(z, d\varphi(z)) = 0 \text{ near $z([a,b])$}
\]
and if $\zeta(r_0) = d\varphi(z(r_0))$ for some $r_0 \in [a,b]$, then $\zeta(r) = d\varphi(z(r))$ and $\varphi(z(r))$ is constant for all $r \in [a,b]$.
\end{Lemma}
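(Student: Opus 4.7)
The plan is to work in coordinates throughout and explicitly compute the Hamilton vector field $H_p = \sum_j (\partial_{\zeta_j} p) \partial_{z_j} - (\partial_{z_j} p) \partial_{\zeta_j}$. With $p(z,\zeta) = -\frac{1}{2} g^{jk}(z) \zeta_j \zeta_k$ we get
\[
\partial_{\zeta_j} p = -g^{jk}(z) \zeta_k, \qquad \partial_{z_j} p = -\tfrac{1}{2} (\partial_{z_j} g^{kl})(z)\, \zeta_k \zeta_l,
\]
so the Hamilton equations read $\dot z^j = -g^{jk} \zeta_k$ and $\dot\zeta_j = \tfrac{1}{2}(\partial_{z_j} g^{kl}) \zeta_k \zeta_l$. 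The first equation is exactly $\dot z = -\zeta^\sharp$, equivalently $\zeta = -\dot z^\flat$, which gives the stated initial condition $\dot z(0) = -\bar\zeta^\sharp$.

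To see that the projection $z(r)$ is a geodesic, I would differentiate $\zeta_j = -g_{jk}\dot z^k$ in $r$, plug into the second Hamilton equation, and use the standard identity $\partial_{z_j} g^{kl} = -g^{ka} g^{lb} \partial_{z_j} g_{ab}$ together with the symmetry of $\Gamma^m_{kl} = \tfrac12 g^{mn}(\partial_k g_{nl} + \partial_l g_{nk} - \partial_n g_{kl})$ to recover $\ddot z^m + \Gamma^m_{kl} \dot z^k \dot z^l = 0$. This is a standard musical-isomorphism calculation; I would present it compactly, since the sign is the only subtle point. (An alternative presentation is to note that $-p$ is the dual geodesic Hamiltonian, whose flow is reparametrization-reversed, matching our sign conventions.)

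For the second part I would use uniqueness of ODEs together with the differentiated eikonal equation. Differentiating $p(z, d\varphi(z)) = 0$ in $z_j$ yields the key identity
\[
\partial_{z_j} p(z, d\varphi(z)) + \partial^2_{z_j z_k} \varphi(z)\, \partial_{\zeta_k} p(z, d\varphi(z)) = 0.
\]
Let $\bar z(r)$ solve $\dot{\bar z}^k = \partial_{\zeta_k} p(\bar z, d\varphi(\bar z))$ with $\bar z(r_0) = z(r_0)$, and set $\bar\zeta(r) = d\varphi(\bar z(r))$. Differentiating $\bar\zeta$ and using the above identity gives $\dot{\bar\zeta}_j = -\partial_{z_j} p(\bar z, \bar\zeta)$, so $(\bar z, \bar\zeta)$ is an integral curve of $H_p$ with the same initial data as $(z,\zeta)$. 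By uniqueness of Hamiltonian flows, $\zeta(r) = d\varphi(z(r))$ on $[a,b]$.

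Finally, $\tfrac{d}{dr} \varphi(z(r)) = d\varphi(z(r))(\dot z(r)) = \zeta_j \dot z^j = -g^{jk}\zeta_j \zeta_k = 2 p(z(r),\zeta(r))$, and since $p$ is constant along its own Hamiltonian flow and equals $p(z(r_0), d\varphi(z(r_0))) = 0$ at $r_0$, it vanishes identically along the bicharacteristic, hence $\varphi \circ z$ is constant. I do not expect any serious obstacle: the only delicate point is tracking the sign convention $p = -\tfrac12 g^{jk}\zeta_j\zeta_k$ consistently in the Hamilton equations and the resulting identification $\dot z = -\zeta^\sharp$.
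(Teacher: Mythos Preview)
Your proposal is correct and follows essentially the same approach as the paper: write the Hamilton equations explicitly, identify $\dot z = -\zeta^\sharp$, verify the geodesic equation, then for the second part introduce an auxiliary curve $(\bar z, d\varphi(\bar z))$, use the differentiated eikonal identity to check it satisfies the Hamilton system, and invoke ODE uniqueness. The only cosmetic differences are that the paper shows $\nabla_{\dot z}\zeta = 0$ first (parallel transport of $\zeta$) and then deduces the geodesic equation via $\dot z = -\zeta^\sharp$, whereas you go straight to $\ddot z^m + \Gamma^m_{kl}\dot z^k\dot z^l = 0$; and for the constancy of $\varphi\circ z$ the paper cites Euler homogeneity while you use invariance of $p$ along its own flow---both amount to $2p = 0$.
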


We give some further properties of $\varphi$.

\begin{Lemma} \label{lemma_eikonal}
Let $g$ satisfy \eqref{metric_a1}, and let $\varphi = \varphi_{\omega}$ be as in Proposition \ref{prop_eikonal}. Then the following properties hold in $\mR^{1+n}$.
\begin{enumerate}
\item[(i)] 
The vector field $Z = -(d\varphi)^{\sharp}$ defined by 
\[ Z u = -g(d\varphi, du) \]
is future-directed, tangential to the surfaces $\{\varphi = s \}$, and satisfies $Z|_{\{x \cdot \omega \leq -1\}} = \p_t + \omega \cdot \nabla_x$. The integral curves of $Z$ are the future-directed null geodesics $\gamma_z(r)$ for $z \in \Sigma_-$. \item[(ii)]
If $\gamma(r)$ is a future-directed causal curve, then $\p_r(\varphi(\gamma(r))) \geq 0$ with equality if and only if $\dot{\gamma}(r) = \lambda Z(\gamma(r))$ for some $\lambda > 0$. Analogously, $\p_r(\varphi(\gamma(r))) \leq 0$ along past-directed causal curves with equality if and only if $\dot{\gamma}(r) = -\lambda Z(\gamma(r))$ for some $\lambda > 0$. 
\end{enumerate}
\end{Lemma}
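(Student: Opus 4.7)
For part (i), the plan is to verify the four asserted properties of $Z$ in turn. First, since $d\varphi$ is nowhere vanishing by Proposition \ref{prop_eikonal}, the vector field $Z = -(d\varphi)^{\sharp}$ is nowhere zero, and the eikonal equation gives
\[
g(Z,Z) = g((d\varphi)^{\sharp}, (d\varphi)^{\sharp}) = g(d\varphi, d\varphi) = 0,
\]
so $Z$ is null everywhere. The identity $Zu = du(Z) = -g(d\varphi, du)$ is immediate from the definition of $\sharp$. On $\{x \cdot \omega \leq -1\}$ we have $\varphi = t - x \cdot \omega$ and $g = \gm$, and a direct computation in Cartesian coordinates (using $g^{00} = -1$, $g^{jj} = 1$) yields $Z = \p_t + \omega \cdot \nabla_x$.

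Next I would show $Z$ is future-directed on all of $\mR^{1+n}$ by a connectedness argument. At points where $Z$ lies in $\{x \cdot \omega \leq -1\}$, the vector $Z$ equals $\p_t + \omega \cdot \nabla_x$, which is future-directed by the convention that $dt$ is future-directed outside $\mR \times \ol{B}$. Since the time-orientation is continuous, the sets where a continuous nowhere-zero null vector field is future-directed, resp.\ past-directed, are both open and partition $\mR^{1+n}$. By connectedness, $Z$ must be future-directed everywhere. For the tangency to the level sets $\{\varphi = s\}$, simply compute $Z\varphi = -g(d\varphi, d\varphi) = 0$. Finally, Proposition \ref{prop_eikonal} gives $d\varphi(\gamma_z(r)) = -\dot{\gamma}_z(r)^{\flat}$, so $\dot{\gamma}_z(r) = -(d\varphi)^{\sharp}(\gamma_z(r)) = Z(\gamma_z(r))$; thus each $\gamma_z$ is an integral curve of $Z$, and since $(z,r) \mapsto \gamma_z(r)$ is a diffeomorphism $\Sigma_- \times \mR \to \mR^{1+n}$ by Proposition \ref{prop_eikonal}, uniqueness of integral curves shows that these are all of them.

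For part (ii), let $\gamma(r)$ be a future-directed causal curve. Then
\[
\p_r \varphi(\gamma(r)) = d\varphi(\dot{\gamma}(r)) = -g(Z(\gamma(r)), \dot{\gamma}(r)).
\]
Since $Z$ is future-directed null (by (i)) and $\dot{\gamma}$ is future-directed causal, Lemma \ref{lemma_timecone} gives $g(Z, \dot{\gamma}) \leq 0$, with equality if and only if $Z$ is null and $\dot{\gamma} = \lambda Z$ for some $\lambda > 0$. Since $Z$ is null, the equality case is precisely $\dot{\gamma}(r) = \lambda Z(\gamma(r))$ for some $\lambda > 0$. This proves the first assertion. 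The past-directed case follows by applying the same reasoning to the future-directed curve $s \mapsto \gamma(-s)$, or equivalently by noting that $-\dot{\gamma}$ is future-directed causal and repeating the argument with a sign change.

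The argument is essentially routine; the only mildly subtle point is the global future-directedness of $Z$, but this is handled cleanly by the continuity/connectedness argument above, using that $Z$ is nonzero (hence cannot cross between the two null half-cones) and that the time-orientation is known on the reference region $\{x \cdot \omega \leq -1\}$.
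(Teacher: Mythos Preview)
Your proof is correct and follows essentially the same approach as the paper's. The only cosmetic difference is in the future-directedness argument: the paper argues along the curves $\gamma_z$ (if $Z$ switched cones along some $\gamma_z$ it would have to vanish, contradicting $d\varphi \neq 0$), whereas you use a direct connectedness argument on $\mR^{1+n}$; both are the same idea and equally valid.
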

\begin{proof}
(i) Since $\varphi = t-x \cdot \omega$ in $\{ x \cdot \omega \leq -1 \}$ and $g = \gm$ there, one has $Z|_{\{x \cdot \omega \leq -1\}} = -(d\varphi)^{\sharp}|_{\{x \cdot \omega \leq -1\}} = \p_t + \omega \cdot \nabla_x$. Thus $Z$ is future-directed when $x \cdot \omega \leq -1$. Now if $Z$ would stop being future-directed somewhere along a curve $\gamma_{z}(r)$, then the null vector $Z$ and hence $d\varphi$ would vanish somewhere, which is impossible by Proposition \ref{prop_eikonal}. One has $d\varphi(Z) = Z\varphi = -g(d\varphi,d\varphi) = 0$, so $Z$ is tangential to $\{ \varphi = s \}$. Finally, Proposition \ref{prop_eikonal} gives 
\[
\dot{\gamma}_z(r) = -d\varphi(\gamma_z(r))^{\sharp} = Z(\gamma_z(r)),
\]
which shows that the integral curves of $Z$ are the null geodesics $\gamma_z(r)$.

(ii) Suppose now that $\gamma(r)$ is causal and future-directed. We have 
\[
\p_r(\varphi(\gamma(r))) = d\varphi(\dot{\gamma}) = -g(Z, \dot{\gamma}).
\]
Since $g(Z,Z) = 0$ and $g(\dot{\gamma}, \dot{\gamma}) \leq 0$ and both $Z$ and $\dot{\gamma}$ are future-directed, it follows from Lemma \ref{lemma_timecone} that $\p_r(\varphi(\gamma(r))) \geq 0$ with equality if and only if $\dot{\gamma} = \lambda Z$ for some $\lambda > 0$. The past-directed case is analogous.
\end{proof}

We conclude this section by stating some topological properties of the functions $\varphi_{\omega}$.

\begin{Lemma} \label{lemma_eikonal_top}
Let $g$ satisfy \eqref{metric_a1}--\eqref{metric_a2}. Assume that $\varphi$ is a smooth solution of $g(d\varphi, d\varphi) = 0$ in $\mR^{1+n}$ with $\varphi = t-x \cdot \omega$ outside $\mR \times \ol{B}$. If $(t_j,x_j) \in \mR \times \ol{B}$ and $t_j \to \pm \infty$, then $\varphi(t_j,x_j) \to \pm \infty$. In particular $\varphi|_{\mR \times \ol{B}}: \mR \times \ol{B} \to \mR$ is proper and surjective.
\end{Lemma}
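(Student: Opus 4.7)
My plan is to reduce the lemma to a statement about the diffeomorphism $\Phi: \Sigma_- \times \mR \to \mR^{1+n}$ from Proposition \ref{prop_eikonal}. Since $\varphi$ solves the eikonal equation and equals $t - x \cdot \omega$ outside $\mR \times \ol{B}$, Proposition \ref{prop_eikonal} gives that $\Phi(z, r) = \gamma_z(r)$ is a diffeomorphism and $\varphi$ is constant along each $\gamma_z$. Writing $(t_j, x_j) = \gamma_{z_j}(r_j)$ with $z_j = (s_j, y_j) \in \Sigma_-$, one gets $\varphi(t_j, x_j) = \varphi(z_j) = s_j - y_j \cdot \omega = s_j + 1$. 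Hence I only need to show $s_j \to +\infty$ whenever $t_j \to +\infty$ (the $-\infty$ case being symmetric), and the ``in particular'' properness then follows since $x_j$ is automatically bounded in $\ol{B}$.

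I would argue by contradiction: suppose a subsequence has $s_j \leq V - 1$. First, I would observe that by Lemma \ref{lemma_eikonal}(i), $\dot{\gamma}_{z_j} = (1, \omega)$ wherever $\gamma_{z_j}$ is outside $\mR \times B$, so the Minkowski straight-line computation $|y_j + r \omega|^2 = |y_j|^2 - 2r + r^2$ shows that if $|y_j| > \sqrt{2}$, the geodesic never enters $\mR \times \ol{B}$, contradicting $(t_j, x_j) \in \mR \times \ol{B}$. Hence $y_j$ is bounded, and the same analysis for $r \leq 0$ forces $r_j \geq 0$. The subcase $s_j \to -\infty$ can then be ruled out using Lemma \ref{lemma_taur_ext} and Proposition \ref{prop_tau_proper}: one obtains $\tau(z_j) \to -\infty$ while $\tau(t_j, x_j) \to +\infty$ along the future-directed null geodesic $\gamma_{z_j}$, which together with properness of $\Phi$ yields a contradiction. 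This leaves the regime in which $z_j \to z_* \in \Sigma_-$ and $r_j \to +\infty$ (by properness of $\Phi$).

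The hard part will be ruling out this final ``trapped'' scenario, where $\gamma_{z_*}$ would spend arbitrarily large affine parameter inside $\mR \times \ol{B}$ while $\varphi$ along it is constantly $s_* + 1$. My plan would be to combine: (i) the fact that $Z = -(d\varphi)^\sharp$ equals $(1, \omega)$ everywhere outside $\mR \times B$, so any exit of the cylinder proceeds in direction $(1, \omega)$ and, when the spatial exit point satisfies $x \cdot \omega > 0$, the geodesic never returns; (ii) the fact that $\tau(\gamma_{z_*}(r)) \to +\infty$ along any inextendible future-directed causal curve (Proposition \ref{prop_gh}); and (iii) properness of $\tau|_{\mR \times \ol{B}}$ from Proposition \ref{prop_tau_proper}(b). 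Combined with continuous dependence of $\gamma_z$ on $z$ applied on compact $r$-intervals, and the compactness of $\Sigma_r \cap (\mR \times \ol{B})$, this should exclude the trapped configuration and yield the desired contradiction.

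Finally, for the ``in particular'' conclusion: surjectivity is immediate because on $\mR \times \p B \subset \mR \times \ol{B}$ one has $\varphi(t, x) = t - x \cdot \omega$, which surjects onto $\mR$ as $t$ varies with $x$ fixed; and properness of $\varphi|_{\mR \times \ol{B}}$ is equivalent to the $t \to \pm \infty \Rightarrow \varphi \to \pm\infty$ statement, since $x$ is automatically bounded in $\ol{B}$.
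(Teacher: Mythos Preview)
Your reduction to the diffeomorphism $\Phi$ and the identity $\varphi(t_j,x_j)=s_j+1$ is exactly the right starting point, and your bound $|y_j|\le\sqrt{2}$, $r_j\ge 0$ is correct. However, there are two genuine gaps.

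\textbf{The subcase $s_j\to -\infty$.} You assert that $\tau(z_j)\to -\infty$ and $\tau(t_j,x_j)\to +\infty$ ``together with properness of $\Phi$ yields a contradiction,'' but properness of $\Phi$ says nothing here: both $(z_j,r_j)$ and $(t_j,x_j)$ escape to infinity, which is perfectly compatible with $\Phi$ being proper. You have not produced a contradiction.

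\textbf{The trapped scenario.} Your plan (i)--(iii) does not rule out the possibility that $\gamma_{z_*}(r)$ remains in $\mR\times\ol B$ for all large $r$ while its $t$-component drifts to $+\infty$. Items (ii) and (iii) are consistent with this picture, and item (i) only helps \emph{after} you know $\gamma_{z_*}$ exits the cylinder at a point with $x\cdot\omega>0$---which is precisely what you have not shown. The phrase ``this should exclude the trapped configuration'' is not an argument.

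The paper closes both gaps simultaneously by a different route: rather than arguing by contradiction on sequences, it first proves the structural fact that \emph{every} $\gamma_z$ reaches $\Sigma_+$ in finite time $r_+(z)$ and stays outside $\mR\times\ol B$ for $r\notin[0,r_+(z)]$. This is done by showing the map $\kappa:\Sigma_+\to\Sigma_-$ (sending $w$ to the unique $z$ with $w$ on $\gamma_z$) is a proper embedding, hence a diffeomorphism by Hadamard's theorem (Lemma~\ref{lemma_kappa}). Properness of $\kappa$ follows in one line from the constancy of $\varphi$ along $\gamma_z$ and the boundary condition $\varphi=t-x\cdot\omega$ on $\Sigma_\pm$. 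Once every $\gamma_z$ exits through $\Sigma_+$, properness of $\varphi|_{\mR\times\ol B}$ is immediate (preimages sit in $\Phi(S\times[0,R])$ for a compact $S$), and the implication $t_j\to+\infty\Rightarrow\varphi(t_j,x_j)\to+\infty$ follows from a short connectedness argument on the sublevel sets $\{\varphi\gtrless s\}$.
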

\begin{proof}
It is enough to consider $\omega = e_1$. By Proposition \ref{prop_eikonal} we know that $\Phi$ is a diffeomorphism $\Sigma_- \times \mR \to \mR^{1+n}$. (In the proof of the main results we could also obtain this conclusion from Theorem \ref{thm_geometric}, which would avoid using the direction (b) $\implies$ (a) in Proposition \ref{prop_eikonal}.) Thus for any $w \in \mR^{1+n}$ there is a unique pair $(\tilde{\kappa}(w), r(w)) \in \Sigma_- \times \mR$ such that $w = \gamma_{\tilde{\kappa}(w)}(r(w))$. Write $\kappa = \tilde{\kappa}|_{\Sigma_+}$. The condition $\varphi = t-x_1$ outside $\mR \times \ol{B}$ implies that for any $w \in \Sigma_+$, one has  
\[
\dot{\gamma}_{\kappa(w)}(r(w)) = -d\varphi(w)^{\sharp} = (1,e_1).
\]
Together with \eqref{metric_a1} these facts show that the family $\gamma_z$ satisfies (i)--(iii) in Lemma \ref{lemma_kappa}. Thus $\kappa: \Sigma_+ \to \Sigma_-$ is an embedding.

Next we prove that $\kappa$ is proper. Let $K \subset \Sigma_-$ be compact, so that for some $C > 1$ one has 
\[
K \subset \{ (t,-1,x') \,:\, |t| \leq C, \, |x'| \leq C \}.
\]
Let $w = (\tilde{t}, 1, \tilde{x}') \in \kappa^{-1}(K)$, so there is a unique $z = (t,-1,x') \in K$ with $\gamma_z(r_+(z)) = w$. The key fact that we need is that $\varphi$ is constant along the curves $\gamma_z$. Since also $\varphi = t-x_1$ outside $\mR \times \ol{B}$, we have 
\[
|\tilde{t} - 1| = |\varphi(w)| = |\varphi(z)| = |t+1|.
\]
This implies that 
\[
|\tilde{t}| \leq |t| + 2 \leq C + 2.
\]
On the other hand, if $|\tilde{x}'| > C$ then \eqref{metric_a1} implies that $|x'| > C$, so $w \notin \kappa^{-1}(K)$. This proves that 
\[
\kappa^{-1}(K) \subset \{ (\tilde{t},1,\tilde{x}') \,:\, |\tilde{t}| \leq C+2, \, |\tilde{x}'| \leq C \}.
\]
Thus $\kappa$ is proper, and Lemma \ref{lemma_kappa} implies that for any $z \in \Sigma_-$ the geodesic $\gamma_z$ reaches $\Sigma_+$ at a unique time $r_+(z)$. By (i) it also follows that $\gamma_z(r)$ stays outside $\mR \times \ol{B}$ unless $r \in [0,r_+(z)]$.

We now prove that $\varphi|_{\mR \times \ol{B}}$ is proper. Suppose that $w \in (\varphi|_{\mR \times \ol{B}})^{-1}([-C,C])$, so $w \in \mR \times \ol{B}$ and $|\varphi(w)| \leq C$. Write $\tilde{\kappa}(w) = (t,-1,x')$. Since $\varphi$ is constant along $\gamma_{\tilde{\kappa}(w)}$, we have 
\[
|t+1| = |\varphi(\tilde{\kappa}(w))| = |\varphi(w)| \leq C.
\]
Thus $|t| \leq C + 1$. Consider the set 
\[
S = \{ (t,-1,x') \in \Sigma_- \,:\, |t| \leq C+1, \, |x'| \leq 1 \}.
\]
Since $\gamma_z(r)$ stays outside $\mR \times \ol{B}$ unless $r \in [0,r_+(z)]$, we have 
\[
(\varphi|_{\mR \times \ol{B}})^{-1}([-C,C]) \subset \{ \Phi(z,r) \,:\, z \in S, \, r \in [0,r_+(z)] \}.
\]
Now $S$ is compact and $r_+$ is smooth, so the last set is contained in $\Phi(S \times [0,R])$ for some $R > 0$. This set is compact by continuity of $\Phi$. We have proved that $\varphi|_{\mR \times \ol{B}}$ is proper. It is also surjective onto $\mR$ since $\varphi = t - x \cdot \omega$ on $\mR \times \p B$.

Finally, fix $s \in \mR$. The set $\{ \varphi > s \}$ is connected, since any $w_j \in \{ \varphi > s \}$ for $j = 1, 2$ can be first connected to $\tilde{\kappa}(w_j)$ by the curve $\gamma_{\tilde{\kappa}(w_j)}$ along which $\varphi$ is constant, and then $\tilde{\kappa}(w_1)$ and $\tilde{\kappa}(w_2)$ can be joined by a line in $\Sigma_- \cap \{ \varphi > s \} = \Sigma_- \cap \{ t > s-1 \}$. Similarly $\{ \varphi < s \}$ is connected, and these are maximal connected sets in $\mR^{1+n} \setminus \{ \varphi = s \}$ since $\varphi$ is continuous. Using that $\varphi|_{\mR \times \ol{B}}$ is proper, the set $\{ \varphi = s \} \cap (\mR \times \ol{B})$ is compact and hence contained in $\{ |t| \leq C \}$ for some $C$. The connected set $\{ t > C \} \cap (\mR \times \ol{B})$ must be contained in the component $\{ \varphi > s \}$ since $\varphi = t - x_1$ on $\mR \times \p B$. It follows that 
\[
\{ \varphi \leq s \} \cap (\mR \times \ol{B}) \subset \{ t \leq C \}.
\]
This proves that $\varphi(t_j,x_j) \to +\infty$ whenever $(t_j,x_j) \in \mR \times \ol{B}$ and $t_j \to \infty$. The case where $t_j \to -\infty$ is analogous.
\end{proof}


\section{Plane waves} \label{sec_pw}

In this section we let $g$ be a smooth Lorentzian metric in $\mR^{1+n}$ satisfying \eqref{metric_a1}--\eqref{metric_a2}. We fix a Cauchy temporal function $\tau$ having the properties in Proposition \ref{prop_gh} (our constructions will be independent of the choice of $\tau$). The following result gives a precise definition and some properties of the distorted plane waves.

\begin{Proposition} \label{prop_pw}
Let $g$ satisfy \eqref{metric_a1}--\eqref{metric_a2}. For any $\omega \in S^{n-1}$ and $s \in \mR$ there is a unique solution $U = U_{\omega,s}^g \in L^2_{\mathrm{loc}}(\mR^{1+n})$ of the problem 
\[
\Box_g U = 0 \text{ in $\mR^{1+n}$}, \qquad U|_{\{ \tau < \tau_0 \}} = H(t-x \cdot \omega-s)
\]
where $\tau_0 \in \mR$ is chosen so that $\mathrm{supp}(\Box_g(H(t-x \cdot \omega-s))) \subset \{ \tau > \tau_0 \}$. The definition is independent of the choice of such $\tau_0$, and also independent of the choice of the Cauchy temporal function $\tau$.
Moreover, writing $\omega = e_1$ and $x = (x_1, x')$, one has 
    \begin{equation}\label{supp_U}
\mathrm{supp}(U) \subset J^+(\{ x_1 \leq -1 \text{ or } |x'| \geq 1 \} \cap \{  t \geq x_1 + s \}),
    \end{equation}
and $\WF(U)$ is the flowout of $\{ (z, \lambda(1,-\omega)) \,:\, z \in \Sigma_{-,s-1}, \, \lambda \neq 0 \}$ under the bicharacteristic flow for $\Box_g$.
\end{Proposition}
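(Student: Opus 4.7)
The plan is to set $U = U_0 + V$, where $U_0(t,x) = H(t - x\cdot\omega - s)$ is the Minkowski plane wave and $V$ is a corrector. A direct computation gives $\Box_{\gm} U_0 = 0$, and since $U_0$ is locally constant off $\{t = x\cdot\omega + s\}$, \eqref{metric_a1} forces $\Box_g U_0 = (\Box_g - \Box_{\gm}) U_0$ to be supported in the compact set $\{t = x\cdot\omega + s\} \cap (\mR \times \ol{B})$; Proposition \ref{prop_tau_proper}(a) then shows that $\tau$ is bounded below on this set, producing a valid $\tau_0$. Applying Proposition \ref{prop_wp} to solve $\Box_g V = -\Box_g U_0$ with $V|_{\{\tau < \tau_0\}} = 0$, I set $U = U_0 + V$; uniqueness follows directly from uniqueness in Proposition \ref{prop_wp}. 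For independence of $\tau_0$, if $\tau_0 < \tau_0'$ are both valid, the bound $\supp(V) \subset J^+(\supp \Box_g U_0) \subset \{\tau \geq \tau_0'\}$ forces $U|_{\{\tau < \tau_0'\}} = U_0$, so uniqueness yields agreement; independence of $\tau$ is analogous, since Proposition \ref{prop_wp} characterizes $V$ by the $\tau$-free properties $\Box_g V = -\Box_g U_0$ and $\supp(V) \subset J^+(\supp \Box_g U_0)$.

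For the support claim, take $\omega = e_1$ and write $K = \{x_1 \leq -1 \text{ or } |x'| \geq 1\} \cap \{t \geq x_1 + s\}$. The construction gives
\[
\supp(U) \subset \supp(U_0) \cup J^+(\supp \Box_g U_0) \subset \{t \geq x_1 + s\} \cup J^+\bigl(\{t = x_1 + s\} \cap (\mR \times \ol{B})\bigr).
\]
The plan is to show $\{t \geq x_1 + s\} \subset J^+(K)$; the second piece then follows by transitivity of $J^+$. For $(t,x)$ with $t \geq x_1 + s$, either $(t,x) \in K$ already, or $x_1 > -1$ and $|x'| < 1$, and one connects $(t,x)$ to a point of $K$ by a past-directed $g$-causal curve lying in the Minkowski exterior $\{|x| > 1\}$ (where $g = \gm$), using either a Minkowski null segment in direction $-(1,\omega)$ ending on $\Sigma_- \cap \{t \geq s-1\} \subset K$ or a lateral null segment reaching $\{|x'| = 1\} \subset K$, depending on where $(t,x)$ sits. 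I expect this case analysis to be the main obstacle, since the chosen segments must be routed carefully to avoid the cylinder $\mR \times \ol{B}$ inside which $g$ may differ from $\gm$.

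For the wavefront claim, microlocal elliptic regularity applied to $\Box_g U = 0$ gives $\WF(U) \subset \mathrm{char}(\Box_g)$, and propagation of singularities gives that $\WF(U)$ is a union of $H_p$-orbits with $p = -\tfrac12 g^{jk}\zeta_j\zeta_k$. In $\{\tau < \tau_0\}$ one has $U = U_0$, so
\[
\WF(U) \cap T^*\{\tau < \tau_0\} = \{((t,x), \lambda(1,-\omega)) : t = x\cdot\omega + s,\ \tau(t,x) < \tau_0,\ \lambda \neq 0\};
\]
shrinking $\tau_0$ one may assume these base points lie outside $\mR \times \ol{B}$, so locally $g = \gm$. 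By Lemma \ref{lemma_bichar_geodesic} every $H_p$-orbit projects to a $g$-null geodesic, hence to a causal curve meeting $\{\tau = \tau_0\}$ exactly once (Proposition \ref{prop_gh}), so every orbit in $\WF(U)$ is the flowout of an orbit point inside the displayed data set. Tracing the Minkowski null bicharacteristic through $((x_1 + s, x_1, x'), \lambda(1,-\omega))$ backward in direction $-(1,\omega)$, it reaches $((s-1, -1, x'), \lambda(1,-\omega)) \in T^*\Sigma_{-,s-1}$ at parameter $r = x_1 + 1$, with the covector unchanged by Lemma \ref{lemma_bichar_geodesic}. Hence $\WF(U)$ is the $H_p$-flowout of $\{(z, \lambda(1,-\omega)) : z \in \Sigma_{-, s-1},\ \lambda \neq 0\}$; equality (rather than inclusion) follows because $U_0$ is genuinely singular on all of $N^*\{t = x\cdot\omega + s\} \setminus 0$.
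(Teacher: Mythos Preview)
Your existence, uniqueness, independence, and wavefront arguments are essentially those of the paper and look fine. There are two genuine gaps, one substantive and one smaller.

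\textbf{The support claim.} Your route is to show the geometric inclusion $\{t \geq x_1 + s\} \subset J^+(K)$ by connecting each point to $K$ via a past-directed causal segment ``lying in the Minkowski exterior $\{|x|>1\}$''. This cannot work for points $(t,x)$ with $|x|<1$: any past-directed curve from such a point begins \emph{inside} the cylinder, where $g$ is unknown, so no Minkowski segment is available. You acknowledge the routing is delicate, but the case $|x|<1$ is never addressed, and it is not clear that a direct causal argument for it is any easier than the proposition itself. The paper sidesteps this entirely by changing the decomposition: instead of $U=U_0+V$ it writes $U=\tilde U_0+\tilde R$ with $\tilde U_0 = H(t-x\cdot\omega-s)\,\chi_{\{\tau<\tau_0\}}$. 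By Proposition~\ref{prop_tau_proper}(a) one can choose $\tau_0$ so small that $\{\tau\leq\tau_0\}\cap\{t\geq x_1+s\}$ lies in $\{x_1\leq -1 \text{ or } |x'|\geq 1\}$, hence in $K$; then finite propagation speed gives $\supp(U)\subset J^+(\supp\tilde U_0)\subset J^+(K)$ directly, with no case analysis inside the cylinder. This is the key idea you are missing.

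\textbf{The $L^2_{\mathrm{loc}}$ regularity.} Your construction $U=U_0+V$ with $V$ obtained from Proposition~\ref{prop_wp} does not immediately give $U\in L^2_{\mathrm{loc}}$: since $\Box_g U_0$ contains a $\delta'$ term, Proposition~\ref{prop_wp} only yields $V\in H^{s+1}_{\mathrm{loc}}$ for some $s<-1$. The paper obtains $L^2_{\mathrm{loc}}$ separately, by rerunning the wavefront argument with the $L^2$ (Sobolev) wavefront set and using that $U_0$ has empty $L^2$ wavefront set.
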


\begin{proof} 
By \eqref{metric_a1} the function $f := -\Box_g(H(t-x \cdot \omega-s))$ is supported in the set $\{ (t,x) \,:\, |x| \leq 1, \ t \geq x \cdot \omega + s \}$ and therefore also in $[s-1,\infty) \times \ol{B}$. By Proposition \ref{prop_tau_proper} (a) there is $\tau_0$ with $\supp(f) \subset \{ \tau > \tau_0 \}$. Since $g$ is globally hyperbolic, Proposition \ref{prop_wp} ensures that there is a unique solution $R := G_+ f$ of $\Box_g R = f$ with $R|_{\{ \tau < \tau_0\}} = 0$ and $\supp(R) \subset J^+(\supp(f))$. This shows  that there is a unique distributional solution 
\[
U = H(t-x \cdot \omega-s) -G_+(\Box_g(H(t-x \cdot \omega-s)))
\]
of $\Box_g U = 0$ in $\mR^{1+n}$ with $U|_{\{ \tau < \tau_0 \}} = H(t-x \cdot \omega-s)$.

Now let $\tilde{\tau}$ be another Cauchy temporal function and let $\tilde{\tau}_0$ be such that $\supp(f) \subset \{ \tilde{\tau} > \tilde{\tau}_0 \}$ (such a number exists by Proposition \ref{prop_tau_proper} (a)). Let $\tilde{R}$ be the solution of $\Box_g \tilde{R} = f$ with $\tilde{R}|_{\{ \tilde{\tau} < \tilde{\tau}_0 \}} = 0$. Then $\supp(\tilde{R}) \subset J^+(\supp(f))$, and consequently $\Box_g(R-\tilde{R}) = 0$ with $\supp(R-\tilde{R}) \subset J^+(\supp(f))$. Since $\tau$ is strictly increasing along future-directed causal curves, $J^+(\supp(f))$ is contained in $\{ \tau > \tau_0 \}$ (or similarly in $\{ \tilde{\tau} > \tilde{\tau}_0 \}$). Thus $R-\tilde{R}|_{\{ \tau < \tau_0 \}} = 0$, and uniqueness of the Cauchy problem implies that $R = \tilde{R}$ everywhere. This shows that the definition of $U$ is independent of the choice of Cauchy temporal function and of the choice of $\tau_0$.

To show \eqref{supp_U}, write $\tilde{U}_0 = H(t-x\cdot \omega - s) \chi_{\{ \tau < \tau_0 \}}$ where $\chi_{\{ \tau < \tau_0 \}}$ is the characteristic function of $\{ \tau < \tau_0 \}$. Then $U = \tilde{U}_0 + \tilde{R}$ where $\tilde{R}$ is the unique forward solution of 
\[
\Box_g \tilde{R} = -\Box_g \tilde{U}_0, \qquad \tilde{R}|_{\{ \tau < \tau_0 \}} = 0.
\]
Fix $T \geq 3$. By Proposition \ref{prop_tau_proper} (a) we may choose $\tau_0$ so that 
    \begin{equation}\label{tau0_bound}
\{ \tau \leq \tau_0 \} \subset \{ (t,x) \in \mR^{1+n} \,:\, t < s - (T-1) + \max( |x| -1, 0 ) \}.
    \end{equation}
Therefore $\supp(\tilde{U}_0) \subset A$, where 
    \begin{equation*}
A = \{ t < s-(T-1) + \max( |x| -1, 0 ) \} 
\cap \{ t \geq x \cdot \omega + s \}.
    \end{equation*}
It follows from $T > 2$ that $A$ is disjoint from $\{ |x| \leq 1 \}$. Writing $\omega = e_1$ and $x = (x_1,x')$ gives more precise bounds such as 
\[
A \subset  \{ x_1 \leq \frac{1}{2T}(|x'|^2-T^2) \}
\cap \{ t \geq x_1+s \},
\]
where $T$ can be chosen arbitrarily large. In particular, the condition $T \geq 3$ gives  
\begin{equation} \label{uzero_more_precise_bound}
A \subset \{ x_1 \leq -1 \text{ or } |x'| \geq 1 \} 
\cap \{ t \geq x_1+s \}.
\end{equation}
Since $\tilde{U}_0$ is supported in $A$, finite propagation speed gives that $\tilde{R}$ and hence also $U$ are supported in $J^+(A)$. Hence \eqref{supp_U} follows from \eqref{uzero_more_precise_bound}.

Let us next determine $\WF(U)$. Since $\Box_g U = 0$, it follows that $\WF(U)$ is contained in the characteristic set of $\Box_g$ \cite[Theorem 8.3.1]{hormander}, and by propagation of singularities $\WF(U)$ is invariant under the bicharacteristic flow there \cite[Theorem 26.1.1]{hormander}. Let $\gamma(r) = (z(r), \zeta(r))$ be any null bicharacteristic of $\Box_g$ and let $\varepsilon > 0$. By Lemma \ref{lemma_bichar_geodesic} $z(r)$ is a null geodesic, hence a causal curve, and therefore it meets $\{ \tau = \tau_0 - \varepsilon \}$ at exactly one time $r_0$. We also recall that $U = H(t-x \cdot \omega-s)$ in $\{ \tau < \tau_0 \}$. The fact that $\WF(U)$ is invariant under bicharacteristic flow yields that 
\[
(z(0), \zeta(0)) \in \WF(U) \ \  \Longleftrightarrow \ \  (z(r_0), \zeta(r_0)) \in \WF(H(t-x \cdot \omega-s)) = N^*(\{t = x \cdot \omega + s \}).
\]
Recalling \eqref{tau0_bound}, we have 
    \begin{equation*}
\{ \tau = \tau_0 - \varepsilon \} \cap \{t = x \cdot \omega + s \} \subset A.
    \end{equation*}
By \eqref{uzero_more_precise_bound} it follows (writing again $\omega = e_1$) that
\[
z(r_0) \in \{ x_1 \leq -1 \text{ or } |x'| \geq 1 \} \cap \{ t = x_1+s \}.
\]
If $z(r_0) \in \{ x_1 \geq -1, |x'| \geq 1 \}$, then by \eqref{metric_a1} the bicharacteristic must also contain a point of  $N^*( \{ x_1 \leq -1,\ t = x_1 + s \} )$. Thus $\WF(U)$ is the flowout of $N^*( \{ x \cdot \omega \leq -1,\ t = x \cdot \omega + s \} )$ under the bicharacteristic flow. By \eqref{metric_a1} this is also the flowout of $\{ (z, \lambda (1,-\omega)) \,:\, z \in \Sigma_{-,s-1}, \, \lambda \neq 0 \}$.
 
The function $H(t-x \cdot \omega - s)$ is in $L^2_{\mathrm{loc}}$ and therefore has empty $L^2$ wave front set. Thus repeating the above argument for the $L^2$ wave front set of $U$ (using the Sobolev wave front set results in \cite[Theorems 18.1.31 and 26.1.4]{hormander} instead of the $C^{\infty}$ versions) proves that the $L^2$ wave front set of $U$ is empty. Then \cite[Theorem 18.1.31]{hormander} implies that $U \in L^2_{\mathrm{loc}}(\mR^{1+n})$.
\end{proof}

If there is a smooth solution $\varphi_{\omega}$ of the eikonal equation, the plane wave $U_{\omega,s}$ has the following support properties.

\begin{Lemma} \label{lem_h_pwe}
Let $g$ satisfy \eqref{metric_a1}--\eqref{metric_a2} and suppose that $\varphi$ is a smooth solution of $g(d\varphi,d\varphi) = 0$ in $\mR^{1+n}$ with $\varphi|_{\{ x \cdot \omega \leq -1 \}} = t - x \cdot \omega$. Then $\supp(U_{\omega,s}^g) \subset \{ \varphi \geq s \}$ and $U_{\omega,s}^g$ is smooth in $\{ \varphi > s \}$.
\end{Lemma}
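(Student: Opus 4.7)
The plan is to deduce both assertions from Propositions \ref{prop_pw} and \ref{prop_eikonal} together with the monotonicity of $\varphi$ along causal curves provided by Lemma \ref{lemma_eikonal}. The first step I would take is to note that the hypotheses of the lemma are exactly condition (b) of Proposition \ref{prop_eikonal}, so the equivalent (a) also holds. In particular the ``moreover'' part of Proposition \ref{prop_eikonal} applies, giving $\varphi = t - x \cdot \omega$ on $\mR^{1+n} \setminus \mR \times B$ and $d\varphi$ nowhere vanishing. For notational ease I would then reduce to $\omega = e_1$ and write $x = (x_1, x')$, matching the setup in \eqref{supp_U}.

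For the support statement, I would invoke \eqref{supp_U} to get $\supp(U_{\omega,s}) \subset J^+(A)$ with $A = \{x_1 \le -1 \text{ or } |x'| \ge 1\} \cap \{t \ge x_1 + s\}$, and verify that $A \subset \{\varphi \ge s\}$. On the piece $\{x_1 \le -1\}$ the boundary condition directly yields $\varphi = t - x_1 \ge s$. On the piece $\{|x'| \ge 1\}$ we have $|x| \ge |x'| \ge 1$, so $(t,x)$ lies outside $\mR \times B$ and the extended identity $\varphi = t - x_1$ again gives $\varphi \ge s$. By Lemma \ref{lemma_eikonal}(ii), $\varphi$ is nondecreasing along future-directed causal curves, so $\{\varphi \ge s\}$ is future-closed and $J^+(A) \subset \{\varphi \ge s\}$, which gives $\supp(U_{\omega,s}) \subset \{\varphi \ge s\}$.

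For smoothness in $\{\varphi > s\}$, I would use the wave front set description from Proposition \ref{prop_pw}: $\WF(U_{\omega,s})$ is the bicharacteristic flowout of $\Lambda_0 = \{(z, \lambda(1,-\omega)) : z \in \Sigma_{-,s-1},\ \lambda \neq 0\}$. At any $z \in \Sigma_{-,s-1}$ we have $\varphi(z) = (s-1) - (-1) = s$ and, since $z \in \{x_1 \leq -1\}$, also $d\varphi(z) = (1,-\omega)$, so each covector in $\Lambda_0$ has the form $\lambda\, d\varphi(z)$. Because $\lambda \varphi$ also solves the eikonal equation, Lemma \ref{lemma_bichar_geodesic} applied with $\lambda \varphi$ in place of $\varphi$ shows that the bicharacteristic through $(z, \lambda\, d\varphi(z))$ has its base curve a null geodesic along which $\varphi$ stays constantly equal to $s$. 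Consequently the base projection of $\WF(U_{\omega,s})$ is contained in $\{\varphi = s\}$, so $U_{\omega,s}$ is smooth off this level set, and in particular on $\{\varphi > s\}$.

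The only point that requires genuine care is identifying $\varphi$ on the piece $\{|x'| \ge 1, \, x_1 > -1\}$ of $A$, where the boundary condition on $\varphi$ does not directly apply. This is precisely why the invocation of Proposition \ref{prop_eikonal} at the start (to extend $\varphi = t - x \cdot \omega$ to all of $\mR^{1+n} \setminus \mR \times B$) is essential; everything else is then a routine combination of the wave front set calculus and the monotonicity of $\varphi$ along causal curves.
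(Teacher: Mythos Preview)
Your proposal is correct and follows essentially the same approach as the paper's proof: both extend $\varphi = t - x\cdot\omega$ to the set $\{x_1 \le -1 \text{ or } |x'| \ge 1\}$ (the paper does this via constancy of $\varphi$ along the $\gamma_z$ and \eqref{metric_a1}, you via the ``moreover'' part of Proposition~\ref{prop_eikonal}), then combine \eqref{supp_U} with the monotonicity in Lemma~\ref{lemma_eikonal}(ii) for the support claim, and use the wave front set description in Proposition~\ref{prop_pw} for smoothness. Your $\lambda\varphi$ argument via Lemma~\ref{lemma_bichar_geodesic} is a correct way to spell out the terse sentence ``$\WF(U)$ lies over $\{\varphi = s\}$'' in the paper.
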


\begin{proof}
Write $\omega = e_1$ and $x = (x_1,x')$. Since $\varphi|_{\{ x_1 \leq -1 \}} = t-x_1$ and $\varphi$ is constant along the curves $\gamma_z$, it follows from \eqref{metric_a1} that $\varphi = t-x_1$ in $\{ x_1 \leq -1 \text{ or } |x'| \geq 1 \}$. From Proposition \ref{prop_pw} we know that $U = U_{\omega,s}^g$ is in $L^2_{\mathrm{loc}}(\mR^{1+n})$ and that 
\[
\supp(U) \subset J^+(\{ x_1 \leq -1 \text{ or } |x'| \geq 1 \} \cap \{ t \geq x_1 + s \}) \subset J^+(\{ \varphi \geq s \}).
\]
Lemma \ref{lemma_eikonal} implies that $\varphi$ is nondecreasing along any future-directed causal curve. This implies that 
\[
\supp(U) \subset \{ \varphi \geq s \}.
\]
By Proposition \ref{prop_pw}, $\WF(U)$ lies over $\{ \varphi = s \}$ so $U$ is smooth in $\{ \varphi > s \}$.
\end{proof}

Lemma \ref{lem_h_pwe} would already be sufficient for the applications to inverse problems. In the next result we give more precise information and record the basic fact that the plane waves are conormal distributions whenever the eikonal equation has a smooth solution. See \cite[Section 18.2]{hormander} for properties of conormal distributions and the notation $I^m(X,Y)$.

\begin{Lemma} \label{lem_pw_conormal}
Assume the conditions in Lemma \ref{lem_h_pwe}. Then $U = U_{\omega,s}^g \in I^{-\frac{1+n}{4}}(\mR^{1+n}, Y)$ where $Y = \{ \varphi = s \}$, and one has the representation 
\[
U_{\omega,s} = u H(\varphi-s)
\]
where $u$ is smooth in $\{ \varphi \geq s \}$.
\end{Lemma}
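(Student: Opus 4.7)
The plan has two parts: first identify $U$ as a conormal distribution associated to $Y$, then extract the explicit product representation from the one-sided support together with the smoothness of $U$ off $Y$.

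I would start by computing $\WF(U)$. By Proposition \ref{prop_pw}, $\WF(U)$ is the bicharacteristic flowout of the set $\mathcal{L}_0 = \{(z, \lambda(1,-\omega)) : z \in \Sigma_{-,s-1},\ \lambda \neq 0\}$. For each $z_0 \in \Sigma_{-,s-1}$, the condition $\varphi = t - x \cdot \omega$ on $\{x \cdot \omega \le -1\}$ gives $\varphi(z_0) = s$ (so $z_0 \in Y$) and $d\varphi(z_0) = (1,-\omega)$; hence $\mathcal{L}_0 \subset N^*Y \setminus 0$. Lemma \ref{lemma_bichar_geodesic} and the identity $d\varphi(\gamma_z(r)) = -\dot\gamma_z(r)^\flat$ from Proposition \ref{prop_eikonal} then show that the bicharacteristic through $(z_0, \lambda d\varphi(z_0))$ is the curve $r \mapsto (\gamma_{z_0}(r), \lambda d\varphi(\gamma_{z_0}(r)))$. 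Since $\varphi$ is constant along each $\gamma_{z_0}$ with value $s$, the whole flowout stays in $N^*Y \setminus 0$, giving $\WF(U) \subset N^*Y \setminus 0$.

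Next, since $Y$ is a smooth hypersurface ($d\varphi$ nowhere zero) and $U \in L^2_{\mathrm{loc}}(\mR^{1+n})$ by Proposition \ref{prop_pw}, the structure theorem for conormal distributions \cite[Section 18.2]{hormander} yields $U \in I^m(\mR^{1+n}, Y)$ for a unique $m$, and I would pin down $m = -(n+1)/4$ by tracking the $L^2$ regularity together with the Heaviside-type jump across $Y$ (by Lemma \ref{lem_h_pwe}, $U$ is smooth in $\{\varphi > s\}$ and zero in $\{\varphi < s\}$). In local coordinates $(\varphi, y)$ around any point of $Y$, the one-sided support combined with the conormal order forces the normal Fourier profile of $U$ to be the Heaviside symbol $(\xi + i0)^{-1}$ times a smooth tangential symbol; evaluating the resulting oscillatory integral produces $U = u(\varphi, y) H(\varphi - s)$ with $u$ smooth up to $Y$. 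Patching via a partition of unity over $Y$, and using that $u$ is uniquely determined as $u = U$ on $\{\varphi > s\}$, would yield the global representation with $u \in C^\infty(\{\varphi \geq s\})$.

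The main obstacle will be the last passage, from ``conormal with one-sided support'' to the clean product $u H(\varphi - s)$ with $u$ smooth up to $Y$. A concrete alternative that bypasses the symbol-calculus argument is a progressing-wave construction: using the global diffeomorphism $\Phi$ from Proposition \ref{prop_eikonal}, I would define a smooth $u_0$ by solving the transport ODE $2 g(du_0, d\varphi) = u_0 \Box_g \varphi$ along the geodesics $\gamma_z$ with initial value $u_0 = 1$ on $\Sigma_{-,s-1}$, then iteratively produce smoother amplitudes $u_k$ whose one-sided contributions cancel $\Box_g$ to all orders. Borel summation combined with the forward uniqueness from Proposition \ref{prop_wp} and the support properties from Lemma \ref{lem_h_pwe} would identify the resulting sum with $U$; since the subleading contributions are continuous across $Y$, they can be absorbed into a single smooth coefficient $u$ multiplying $H(\varphi - s)$, yielding the claimed representation and conormal order.
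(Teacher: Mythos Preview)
Your first approach contains a genuine gap: having $\WF(U) \subset N^*Y \setminus 0$ together with $U \in L^2_{\mathrm{loc}}$ does \emph{not} imply that $U$ is conormal. There is no ``structure theorem'' in \cite[Section~18.2]{hormander} to that effect; conormality in the sense of \cite[Definition~18.2.6]{hormander} requires that $L_1 \cdots L_N U$ remain in a fixed local Besov space for \emph{all} choices of tangential first-order operators $L_j$, and this is strictly stronger than a wave front set condition. So the passage ``the structure theorem \dots\ yields $U \in I^m(\mR^{1+n},Y)$'' does not go through, and the gap is earlier than the ``last passage'' you flagged.

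Your alternative is essentially the route the paper takes, and it is the correct one. The paper builds the finite progressing-wave expansion $U^{(N)} = \sum_{j=0}^{N} u_j H_j(\varphi - s)$ with $H_j(r) = r^j H(r)$, solves the same transport equations you describe (globally, thanks to the diffeomorphism $\Phi$), and then writes $U = U^{(N)} + R^{(N)}$ where $R^{(N)} \in H^{N-1}_{\mathrm{loc}}$ is the forward solution from Proposition~\ref{prop_wp}. Two simplifications compared to your sketch: no Borel summation is needed, since the finite expansion with Sobolev remainder already suffices to check conormality directly via Definition~18.2.6 (in coordinates with $\varphi - s = y_0$, each tangential $L_j$ has the form $a_0 y_0 \partial_{y_0} + \sum a_k \partial_{y_k} + b$, which preserves the $L^2$ class of $U^{(N+1)}$); and the product representation $U = u H(\varphi - s)$ follows immediately from $U|_{\{\varphi \geq s\}} \in H^N_{\mathrm{loc}}$ for every $N$ together with $\supp(U) \subset \{\varphi \geq s\}$, without any symbol analysis.
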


The fact that $U$ is conormal follows by writing $U = \hat{U}_0 + \hat{R}$ where $\hat{U}_0 = H(t-x \cdot \omega -s) \psi(\tau-\tau_0)$ with $\psi \in C^{\infty}(\mR)$ satisfying $\psi(t) = 1$ for $t < 0$ and $\psi(t) = 0$ for $t > \delta$ with $\delta > 0$ small, and letting $\hat{R}$ be the solution of 
\[
\Box_g \hat{R} = -\Box_g \hat{U}_0, \qquad R|_{\{ \tau < \tau_0 \}} = 0
\]
with $\tau_0$ chosen as in \eqref{uzero_more_precise_bound}. Then $\hat{U}_0$ is conormal with respect to $\{ \varphi=s \}$, and \cite[Proposition 2.3]{greenleaf1993} with the choice $\Lambda_1 = N^*(\{ \varphi = s \})$ ensures that $\hat{R}$ and hence $U$ are conormal with respect to $\{ \varphi = s \}$. For completeness we give another direct proof of Lemma \ref{lem_pw_conormal} in Appendix \ref{sec_app}.


\section{Distinguishing a Lorentzian metric from the Minkowski metric} \label{sec_rigidity_proofs}

In this section we will prove our main results, beginning with Theorem \ref{thm_rigidity}. We assume that we can measure $U_{\omega,s}|_{\mR \times \p B}$ for all $\omega \in \Omega$ and for all time-delay parameters $s \in \mR$, where $\Omega$ is the finite set 
\[
\Omega = \{ \pm e_j, \frac{1}{\sqrt{2}}(e_j + e_k) \,:\, 1 \leq j, k \leq n, \ j < k \}.
\]
More precisely, recall the forward operator 
\[
\mathcal{F}_{\Omega}: g \mapsto (U_{\omega,s}^g|_{\mR \times \p B})_{\omega \in \Omega, s \in \mR}.
\]
We assume that 
\[
\mathcal{F}_{\Omega}(g) = \mathcal{F}_{\Omega}(\gm).
\]
We wish to prove that $F^* g = \gm$ for some diffeomorphism $F: \mR^{1+n} \to \mR^{1+n}$ with $F = \mathrm{id}$ outside $\mR \times \ol{B}$.

We now begin the proof of the uniqueness result. Let $\omega \in \Omega$ and $s \in \mR$. First we use well-posedness of the exterior Dirichlet problem to prove the following.

\begin{Lemma} \label{lem_h_exterior}
Let $g$ satisfy \eqref{metric_a1}--\eqref{metric_a2}. If $U_{\omega,s}^g|_{\mR \times \p B} = U_{\omega,s}^{\gm}|_{\mR \times \p B}$, then 
\[
U_{\omega,s}^g = U_{\omega,s}^{\gm} \text{ in $\mR \times (\mR^n \setminus \ol{B})$.}
\]
\end{Lemma}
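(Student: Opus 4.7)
The plan is to reduce the claim to a uniqueness statement for the Minkowski wave equation on the exterior region $E := \mR \times (\mR^n \setminus \ol B)$. Set $v := U_{\omega,s}^g - U_{\omega,s}^{\gm}$, which lies in $L^2_{\mathrm{loc}}(\mR^{1+n})$ by Proposition \ref{prop_pw}. Since $g = \gm$ on $E$ by \eqref{metric_a1}, and both $U_{\omega,s}^g$ and $U_{\omega,s}^{\gm} = H(t-x\cdot\omega-s)$ satisfy the Minkowski wave equation there, we have $\Box_{\gm} v = 0$ on $E$. The hypothesis $U_{\omega,s}^g|_{\mR \times \p B} = U_{\omega,s}^{\gm}|_{\mR \times \p B}$ gives $v|_{\mR \times \p B} = 0$ as a distribution, and by Proposition \ref{prop_pw} both functions coincide with $H(t-x\cdot\omega-s)$ on $\{\tau < \tau_0\}$ for any $\tau_0$ chosen below the compact support of $\Box_g(H(t-x\cdot\omega-s))$ in $\mR \times \ol B$, so $v \equiv 0$ on the open set $\{\tau < \tau_0\}$.

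Next I would verify that these three facts force $v \equiv 0$ on $E$ via uniqueness for the mixed Cauchy–Dirichlet problem for $\Box_{\gm}$ on $E$ with timelike boundary $\mR \times \p B$. The approach is to fix an arbitrary point $p_0 \in E$ and apply finite propagation speed for the Minkowski wave equation to the backward null cone $C_{p_0}$ from $p_0$: the value $v(p_0)$ is then determined by $v$ restricted to $C_{p_0} \cap \ol{E}$, with zero Dirichlet data on $C_{p_0} \cap (\mR \times \p B)$ and ``initial'' values on some past spacelike slice inside $C_{p_0} \cap E$ that we arrange to vanish.

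To arrange zero past data, I would combine Proposition \ref{prop_tau_proper}(a) with the Lipschitz bound $|\nabla_x \alpha| < 1$ for the graph of $\Sigma_{\tau_0}$ outside the ball (a consequence of $\Sigma_{\tau_0}$ being $\gm$-spacelike there by \eqref{metric_a1}). This shows that $\{\tau < \tau_0\} \cap E$ contains a past conical region of the form $\{(t,x) : |x| \geq 1, \; t \leq t_0 - (|x|-1)\}$ for a suitable $t_0$, inside which $v$ vanishes identically together with all its derivatives. Feeding this past conical region and the zero Dirichlet data on $\mR \times \p B$ into the standard uniqueness theorem for the Minkowski initial–boundary value problem on $E$ then yields $v(p_0) = 0$. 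Since $p_0 \in E$ was arbitrary, $v \equiv 0$ on $E$, which is the desired conclusion.

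The main obstacle will be implementing the last step cleanly: matching the $\tau$-past of $g$ with the Minkowski null cone structure of $E$ so that the past Cauchy data of $v$ vanishes on a slice large enough to reach an arbitrary $p_0 \in E$. Here the interplay between the Cauchy temporal function $\tau$ for $g$ and the ambient Minkowski time coordinate $t$ is delicate in general, and Proposition \ref{prop_tau_proper} is the essential tool, together with the Minkowski-spacelike Lipschitz control on $\Sigma_{\tau_0}$ outside the ball coming from \eqref{metric_a1}.
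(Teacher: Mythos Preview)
Your overall reduction is the same as the paper's: set $v = U_{\omega,s}^g - U_{\omega,s}^{\gm}$, observe that on the exterior region $X = \mR \times (\mR^n \setminus B)$ one has $\Box_{\gm} v = 0$, $v|_{\mR \times \p B} = 0$, and $v|_{\{\tau < \tau_0\}} = 0$, and then conclude $v \equiv 0$ from uniqueness for the exterior Dirichlet problem. The difference is in how the last step is justified, and this is where your argument has a genuine gap.

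The difficulty is regularity, not the past-matching that you flag as the main obstacle. A priori $v$ is only in $L^2_{\mathrm{loc}}$ (each plane wave is a Heaviside-type jump), and the ``standard uniqueness theorem'' you invoke for the mixed Cauchy--Dirichlet problem is an energy estimate that needs at least $H^1$ regularity up to the boundary to integrate by parts; the phrase ``the value $v(p_0)$'' already presumes pointwise values. Interior propagation of singularities does not suffice here, because null bicharacteristics in $X$ can hit $\mR \times \p B$ before reaching $\{\tau < \tau_0\}$. The paper handles this by first proving that $v$ is smooth in $\ol{X}$: since $v$ has zero Dirichlet data, the boundary propagation of singularities of Melrose--Sj\"ostrand (compressed generalized bicharacteristics; see \cite[Theorem 24.5.3]{hormander} or \cite[Theorem 7.4.3]{melrose2024}) shows $\mathrm{WF}_b(v|_X) = \emptyset$, because every such bicharacteristic in the Minkowski exterior reaches $\{\tau = \tau_0 - \varepsilon\}$ after at most one reflection (transversal or diffractive). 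Only after smoothness is in hand does the paper apply the classical uniqueness theorem \cite[Theorem 24.1.1]{hormander}.

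Once you accept this regularity step, the past-matching issue essentially evaporates: the level set $\{\tau = \tau_0\}$ is a Cauchy surface for $g$, hence in particular for $\gm$ in the exterior, so after smoothness there is no need to carve out a Minkowski past cone by hand. Your discussion of the Lipschitz graph of $\Sigma_{\tau_0}$ is correct but can be bypassed.
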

\begin{proof}
Write $V = U_{\omega,s}^g - U_{\omega,s}^{\gm}$. Then $V$ is an $L^2_{\mathrm{loc}}$ function in $\mR^{1+n}$ that satisfies 
\[
\Box_{\gm} V = 0 \text{ in $\mR \times (\mR^n \setminus \ol{B})$}, \qquad V|_{\mR \times \p B} = 0, \qquad V|_{\{\tau < \tau_0 \}} = 0.
\]
We wish to use propagation of singularities in the exterior region $X := \mR \times (\mR^n \setminus B)$ to conclude that $V$ is smooth in $X$. Indeed, any compressed generalized bicharacteristic of $\Box_{\gm}$ in $X$ reaches the Cauchy surface $\{ \tau = \tau_0 - \varepsilon\}$, near which $V$ vanishes, after at most one reflection on $\p B$ that is either transversal or diffractive. By propagation of singularities (see \cite[Theorem 7.4.3]{melrose2024} or \cite[Theorem 24.5.3]{hormander}) we have $\mathrm{WF}_b(V|_X) = \emptyset$, where $V|_X$ is defined by \cite[Corollary 18.3.31]{hormander}. By \cite[Theorem 18.3.27]{hormander} we have $V|_{X} \in C^{\infty}(X)$. Now that we have proved that $V$ is a smooth solution in $X$, uniqueness \cite[Theorem 24.1.1]{hormander} (or more precisely iteration of \cite[Theorems 23.2.7 and 24.1.4]{hormander}) implies that $V$ must vanish in $\mR \times (\mR^n \setminus \ol{B})$.
\end{proof}

At this point we can prove Theorems \ref{thm_pw_spw_scattering} and \ref{thm_eikonal_pw}.

\begin{proof}[Proof of Theorem \ref{thm_pw_spw_scattering}]
If $\mathcal{F}_{\omega}(g) = \mathcal{F}_{\omega}(\gm)$, Lemma \ref{lem_h_exterior} implies that 
\[
\WF(U_{\omega,s}^g|_{\{ |x| > 1 \}}) = \WF(U_{\omega,s}^{\gm}|_{\{ |x| > 1 \}}) = N^*(\{ t = x \cdot \omega + s \}) \cap T^*(\{ |x| > 1 \}).
\]
By propagation of singularities we also know that $\WF(U_{\omega,s}^g)$ is contained in the characteristic set of $\Box_g$ and it is invariant under the bicharacteristic flow there. The bicharacteristic curves are related to null geodesics as described in Lemma \ref{lemma_bichar_geodesic}. Combining all these facts with \eqref{metric_a1}, we see that
\begin{equation} \label{u_wf_pm}
\WF(U_{\omega,s}^g) \cap \pi^{-1}(\Sigma_{\pm}) = \{ (z, \lambda (1,-\omega)) \,:\, z \in \Sigma_{\pm, s \pm 1}, \, \lambda \neq 0 \}
\end{equation}
where $\pi: T^* \mR^{1+n} \to \mR^{1+n}$ is the natural projection.

On the other hand, $\WF(U_{\omega,s}^g)$ is the flowout of  $\{ (z, \lambda(1,-\omega)) \,:\, z \in \Sigma_{-, s - 1}, \, \lambda \neq 0 \}$ under the bicharacteristic flow for $\Box_g$ by Proposition \ref{prop_pw}. Using Lemma \ref{lemma_bichar_geodesic} again, it follows that 
\[
\WF(U_{\omega,s}^g) \cap \pi^{-1}(\Sigma_{+}) = \{ (\gamma_z(r), -\lambda \dot{\gamma}_z(r)^{\flat}) \,:\, z \in \Sigma_{-,s-1}, \, \gamma_z(r) \in \Sigma_+, \, \lambda \neq 0  \}.
\]
Comparing these two expressions for $\WF(U_{\omega,s}^g) \cap \pi^{-1}(\Sigma_{+})$, we see that any $\gamma_z$ that reaches $\Sigma_+$ must stay in $\{ x \cdot \omega > 1 \}$ afterwards and that 
\begin{align}
\beta_g(\Sigma_{-,s-1} \setminus \mathcal{T}_g) &= \{ (\gamma_z(r_+(z)), \dot{\gamma}_z(r_+(z))) \,:\, z \in \Sigma_{-,s-1} \setminus \mathcal{T}_g \} \label{beta_wf_eq} \\
 &= \{ (\tilde{z}, \lambda(\tilde{z})(1,\omega)) \,:\, \tilde{z} \in \Sigma_{+, s+1} \}, \notag
\end{align}
where $\lambda$ is some nowhere vanishing function on $\Sigma_+$.

One has $\lambda > 0$ since $\dot{\gamma}_z(r_+(z)))$ cannot point into $\{ x \cdot \omega < 1 \}$. Next we show that $\lambda$ is smooth. For $w \in \Sigma_+$, let $\eta_w$ be the $g$-geodesic with $\eta_w(0)=0$ and $\dot{\eta}_w(0) = (1,\omega)$. By \eqref{beta_wf_eq} any $\bar{w} \in \Sigma_+$ is of the form $\bar{w} = \gamma_{\bar{z}}(r_+(\bar{z}))$, and $\eta_{\bar{w}}$ meets $\Sigma_-$ transversally since $\gamma_{\bar{z}}$ does. By Lemma \ref{lemma_intersection} there is a smooth function $R$ such that $z(w) = \eta_w(-R(w)) \in \Sigma_-$ for $w$ near $\bar{w}$. Thus $w = \gamma_{z(w)}(r_+(z(w)))$ near $\bar{w}$. Since $\gamma_{\bar{z}}$ meets $\Sigma_+$ transversally, also $r_+$ is smooth near $\bar{z}$ by Lemma \ref{lemma_intersection}. We obtain the formula 
\[
\lambda(w) = \frac{1}{\sqrt{2}} |\dot{\gamma}_{z(w)}(r_+(z(w)))|_{\mathrm{Eucl}}.
\]
Thus $\lambda$ is smooth on $\Sigma_+$. This concludes the proof that $\beta_g(\Sigma_{-,s-1} \setminus \mathcal{T}_g) = \beta_{\gm}^{\lambda}(\Sigma_{-,s-1})$ where $\lambda \in C^{\infty}(\Sigma_+)$ is positive.
\end{proof}

\begin{proof}[Proof of Theorem \ref{thm_eikonal_pw}]
Write $\omega = e_1$. The conclusion of Theorem \ref{thm_geometric} is that 
\begin{align*}
\text{$\Phi: \Sigma_- \times \mR \to \mR^{1+n}$ is a diffeomorphism with $\dot{\gamma}_z(r) = \hat{\lambda}(1,e_1)$ when $\gamma_z(r) \notin \mR \times \ol{B}$}.
\end{align*}
Here $\hat{\lambda} \in C^{\infty}(\mR^{1+n})$ is positive and satisfies $\hat{\lambda} = 1$ in $\{ x_1 \leq -1 \}$. By \eqref{metric_a1}, we also have $\hat{\lambda} = 1$ in $\{ |y| \geq 1 \}$ if we write $z = (t, x_1, y)$. By Proposition \ref{prop_eikonal}, the fact that $\Phi$ is a diffeomorphism implies the existence of smooth solution $\varphi$ to the eikonal equation
\[
g(d\varphi, d\varphi) = 0, \qquad \varphi|_{\{ x_1 \leq -1 \}} = t-x_1.
\]
Moreover, when $\gamma_z(r) \notin \mR \times \ol{B}$ we have 
\[
d\varphi(\gamma_z(r)) = -\dot{\gamma}_z(r)^{\flat} = \hat{\lambda}(\gamma_z(r)) (1, -e_1).
\]
If we define $q(t,x) = t - x_1$, this can be written as 
\[
d \varphi = \hat{\lambda} \,d q \quad \text{ outside $\mR \times \ol{B}$}.
\]
Thus, outside $\mR \times \ol{B}$ we have 
\[
0 = d(d\varphi) = d(\hat{\lambda} \,dq) = d\hat{\lambda} \wedge dq.
\]
If we consider coordinates $(p,q,y)$ where $p = t+x_1$, this gives 
\[
\p_p \hat{\lambda} \,dp \wedge dq + \sum \p_{y_j} \hat{\lambda} \,dy_j \wedge dq = 0 \text{ outside $\mR \times \ol{B}$.}
\]
By linear independence $\p_p \hat{\lambda} = \p_{y_j} \hat{\lambda}  = 0$ outside $\mR \times \ol{B}$. Combining this with the condition $\hat{\lambda}|_{\{|y| \geq 1 \}} = 1$, we obtain $\hat{\lambda} = 1$ in $\{ x_1 \geq 1 \}$. Since $\hat{\lambda} = 1$ also in $\{ x_1 \leq -1 \}$, the condition \eqref{metric_a1} ensures that $\hat{\lambda} = 1$ outside $\mR \times \ol{B}$. Thus $d\varphi = dq$ outside $\mR \times \ol{B}$, so $\varphi - q$ is constant outside $\mR \times \ol{B}$, and this constant is $0$ by the condition $\varphi|_{\{ x_1 \leq -1 \}} = t-x_1$.

We have proved the claims about $\varphi$. Lemma \ref{lem_pw_conormal} implies that the plane waves have the required representation 
\[
U_{\omega,s}^g(t,x) = u_{\omega,s}(t,x) H(\varphi_{\omega}(t,x)-s) \text{ in $\mR^{1+n}$}
\]
where $u_{\omega,s}$ is smooth in $\{ \varphi_{\omega} \geq  s \}$.
\end{proof}

\begin{Remark}
The fact that $\hat{\lambda} \equiv 1$ would also follow from the conclusion in Proposition \ref{prop_nontrapping} that $z \mapsto \gamma_z(r_+(z))$ maps $\Sigma_{-,\sigma}$ onto $\Sigma_{+,\sigma+2}$ for any $\sigma \in \mR$. Combining that with the facts that $\varphi$ is constant along $\gamma_z$ and $\varphi|_{\{x_1 \leq -1\}} = t-x_1$ would imply that $\varphi|_{\{x_1 \geq 1 \}} = t-x_1$.
\end{Remark}

In particular, if $\mathcal{F}_{\Omega}(g) = \mathcal{F}_{\Omega}(\gm)$, then Theorems \ref{thm_pw_spw_scattering}--\ref{thm_eikonal_pw} imply the following statement:
\begin{equation} \label{dp}
\left\{ \begin{array}{c} \text{For any $\omega \in \Omega$ the eikonal equation $g(d\varphi_{\omega}, d\varphi_{\omega}) = 0$ has a unique smooth} \\[5pt]  
\text{solution in $\mR^{1+n}$ with $\varphi_{\omega} = t - x \cdot \omega$ outside $\mR \times \ol{B}$.}
\end{array} \right.
\end{equation}

We now invoke for the first time the unique continuation property \eqref{metric_a3}. This is needed to prove the crucial fact that the functions $\varphi_{\omega}$ also solve the wave equation.

\begin{Lemma} \label{lemma_varphi_wave}
Let  $g$ satisfy \eqref{metric_a1}--\eqref{metric_a3} and let $\varphi_{\omega}$ be as in \eqref{dp}. Suppose that for some fixed $s$ one has $U_{\omega,s}^g|_{\mR \times \p B} = U_{\omega,s}^{\gm}|_{\mR \times \p B}$. Then 
\[
\Box_g \varphi_{\omega} = 0 \text{ in $\{ (t,x) \in \mR^{1+n} \,:\, \varphi_{\omega}(t,x) = s \}$}.
\]
Moreover, one has 
\[
U_{\omega,s}^g = H(\varphi_{\omega}(t,x)-s) \text{ in $\mR^{1+n}$.}
\]
\end{Lemma}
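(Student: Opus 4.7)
The plan is to prove $u_{\omega,s} \equiv 1$ on $\{\varphi_\omega \geq s\}$; both conclusions then follow by substituting $U^g_{\omega,s} = H(\varphi_\omega - s)$ into $\Box_g U^g_{\omega,s} = 0$ and using the eikonal identity to compute $\Box_g H(\varphi_\omega - s) = \delta(\varphi_\omega - s)\Box_g \varphi_\omega$, which forces $\Box_g \varphi_\omega = 0$ on the null surface $\{\varphi_\omega = s\}$. First I would invoke Theorem \ref{thm_eikonal_pw} (applicable under \eqref{dp}) to write $U^g_{\omega,s} = u_{\omega,s} H(\varphi_\omega - s)$ with $u_{\omega,s}$ smooth on $\{\varphi_\omega \geq s\}$. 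Lemma \ref{lem_h_exterior} gives $U^g_{\omega,s} = U^{\gm}_{\omega,s} = H(\varphi_\omega - s)$ on $\mR \times (\mR^n \setminus \ol B)$ (using $\varphi_\omega = t - x \cdot \omega$ there), so $u_{\omega,s} \equiv 1$ and $\p_\nu u_{\omega,s} = 0$ on $\{\varphi_\omega \geq s\} \cap \mR \times \p B$. Setting $v := u_{\omega,s} - 1$, this says $\Box_g v = 0$ in $\{\varphi_\omega > s\}$ with zero Cauchy data on the cylindrical boundary.

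Next I would invoke \eqref{metric_a3}. By Lemma \ref{lemma_eikonal_top}, $\varphi_\omega|_{\mR \times \ol B}$ is proper, so there exists $T$ with $(T,\infty) \times \ol B \subset \{\varphi_\omega > s\}$; choosing $r > \max(T, R_0)$, the function $v$ is smooth on $(r,\infty) \times \ol B$ and solves $\Box_g v = 0$ with zero Cauchy data on $(r,\infty) \times \p B$, so \eqref{metric_a3} delivers $v \equiv 0$ in $(r_1, \infty) \times B$ for some $r_1 > r$. Equivalently, $U^g_{\omega,s} = H(\varphi_\omega - s)$ in $(r_1, \infty) \times B$.

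For the final propagation to all of $\mR^{1+n}$, I would set $W := U^g_{\omega,s} - H(\varphi_\omega - s)$ and argue $W \equiv 0$. This distribution vanishes for $\tau \ll 0$ by Proposition \ref{prop_pw} and the matching of initial data, outside $\mR \times \ol B$ by Lemma \ref{lem_h_exterior}, and in $(r_1, \infty) \times B$ by the previous step. Choosing $\rho$ via Proposition \ref{prop_tau_proper}(a) with $\Sigma_\rho \cap \mR \times \ol B \subset (r_1,\infty) \times \ol B$, one further obtains $W = 0$ for $\tau > \rho$, so $W$ is compactly supported. The eikonal identity $g(d\varphi_\omega, d\varphi_\omega) = 0$ yields $\Box_g W = -\delta(\varphi_\omega - s)\Box_g \varphi_\omega$, with support compactly contained in the null surface $\{\varphi_\omega = s\} \cap \mR \times \ol B$. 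Appealing to uniqueness of the Cauchy problem both forward (Proposition \ref{prop_wp}) and backward from $\Sigma_\rho$, the unique distribution produced by this forcing with vanishing past and future data must be trivial, giving $W \equiv 0$, hence $U^g_{\omega,s} = H(\varphi_\omega - s)$ globally and $\Box_g \varphi_\omega = 0$ on $\{\varphi_\omega = s\}$. The hard part will be this final backward-uniqueness step: one must leverage the compact support of $W$ together with the conormal structure of the forcing along the characteristic hypersurface $\{\varphi_\omega = s\}$ to deduce pointwise vanishing of $\Box_g \varphi_\omega$, not merely an integral identity.
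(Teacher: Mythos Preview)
Your approach is essentially the same as the paper's: set up $v=u_{\omega,s}-1$, use \eqref{metric_a3} and Lemma~\ref{lemma_eikonal_top} to kill $v$ at large times, and then push the vanishing backward via well-posedness. Your $W=U^g_{\omega,s}-H(\varphi_\omega-s)$ equals $(u_{\omega,s}-1)H(\varphi_\omega-s)$ in $\mR\times B$, which is exactly the paper's zero extension $\tilde v$, so the two arguments coincide up to notation.

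The gap is in your last step. The sentence ``the unique distribution produced by this forcing with vanishing past and future data must be trivial'' is not a valid principle: a compactly supported source can perfectly well produce a solution that is nonzero on a compact time slab and zero in both past and future. You then speculate about exploiting the ``conormal structure of the forcing,'' but no such subtlety is needed. The missing ingredient is \emph{finite speed of propagation} together with the monotonicity of $\varphi_\omega$ along causal curves (Lemma~\ref{lemma_eikonal}). Concretely: you already know $W=0$ for $\tau>\rho$ and $\Box_g W$ is supported in $\{\varphi_\omega=s\}$; the backward statement in Proposition~\ref{prop_wp} then gives
\[
\supp(W)\subset J^{-}(\{\varphi_\omega=s\})\subset\{\varphi_\omega\le s\},
\]
the last inclusion by Lemma~\ref{lemma_eikonal}(ii). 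On the other hand $\supp(W)\subset\{\varphi_\omega\ge s\}$ since both $U^g_{\omega,s}$ and $H(\varphi_\omega-s)$ are supported there (Lemma~\ref{lem_h_pwe}). Hence $\supp(W)\subset\{\varphi_\omega=s\}$, a set of measure zero; since $W\in L^2_{\mathrm{loc}}$ this forces $W=0$. (Equivalently, and this is how the paper phrases it, you immediately get $u_{\omega,s}=1$ on $\{\varphi_\omega>s\}$.) The forward Cauchy problem and your concern about $W=0$ for $\tau\ll 0$ are not needed here.
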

\begin{proof}
Define $v = u_{\omega,s}-1$ in $Q_+$, where 
\begin{align*}
Q_+ &= \{ (t,x) \,:\, \varphi_{\omega} > s, \ x \in B \}, \\
\Gamma_+ &= \{ (t,x) \,:\, \varphi_{\omega} > s, \ x \in \p B \}.
\end{align*}
Then $v \in C^{\infty}(\ol{Q}_+)$ satisfies 
\begin{equation} \label{w_ucp_cond}
\Box_g v = 0 \text{ in $Q_+$}, \qquad v|_{\Gamma_+} = \p_{\nu} v|_{\Gamma_+} = 0.
\end{equation}
Lemma \ref{lemma_eikonal_top} guarantees that $(r,\infty) \times \ol{B} \subset \{ \varphi_{\omega} > s \}$ for some $r$. By the unique continuation property \eqref{metric_a3} there is $r_1$ such that $v = 0$ when $t > r_1$. Next we consider the zero extension of $v$ defined by 
\[
\tilde{v} = \left\{ \begin{array}{cl} v, & (t,x) \in Q_+, \\[3pt] 0, & \text{otherwise}. \end{array} \right.
\]
Then $\tilde{v} \in L^2_{\mathrm{loc}}(\mR^{1+n})$ and $\tilde{v}$ is $H^2$ across $\Gamma_+$ by \eqref{w_ucp_cond}. Writing $f = \Box_g \tilde{v}$, it follows that 
\[
\Box_g \tilde{v} = f \text{ in $\mR^{1+n}$},
\]
where $\supp(f) \subset \{ \varphi_{\omega} \leq s \}$ by \eqref{w_ucp_cond}. Moreover, $\tilde{v} = 0$ when $t > r_1$. By Proposition \ref{prop_tau_proper} (a) there is a Cauchy surface $\{ \tau = \tau_0 \}$ such that $\{ \tau \geq \tau_0 \} \cap (\mR \times \ol{B}) \subset \{ t > r_1 \}$. In particular, $\tilde{v}|_{\{ \tau > \tau_0 \}} = 0$. Then uniqueness in the backward Cauchy problem and finite propagation speed in Proposition \ref{prop_wp} imply that $\supp(\tilde{v}) \subset J^{-}(\supp(f)) \subset J^{-}(\{ \varphi_{\omega} \leq s \})$. Since by Lemma \ref{lemma_eikonal} the function $\varphi_{\omega}$ is nonincreasing along past-directed causal curves, we get $\supp(\tilde{v}) \subset \{ \varphi_{\omega} \leq s \}$. This implies that $u_{\omega,s} = 1$ in $Q_+$.

In particular, we have 
\[
U_{\omega,s}^g = H(\varphi_{\omega}-s) \text{ in $\mR^{1+n}$}.
\]
Then $\Box_g(H(\varphi_{\omega}-s)) = 0$ in $\mR^{1+n}$. A direct computation gives 
\[
\Box_g(H(\varphi_{\omega}-s)) = -g(d\varphi_{\omega},d\varphi_{\omega}) H''(\varphi_{\omega}-s) + (\Box_g \varphi_{\omega}) H'(\varphi_{\omega}-s).
\]
Since $g(d\varphi_{\omega},d\varphi_{\omega}) = 0$, it follows that 
\[
\Box_g \varphi_{\omega}|_{\{\varphi_{\omega}=s\} } = 0. \qedhere
\]
\end{proof}

In the remainder of this section we assume that $g$ satisfies \eqref{metric_a1}--\eqref{metric_a3}, that $\mathcal{F}_{\Omega}(g) = \mathcal{F}_{\Omega}(\gm)$, and that for any $\omega \in \Omega$, $\varphi_{\omega}$ is as in \eqref{dp}. We have proved that 
\[
\Box_g \varphi_{\omega}|_{ \{ \varphi_{\omega} = s \} } = 0.
\]
Since we have measurements for all $s \in \mR$ and $\omega \in \Omega$, we in fact have 
\[
\Box_g \varphi_{\omega} = 0 \text{ in $\mR^{1+n}$ for any $\omega \in \Omega$.}
\]

In the Minkowski case $\varphi_{\omega} = t - x \cdot \omega$ everywhere. We wish to have a similar representation in the general case. Define 
\begin{align*}
\sigma &= \frac{1}{2} (\varphi_{e_1} + \varphi_{-e_1}), \\
\alpha_j &= \frac{1}{2} (-\varphi_{e_j} + \varphi_{-e_j}).
\end{align*}
Note that $\Box_g \sigma = \Box_g \alpha_j = 0$ in $\mR^{1+n}$, and $\sigma = t$ and $\alpha_j = x_j$ when $x \notin \ol{B}$.

\begin{Lemma} \label{lemma_aom}
For any $\omega \in \Omega$ one has 
\[
\varphi_{\omega} = \sigma - \sum \omega_j \alpha_j.
\]
\end{Lemma}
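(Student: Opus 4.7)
The plan is to define
\[
\psi_\omega := \sigma - \sum_{j=1}^n \omega_j \alpha_j
\]
and show, by combining the unique continuation property \eqref{metric_a3} with backward uniqueness for the wave equation, that $w := \varphi_\omega - \psi_\omega$ vanishes identically on $\mR^{1+n}$.

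First I would verify two properties of $\psi_\omega$. (i) $\Box_g \psi_\omega = 0$ on $\mR^{1+n}$: this is immediate from linearity of $\Box_g$ together with Lemma \ref{lemma_varphi_wave}, which gives $\Box_g \varphi_{\pm e_j} = 0$ globally, since each $\varphi_{\pm e_j}$ is one of the $\varphi_\omega$ for $\omega \in \Omega$. (ii) $\psi_\omega = t - x \cdot \omega$ outside $\mR \times \ol{B}$: there $\varphi_{\pm e_j} = t \mp x_j$, so $\sigma = \frac{1}{2}((t-x_1)+(t+x_1)) = t$ and $\alpha_j = \frac{1}{2}(-(t-x_j)+(t+x_j)) = x_j$, which yields $\psi_\omega = t - \sum_j \omega_j x_j$. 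For each of the three types of $\omega \in \Omega$ a direct check shows this agrees with $\varphi_\omega = t - x \cdot \omega$ outside the cylinder.

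Consequently $w \in C^\infty(\mR^{1+n})$ solves $\Box_g w = 0$ and vanishes outside $\mR \times \ol{B}$. In particular, for any $r > R_0$, the restriction of $w$ to $(r,\infty) \times \ol{B}$ has zero Dirichlet and Neumann traces on $(r,\infty) \times \p B$ (both vanish from the exterior side and hence from the interior by smoothness of $w$). The unique continuation assumption \eqref{metric_a3} then supplies some $r_1 > r$ with $w \equiv 0$ in $(r_1,\infty) \times B$. Combined with the exterior vanishing this gives
\[
w = 0 \text{ on } \{(t,x) \in \mR^{1+n} : t > r_1 \text{ or } |x| > 1\},
\]
and by Proposition \ref{prop_tau_proper}(a) this set contains a full superlevel set $\{\tau > \tau_0\}$ of the Cauchy temporal function for $\tau_0$ sufficiently large.

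Finally I would invoke the uniqueness part of Proposition \ref{prop_wp} for the backward Cauchy problem with vanishing source: since $\Box_g w = 0$ and $w|_{\{\tau > \tau_0\}} = 0$, we conclude $w \equiv 0$ on $\mR^{1+n}$, i.e.\ $\varphi_\omega = \psi_\omega$. The argument essentially mirrors the unique continuation + backward propagation step already used in Lemma \ref{lemma_varphi_wave}, so the main conceptual point is not an obstacle but rather the observation that the linear combinations $\psi_\omega$ a priori only know about the wave equation and the exterior values of the $\varphi_{\pm e_j}$, and it is precisely the hypothesis \eqref{metric_a3} together with the global hyperbolicity of $g$ that upgrades agreement outside the cylinder to equality everywhere.
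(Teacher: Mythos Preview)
Your proof is correct and follows essentially the same approach as the paper: define the difference $v=\varphi_\omega-(\sigma-\sum_j\omega_j\alpha_j)$, observe that $\Box_g v=0$ and $v=0$ outside $\mR\times\ol{B}$, apply the unique continuation property \eqref{metric_a3} to obtain vanishing for large $t$, and conclude via backward uniqueness (Propositions \ref{prop_tau_proper}(a) and \ref{prop_wp}). Your write-up is somewhat more detailed, but the logical structure is identical to the paper's argument.
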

\begin{proof}
Write $v = \varphi_{\omega} - (\sigma - \sum \omega_j \alpha_j)$. Then 
\[
\Box_g v = 0 \text{ in $\mR^{1+n}$}, \qquad v = 0 \text{ when $x \notin \ol{B}$.}
\]
By the unique continuation property \eqref{metric_a3}, it follows that $v = 0$ when $t > T$ for some $T$. Proposition \ref{prop_tau_proper} (a) ensures that there is a Cauchy surface $\{\tau=\tau_0\}$ with $\{ \tau \geq \tau_0 \} \cap (\mR \times \ol{B}) \subset \{ t > T \}$. Uniqueness in the backward Cauchy problem then implies that $v = 0$ in $\mR^{1+n}$.
\end{proof}

We now use Lemma \ref{lemma_aom} to show the following.

\begin{Lemma} \label{lemma_h_diagonal}
One has 
\begin{align*}
g(d\sigma,d\sigma) &= -\gamma(t,x), \\
g(d\sigma,d\alpha_j) &= 0, \\
g(d\alpha_j, d\alpha_k) &= \gamma(t,x) \delta_{jk},
\end{align*}
where $\gamma > 0$ is smooth and $\gamma = 1$ for $x \notin \ol{B}$.
\end{Lemma}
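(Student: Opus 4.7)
The proof will be a direct algebraic extraction from the eikonal equations $g(d\varphi_\omega, d\varphi_\omega) = 0$ for $\omega \in \Omega$, together with the representation from Lemma \ref{lemma_aom}, followed by a time-cone argument to establish positivity.

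First, I substitute $d\varphi_\omega = d\sigma - \sum_k \omega_k \, d\alpha_k$ from Lemma \ref{lemma_aom} into $g(d\varphi_\omega, d\varphi_\omega) = 0$ to obtain, for every $\omega \in \Omega$,
\[
g(d\sigma,d\sigma) - 2 \sum_k \omega_k \, g(d\sigma, d\alpha_k) + \sum_{k,l} \omega_k \omega_l \, g(d\alpha_k, d\alpha_l) = 0.
\]
Testing at $\omega = e_j$ and $\omega = -e_j$ and taking half-sum and half-difference yields
\[
g(d\sigma, d\alpha_j) = 0, \qquad g(d\alpha_j, d\alpha_j) = -g(d\sigma, d\sigma),
\]
for every $j = 1, \dots, n$. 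Testing next at $\omega = \frac{1}{\sqrt{2}}(e_j + e_k)$ for $j < k$ and feeding in these identities yields $g(d\alpha_j, d\alpha_k) = 0$. Setting $\gamma := -g(d\sigma, d\sigma)$ gives the three displayed equalities; smoothness of $\gamma$ is inherited from $\sigma$, and $\gamma \equiv 1$ outside $\mR \times \ol{B}$ because there $\sigma = t$ and $g = \gm$.

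The main point is to show positivity of $\gamma$. Since $d\sigma = \tfrac{1}{2}(d\varphi_{e_1} + d\varphi_{-e_1})$ and both $d\varphi_{\pm e_1}$ are null,
\[
g(d\sigma, d\sigma) = \tfrac{1}{2} g(d\varphi_{e_1}, d\varphi_{-e_1}).
\]
By Lemma \ref{lemma_eikonal}(i) the vector fields $Z_\omega := -(d\varphi_\omega)^\sharp$ are future-directed null, so Lemma \ref{lemma_timecone} gives $g(Z_{e_1}, Z_{-e_1}) \leq 0$, with equality only when $Z_{-e_1} = \lambda Z_{e_1}$ for some $\lambda > 0$. The last conclusion in Proposition \ref{prop_eikonal} rules this out, since it would force $e_1 = -e_1$. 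Hence $g(d\sigma, d\sigma) < 0$ everywhere, so $\gamma > 0$.

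The algebraic part is essentially forced by the specific choice of $\Omega$, which is designed precisely to encode the full polarization identity for the symmetric bilinear form $(\omega, \tilde\omega) \mapsto g(d\varphi_\omega, d\varphi_{\tilde\omega})$ restricted to test directions. The mildly subtle point is the strict sign in the time-cone inequality, which relies both on the time-orientation of $g$ and on the non-proportionality of $d\varphi_{e_1}$ and $d\varphi_{-e_1}$ provided by Proposition \ref{prop_eikonal}.
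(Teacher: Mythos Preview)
Your proof is correct and follows essentially the same route as the paper: the algebraic extraction from the eikonal equations at $\omega = \pm e_j$ and $\omega = \frac{1}{\sqrt{2}}(e_j+e_k)$ is identical, and the positivity argument via Lemma~\ref{lemma_timecone} and the last statement of Proposition~\ref{prop_eikonal} matches the paper's, the only cosmetic difference being that you argue directly for the strict inequality while the paper phrases it as a contradiction from $\gamma(t_0,x_0)=0$.
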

\begin{proof}
For $\omega \in \Omega$  we have 
\begin{align*}
0 &= g(d \varphi_{\omega}, d\varphi_{\omega}) = g(d\sigma-\omega_j d\alpha_{j}, d\sigma-\omega_k d\alpha_{k}) \\
 &= g(d\sigma,d\sigma) - 2 \omega_j g(d\sigma, d\alpha_{j}) + g(d\alpha_{j}, d\alpha_{k}) \omega_j \omega_k.
\end{align*}
In other words 
\[
[g(d\sigma,d\sigma)\delta_{jk} + g(d\alpha_{j}, d\alpha_{k})] \omega_j \omega_k - 2 \omega_j g(d\sigma, d\alpha_{j}) = 0, \qquad \omega \in \Omega.
\]
Choosing $\omega = \pm e_j$ and subtracting the resulting equations, we get 
\[
g(d\sigma, d\alpha_{j}) = 0, \qquad 1 \leq j \leq n.
\]
Then, writing $\gamma(t,x) := -g(d\sigma, d\sigma)$ and choosing $\omega=e_j$, we obtain 
\begin{align*}
g(d\alpha_j, d\alpha_j) = \gamma, \qquad 1 \leq j \leq n.
\end{align*}
Finally, choosing $\omega = \frac{1}{\sqrt{2}}(e_j + e_k)$ with $j \neq k$, we obtain 
\begin{align*}
g(d\alpha_j, d\alpha_k) = 0, \qquad j \neq k.
\end{align*}

It remains to show that the conformal factor $\gamma$ can never vanish. We argue by contradiction and suppose that $\gamma(t_0,x_0) = 0$ for some $(t_0,x_0)$. We first observe that $\sigma = \frac{1}{2}(\varphi_{e_1} + \varphi_{-e_1})$ satisfies 
\[
4 g(d\sigma, d\sigma) = g(d\varphi_{e_1}, d\varphi_{e_1}) + 2 g(d\varphi_{e_1}, d\varphi_{-e_1}) + g(d\varphi_{-e_1}, d\varphi_{-e_1}).
\]
Since $\gamma(t_0,x_0) = 0$ and since $d\varphi_{\pm e_1}$ are null, at $(t_0,x_0)$ one must have 
\[
g(d\varphi_{e_1}, d\varphi_{-e_1}) = 0.
\]
By Lemma \ref{lemma_timecone}, any null future-directed covectors $d\varphi_{\pm e_1}$ satisfying the above condition must also satisfy $d\varphi_{-e_1} = \lambda d\varphi_{e_1}$ for some $\lambda > 0$. Then Proposition \ref{prop_eikonal}   gives that $e_1 = -e_1$, which is a contradiction. We have proved that $\gamma$ is nonvanishing, and by connectedness $\gamma > 0$ since $\gamma = 1$ outside $\ol{B}$.
\end{proof}

Write 
\[
F: \mR^{1+n} \to \mR^{1+n}, \ \ F(t,x) = (\sigma(t,x), \alpha_1(t,x), \ldots, \alpha_{n}(t,x)).
\]
If $x \notin \ol{B}$ one has $F(t,x) = (t, x)$. The result of Lemma \ref{lemma_h_diagonal} may be rewritten in matrix form as 
\begin{equation} \label{g_conformal}
(DF) g^{-1} (DF)^t = \gamma \gm^{-1}.
\end{equation}

\begin{Lemma} \label{lemma_f_diffeo}
$F$ is a diffeomorphism.
\end{Lemma}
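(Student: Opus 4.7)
The plan is to verify the two hypotheses of the Hadamard global inverse function theorem (Theorem \ref{thm_hadamard}): that $DF$ is invertible everywhere and that $F$ is proper. Since $\mathbb{R}^{1+n}$ is simply connected, this gives that $F$ is a diffeomorphism.

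First, invertibility of $DF$ is a direct consequence of \eqref{g_conformal}. Since $\gamma > 0$ and $\gm^{-1}$ is invertible, the right-hand side of \eqref{g_conformal} has nonzero determinant everywhere; hence $\det(DF)^2 \det(g^{-1}) = \det(\gamma \gm^{-1}) \neq 0$, and $DF$ is invertible at every point. In particular $F$ is a local diffeomorphism.

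For properness, let $K \subset \mathbb{R}^{1+n}$ be compact and suppose $(t_j,x_j) \in F^{-1}(K)$. If $|(t_j,x_j)| \to \infty$ along some subsequence, pass to a further subsequence in one of two cases. If $x_j \notin \ol{B}$ infinitely often, then along that sub-subsequence $F(t_j,x_j) = (t_j,x_j)$, which must stay in $K$, contradicting unboundedness. Otherwise $x_j \in \ol{B}$ eventually, so $x_j$ is bounded and necessarily $|t_j| \to \infty$. By Lemma \ref{lemma_eikonal_top} applied to $\varphi_{e_1}$ and $\varphi_{-e_1}$, both $\varphi_{e_1}(t_j,x_j)$ and $\varphi_{-e_1}(t_j,x_j)$ tend to $\pm \infty$ with the same sign as $t_j$; therefore $\sigma(t_j,x_j) = \tfrac{1}{2}(\varphi_{e_1}+\varphi_{-e_1})(t_j,x_j) \to \pm\infty$. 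But $\sigma(t_j,x_j)$ is the first coordinate of $F(t_j,x_j) \in K$ and hence bounded, a contradiction. Thus $F^{-1}(K)$ is bounded, and since it is closed by continuity of $F$, it is compact.

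Having verified properness and the invertibility of $DF$, Theorem \ref{thm_hadamard} applies and yields that $F: \mathbb{R}^{1+n} \to \mathbb{R}^{1+n}$ is a diffeomorphism. I do not anticipate a serious obstacle here; the main conceptual input (the constancy of $\varphi_\omega$ along the null geodesics and the resulting properness of $\varphi_\omega|_{\mR \times \ol{B}}$) has already been packaged into Lemma \ref{lemma_eikonal_top}, which makes the passage from local to global diffeomorphism essentially immediate.
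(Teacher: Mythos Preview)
Your proof is correct and follows essentially the same approach as the paper: invertibility of $DF$ from \eqref{g_conformal}, then properness via Lemma~\ref{lemma_eikonal_top}, then Hadamard. The only cosmetic difference is that the paper bounds $\varphi_{e_1}=\sigma-\alpha_1$ using the boundedness of both coordinates $\sigma$ and $\alpha_1$ of $F(t_j,x_j)$ and then invokes the properness of $\varphi_{e_1}|_{\mR\times\ol B}$, whereas you invoke the $t_j\to\pm\infty\Rightarrow\varphi_\omega(t_j,x_j)\to\pm\infty$ statement for both $\omega=\pm e_1$ and conclude $\sigma\to\pm\infty$ directly; these are two readings of the same lemma.
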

\begin{proof}
By taking determinants in \eqref{g_conformal} and using that $\gamma > 0$ we see that $DF$ is invertible everywhere. By the Hadamard global inverse function theorem (Theorem \ref{thm_hadamard}) it is enough to show that $F: \mR^{1+n} \to \mR^{1+n}$ is proper. To prove this, let $K \subset \mR^{1+n}$ be compact. Then $F^{-1}(K)$ is closed and we need to show that $F^{-1}(K)$ is bounded. Now $F^{-1}(K) \cap (\mR \times \ol{B}^c) = K \cap (\mR \times \ol{B}^c)$ is bounded, so it is sufficient to show that $F^{-1}(K) \cap (\mR \times \ol{B})$ is bounded.

We argue by contradiction and suppose that there is a sequence $(t_j, x_j) \in F^{-1}(K) \cap (\mR \times \ol{B})$ with $t_j \to \infty$ (the case where $t_j \to -\infty$ is analogous). Since $K$ is compact one has  $|\sigma(t_j,x_j)| \leq C$ and $|\alpha_1(t_j,x_j)| \leq C$ for all $j$. Moreover, since $\varphi_{e_1} = \sigma - \alpha_1$, we have 
\[
|\varphi_{e_1}(t_j,x_j)| \leq 2 C.
\]
By Lemma \ref{lemma_eikonal_top} the map $\varphi_{e_1}|_{\mR \times \ol{B}}$ is proper. This implies that $|t_j| \leq C'$ for some $C'$, which is a contradiction.
\end{proof}

The equation \eqref{g_conformal} can be rewritten as 
\[
(F^{-1})^* g = \kappa \gm
\]
for the positive function $\kappa = \gamma^{-1}$ with $\kappa = 1$ for $x \notin \ol{B}$. Since $F$ is the identity for $x \notin \ol{B}$, the boundary measurements for $g$ and $(F^{-1})^* g$ agree. The proof is thus concluded by the following result.

\begin{Proposition} \label{prop_conformal_factor}
Let $g$ satisfy \eqref{metric_a1}--\eqref{metric_a3}, let $n \geq 2$, and let $\kappa > 0$ be smooth with $\kappa = 1$ outside $\mR \times \ol{B}$. If 
\[
\mathcal{F}_{e_1}(\kappa \gm) = \mathcal{F}_{e_1}(\gm),
\]
then $\kappa = 1$.
\end{Proposition}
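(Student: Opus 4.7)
The plan is to turn the hypothesis into the single scalar identity $\Box_g(t-x_1)\equiv 0$ and then observe that this is a first order transport equation for $\kappa^{(n-1)/2}$ along the null lines of $\gm$ with direction $(1,e_1)$, whose only solution agreeing with $\kappa=1$ outside the cylinder is $\kappa \equiv 1$.

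First, for $g=\kappa\gm$ one has $g^{-1}=\kappa^{-1}\gm^{-1}$, so the flat function $\varphi(t,x):=t-x_1$ is automatically a smooth, globally defined solution of the eikonal equation
\[
g(d\varphi,d\varphi)=\kappa^{-1}\gm(d\varphi,d\varphi)=0 \quad\text{in } \mR^{1+n},
\]
and it equals $t-x\cdot e_1$ outside $\mR\times\ol{B}$. The hypotheses \eqref{metric_a1}--\eqref{metric_a3} all hold for $\kappa\gm$, so Theorems \ref{thm_pw_spw_scattering} and \ref{thm_eikonal_pw} apply and identify $\varphi=t-x_1$ with the eikonal function $\varphi_{e_1}$ of \eqref{dp}.

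Second, since $\mathcal{F}_{e_1}(g)=\mathcal{F}_{e_1}(\gm)$ is equality of restrictions at every time delay $s\in\mR$, Lemma \ref{lemma_varphi_wave} gives $\Box_g(t-x_1)=0$ on each hyperplane $\{t-x_1=s\}$. These level sets foliate $\mR^{1+n}$, so $\Box_g(t-x_1)\equiv 0$ on $\mR^{1+n}$. Using $|g|=\kappa^{n+1}$ and $g^{jk}=\kappa^{-1}\gm^{jk}$, a direct computation shows
\[
\Box_g(t-x_1)=\kappa^{-(n+1)/2}(\p_t+\p_{x_1})\kappa^{(n-1)/2},
\]
so $(\p_t+\p_{x_1})\kappa^{(n-1)/2}=0$ in $\mR^{1+n}$.

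Therefore $\kappa^{(n-1)/2}$ is constant along every characteristic line $r\mapsto (t_0+r,x_{0,1}+r,x_0')$. Each such line exits $\mR\times\ol{B}$ as $r\to+\infty$, where $\kappa\equiv 1$, hence $\kappa^{(n-1)/2}\equiv 1$ on the line; the assumption $n\ge 2$ enters precisely here to guarantee $(n-1)/2>0$ so that the root can be taken, giving $\kappa\equiv 1$. The only nontrivial input is the step $\Box_g(t-x_1)\equiv 0$, which itself rests on Lemma \ref{lemma_varphi_wave} (using unique continuation \eqref{metric_a3} and backward Cauchy uniqueness); granted that, the remainder collapses to a scalar transport equation with trivial exterior data, and there is no further obstacle.
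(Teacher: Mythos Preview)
Your proof is correct and follows essentially the same route as the paper: exhibit $\varphi=t-x_1$ as the global eikonal solution for $\kappa\gm$, invoke Lemma~\ref{lemma_varphi_wave} at every delay $s$ to obtain $\Box_{\kappa\gm}(t-x_1)=0$, compute this as the transport equation $(\p_t+\p_{x_1})\kappa^{(n-1)/2}=0$, and integrate along the null lines using $\kappa=1$ outside the cylinder. The detour through Theorems~\ref{thm_pw_spw_scattering} and~\ref{thm_eikonal_pw} is unnecessary (the paper simply observes directly that $t-x_1$ solves the eikonal equation for any conformal multiple of $\gm$), and the paper uses the initial condition at $x_1=-1$ rather than the exit at $x_1\to+\infty$, but these are cosmetic differences.
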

\begin{proof}
We repeat the argument above with the choice $g = \kappa \gm$. By Lemma \ref{lem_h_exterior} we have $U_{e_1,s}^{\kappa \gm} = U_{e_1,s}^{\gm}$ outside $\mR \times \ol{B}$ for any $s \in \mR$. Since $g = \kappa \gm$, the eikonal equation for $g$ is the same as for $\gm$, and thus the function 
\[
\varphi(t,x) = t-x_1
\]
is a global smooth solution of $g(d\varphi,d\varphi) = 0$. Lemma \ref{lemma_varphi_wave} then ensures that 
\[
\Box_{\kappa \gm} (t-x_1) = 0 \text{ in $\mR^{n+1}$}.
\]
This equation can be rewritten as 
\[
(\p_t + \p_{x_1}) (\kappa^{\frac{n-1}{2}}) = 0.
\]
Since $\kappa = 1$ when $x_1 = -1$, this transport equation gives that $\kappa = 1$ everywhere.
\end{proof}

\begin{Remark} \label{rmk_onedim}
If $n = 1$, Proposition \ref{prop_conformal_factor} fails since $\kappa^{\frac{n-1}{2}} \equiv 1$. Thus we can only conclude that $(F^{-1})^* g = \kappa \gm$ where $F = \mathrm{id}$ and $\kappa = 1$ outside $\mR \times \ol{B}$. This is the best possible result for $n=1$, since in this case one has $\Box_{\kappa g} = \kappa^{-1} \Box_g$ and therefore also the boundary measurements have a conformal invariance.
\end{Remark}

\begin{proof}[Proof of Theorem \ref{thm_rigidity}]
Combining Theorems \ref{thm_pw_spw_scattering}--\ref{thm_eikonal_pw} with Lemmas \ref{lemma_aom}--\ref{lemma_f_diffeo} and Proposition \ref{prop_conformal_factor} yields Theorem \ref{thm_rigidity}.
\end{proof}

\begin{proof}[Proof of Corollary \ref{cor_compact_perturbation}]
If $g = \gm$ outside $(-T,T) \times \ol{B}$ and if $I = (r,\infty)$ where $r > T$, then any solution of 
\[
\Box_g u = 0 \text{ in $I \times B$}, \qquad u|_{I \times \p B} = \p_{\nu} u|_{I \times \p B} = 0
\]
must satisfy $u = 0$ in $(r+1, \infty) \times B$ by the Holmgren-John uniqueness theorem or by propagation of analytic singularities (see \cite[Theorems 8.6.5 and 8.6.13]{hormander}). Thus the unique continuation property \eqref{metric_a3} is valid. Theorem \ref{thm_rigidity} shows that $F^* g = \gm$ for some diffeomorphism with $F = \mathrm{id}$ outside $\mR \times \ol{B}$. Finally, since $g = \gm$ outside $(-T,T) \times \ol{B}$, the eikonal solutions satisfy $\varphi_{\omega} = t - x \cdot \omega$ for $|t| \geq T+2$. From the definitions of $\sigma$ and $\alpha_j$ we see that $F = \mathrm{id}$ when $|t| \geq T+2$.
\end{proof}

\begin{proof}[Proof of Corollary \ref{cor_dnmap}]
By Proposition \ref{prop_tau_proper}, the map $\tau|_{\mR \times \ol{B}}$ is proper and surjective. Therefore the Dirichlet problem on $\mR \times B$ with zero Cauchy data for $\tau \ll 0$ is well-posed by \cite[Theorem 24.1.1]{hormander}, and the map $\Lambda_g^{\mathrm{Hyp}}$ is well defined on $C^{\infty}_c(\mR \times \p B)$. Assume that $\Lambda_g^{\mathrm{Hyp}} = \Lambda_{\gm}^{\mathrm{Hyp}}$. We wish to show that $U_{\omega,s}^g|_{\mR \times \p B} = U_{\omega,s}^{\gm}|_{\mR \times \p B}$ for all $\omega \in S^{n-1}$ and $s \in \mR$. This can be done by approximating the boundary values of plane waves by smooth functions.

Choose $\varphi_j \in C^{\infty}_c(\mR)$ so that $\supp(\varphi_j) \subset [-1,\infty)$ and $\varphi_j \to H$ in the sense of distributions, and let $u_j^{\gm} = \varphi_j(t-x\cdot \omega - s)$. Then $u_j^{\gm}$ solves $\Box_{\gm} u_j^{\gm} = 0$ in $\mR^{1+n}$, $\supp(u_j^{\gm}) \subset \{ t - x \cdot \omega \geq s -1 \}$, and $u_j^{\gm} \to U_{\omega,s}^{\gm} = H(t - x \cdot \omega - s)$ in the sense of distributions. Write $f_j = u_j^{\gm}|_{\mR \times \p B}$, so $\supp(f_j) \subset [s-2,\infty) \times \p B$, and let $\hat{u}_j \in C^{\infty}(\mR \times \ol{B})$ be the solution of  
\[
\Box_g \hat{u}_j = 0 \text{ in $\mR \times B$}, \qquad \hat{u}_j|_{\mR \times \p B} = f_j, \qquad \hat{u}_j|_{\{ \tau < \tau_0 \} \cap (\mR \times \ol{B})} = 0,
\]
where $\tau_0$ is chosen using Proposition \ref{prop_tau_proper} (a) such that 
\[
[s-2, \infty) \times \ol{B} \subset \{ \tau > \tau_0 \}.
\]
By the assumption $\Lambda_g^{\mathrm{Hyp}} = \Lambda_{\gm}^{\mathrm{Hyp}}$, the Cauchy data of $\hat{u}_j$ and $ u_j^{\gm}$ agree on $\mR \times \p B$. We define 
\[
u_j = \left\{ \begin{array}{cl} \hat{u}_j & \text{in $\mR \times B$}, \\ u_j^{\gm} & \text{outside $\mR \times B$}. \end{array} \right.
\]
Then $u_j \in H^2_{\mathrm{loc}}(\mR^{1+n})$, and therefore $u_j$ solves $\Box_g u_j = 0$ in $\mR^{1+n}$.

By the condition $[s-2, \infty) \times \ol{B} \subset \{ \tau > \tau_0 \}$, one has $u_j|_{\{ \tau < \tau_0 \}  \cap (\mR \times \ol{B})} = u_j^{\gm}|_{\{ \tau < \tau_0 \}  \cap (\mR \times \ol{B})} = 0$, and by definition $u_j$ and $u_j^{\gm}$ agree in $\{\tau < \tau_0 \} \cap (\mR \times B^c)$. Since $\Box_g u_j = 0$ in $\mR^{1+n}$ and $u_j|_{\{ \tau < \tau_0 \}} = u_j^{\gm}|_{\{ \tau < \tau_0 \}}$, uniqueness of the Cauchy problem yields that 
\[
u_j = u_j^{\gm} - G_+(\Box_g(u_j^{\gm}))
\]
where $G_+$ is the forward solution operator for Cauchy surface $\{ \tau = \tau_0 \}$. Since $u_j^{\gm} \to U_{\omega,s}^{\gm}$ in the sense of distributions and $G_+$ is continuous \cite[Lemma 4.1]{bar2015}, we also have 
\[
u_j \to U_{\omega,s}^{\gm} - G_+(\Box_g(U_{\omega,s}^{\gm})) = U_{\omega,s}^g \text{ in the sense of distributions.}
\]
By taking distributional traces on $\mR \times \p B$ using \cite[Theorem 8.4.2 and Corollary 8.2.7]{hormander}, which is possible since the wave front sets of the solutions $u_j$ and $U_{\omega,s}^g$ are disjoint from $N^*(\mR \times \p B)$, we obtain 
\[
U_{\omega,s}^g|_{\mR \times \p B} = \lim_{j \to \infty} u_j|_{\mR \times \p B} = U_{\omega,s}^{\gm}|_{\mR \times \p B}.
\]
Thus we have proved that the boundary values of plane waves for $g$ and $\gm$ agree. The corollary now follows from Theorem \ref{thm_rigidity}.
\end{proof}


\appendix

\section{Additional proofs} \label{sec_app}

\subsection{Eikonal equation}

In this section we give the proof of Proposition \ref{prop_eikonal}. As preparation, we first prove a version of this result that is valid in suitable open sets.

\begin{Lemma} \label{lemma_eikonal1}
Let $g$ be a smooth Lorentzian metric in $\mR^{1+n}$ with $g = \gm$ in $\{ x \cdot \omega \leq -1 \}$. Let $\theta: \Sigma_- \to \mR_+ \cup \{\infty\}$ be a lower semicontinuous function such that $\gamma_z$ is defined on $(-\infty,\theta(z))$, let $D_{\theta} = D_{\omega,\theta} = \{ (z,r) \,:\, z \in \Sigma_-, \, r < \theta(z) \}$, and let 
\[
\Phi: D_{\theta}  \to \mR^{1+n}, \ \ \Phi(z, r) = \gamma_{z}(r).
\]
The following conditions are equivalent.
\begin{enumerate}
\item[(a)] 
$\Phi: D_{\theta}  \to \Phi(D_{\theta})$ is a diffeomorphism.
\item[(b)]  
There is a smooth function $\varphi = \varphi_{\omega}$ in some open set containing $\Phi(D_{\theta})$ such that  
\[
g(d\varphi, d\varphi) = 0, \quad \varphi|_{\{x \cdot \omega \leq -1\}} = t - x \cdot \omega.
\]
\end{enumerate}
If \emph{(a)} holds, then $\varphi$ in \emph{(b)} is characterized by the property  
\[
d\varphi(\gamma_z(r)) = -\dot{\gamma}_z(r)^{\flat}, \qquad (z,r) \in D_{\theta}.
\]
In particular $\varphi$ is constant along each $\gamma_z$. Moreover, if there is another unit vector $\tilde{\omega}$ such that $g = \gm$ also in $\{ x \cdot \tilde{\omega} \leq -1 \}$, and if \emph{(b)} holds both for $\omega$ and $\tilde{\omega}$ and $d\varphi_{\omega}(z_0) = \lambda d\varphi_{\tilde{\omega}}(z_0)$ for some $z_0 \in \Phi(D_{\omega,\theta}) \cap \Phi(D_{\tilde{\omega},\tilde{\theta}})$ and some $\lambda > 0$, then $\omega = \tilde{\omega}$.
\end{Lemma}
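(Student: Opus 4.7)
The strategy is the classical Lagrangian-manifold approach to the eikonal equation, using the bicharacteristic-geodesic correspondence from Lemma \ref{lemma_bichar_geodesic}. Let $p(z,\zeta) = -\frac{1}{2}g^{ij}(z)\zeta_i\zeta_j$ and set $\xi_\omega = dt - \sum_j \omega_j\,dx^j$, the constant 1-form equal to $d(t - x\cdot\omega)$. The submanifold
\[
\Lambda_0 = \{(z, \xi_\omega) : z \in \Sigma_-\} \subset T^*\mR^{1+n}
\]
is isotropic of dimension $n$ and lies in $\{p=0\}$ because $g = \gm$ on $\Sigma_-$ and $\gm(\xi_\omega,\xi_\omega) = -1 + |\omega|^2 = 0$. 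By Lemma \ref{lemma_bichar_geodesic}, the bicharacteristic through $(z,\xi_\omega)$ is $r \mapsto (\gamma_z(r), -\dot\gamma_z(r)^\flat)$, so the map $\Psi(z,r) := (\Phi(z,r), -\dot\gamma_z(r)^\flat)$ is the flowout of $\Lambda_0$ under $H_p$, and $\Phi = \pi \circ \Psi$ with $\pi$ the base projection. Since the base projection of $H_p|_{\Lambda_0}$ is $(1,\omega)$, which is transverse to $\Sigma_-$, $\Psi$ is an immersion and $\Lambda := \Psi(D_\theta)$ is an immersed Lagrangian of dimension $1+n$.

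\textbf{Direction (a) $\Rightarrow$ (b).} Assuming $\Phi$ is a diffeomorphism, $\Psi$ is injective and hence a diffeomorphism onto $\Lambda$, so $\pi|_\Lambda = \Phi \circ \Psi^{-1}$ is a diffeomorphism onto $\Phi(D_\theta)$. Thus $\Lambda$ is the graph of a closed 1-form $\alpha$ on $\Phi(D_\theta)$. Any point $p$ with $x(p)\cdot\omega \leq -1$ lies in $\Phi(D_\theta)$: indeed $p = \Phi(z,r)$ with $r = x(p)\cdot\omega + 1 \leq 0$ and $z = p - r(1,\omega) \in \Sigma_-$. Along such segments $\dot\gamma_z \equiv (1,\omega)$, so $\alpha \equiv \xi_\omega = d(t - x\cdot\omega)$ on the Minkowski half-space. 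Choosing the primitive of $\alpha$ that equals $t - x\cdot\omega$ there yields a smooth $\varphi$ on an open set containing $\Phi(D_\theta)$ with $\Lambda = \{(z, d\varphi(z))\}$; the eikonal equation follows from $\Lambda \subset \{p = 0\}$, and the characterization $d\varphi(\gamma_z(r)) = -\dot\gamma_z(r)^\flat$ is immediate from the definition of $\Lambda$.

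\textbf{Direction (b) $\Rightarrow$ (a).} Given $\varphi$, the graph $\widetilde\Lambda = \{(z, d\varphi(z))\}$ lies in $\{p=0\}$ and, by the second part of Lemma \ref{lemma_bichar_geodesic}, is $H_p$-invariant. Since $d\varphi|_{\Sigma_-} = \xi_\omega$ from the boundary condition, $\Lambda_0 \subset \widetilde\Lambda$, and $H_p$-invariance then forces $\Lambda \subset \widetilde\Lambda$, proving $d\varphi(\gamma_z(r)) = -\dot\gamma_z(r)^\flat$. For injectivity of $\Phi$, suppose $\gamma_{z_1}(r_1) = \gamma_{z_2}(r_2) = p$; then $\dot\gamma_{z_1}(r_1) = -d\varphi(p)^\sharp = \dot\gamma_{z_2}(r_2)$, and ODE-uniqueness produces a single geodesic $\eta$ with $z_j = \eta(-r_j) \in \Sigma_-$ and $\dot\eta(-r_j) = (1,\omega)$. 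If $r_1 > r_2$, then just before $s = -r_2$ the curve $\eta$ enters $\{x\cdot\omega < -1\}$, where $g = \gm$ and geodesics are straight lines, so $\eta(s) = z_2 + (s+r_2)(1,\omega)$ for all $s \leq -r_2$; but this line has $x(\eta(s))\cdot\omega = -1 + (s+r_2) < -1$ for $s < -r_2$ and never returns to $\Sigma_-$, contradicting $\eta(-r_1) = z_1 \in \Sigma_-$. Invertibility of $D\Psi$ and of $\pi|_{\widetilde\Lambda}$ then show $\Phi$ is a diffeomorphism onto its image.

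\textbf{Direction rigidity and main obstacle.} If $d\varphi_\omega(z_0) = \lambda\,d\varphi_{\tilde\omega}(z_0)$ with $\lambda > 0$, the previous characterization gives $\dot\gamma^\omega_{z_1}(r_0) = \lambda\,\dot\gamma^{\tilde\omega}_{z_2}(\tilde r_0)$ at $z_0$, so ODE-uniqueness yields $\gamma^\omega_{z_1}(r_0 + s/\lambda) = \gamma^{\tilde\omega}_{z_2}(\tilde r_0 + s)$ for $s$ in the common domain, which extends to $s \to -\infty$ by the assumed domain $(-\infty,\theta)$ of each family. In that limit both curves lie in their respective Minkowski half-spaces as straight lines with constant velocities $(1,\omega)$ and $(1,\tilde\omega)$; comparing derivatives forces $\lambda^{-1}(1,\omega) = (1,\tilde\omega)$, hence $\lambda = 1$ and $\omega = \tilde\omega$. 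The main subtlety throughout is the global injectivity of $\Phi$ in (b) $\Rightarrow$ (a), resolved by combining the graph identity $d\varphi = -\dot\gamma^\flat$ with the transversality of $(1,\omega)$ to $\Sigma_-$ and the Minkowski straight-line propagation in $\{x\cdot\omega \leq -1\}$.
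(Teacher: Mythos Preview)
Your proof is correct and follows essentially the same Lagrangian-flowout approach as the paper: the bicharacteristic flowout of $\Lambda_0$ gives a Lagrangian graph from which $\varphi$ is recovered in (a)$\Rightarrow$(b), and in (b)$\Rightarrow$(a) the eikonal solution forces $d\varphi = -\dot\gamma_z^\flat$ along each geodesic, which combined with the straight-line Minkowski propagation in $\{x\cdot\omega\le -1\}$ yields both injectivity and local invertibility of $\Phi$.

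One point you should make explicit in (a)$\Rightarrow$(b): to pass from ``$\Lambda$ is the graph of a closed $1$-form $\alpha$'' to ``choose the primitive of $\alpha$,'' you need $\Phi(D_\theta)$ to have trivial first de~Rham cohomology. The paper handles this by observing that $D_\theta$ is contractible (each fiber $(-\infty,\theta(z))$ is an interval and $\Sigma_-\cong\mR^n$), so by assumption (a) its diffeomorphic image $\Phi(D_\theta)$ is contractible as well. Without this, a closed $1$-form need not be exact, and the construction of a global $\varphi$ would be incomplete.
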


To establish this we need Lemma \ref{lemma_bichar_geodesic}, which is proved next.

\begin{proof}[Proof of Lemma \ref{lemma_bichar_geodesic}]
The integral curve $\eta(r) = (z(r), \zeta(r))$ of $H_p$ with $\eta(0) = (\bar{z}, \bar{\zeta})$ solves the Hamilton equations 
\begin{align*}
\dot{z}^j(r) &= \p_{\zeta_j} p(\eta(r)) = -g^{jk}(z(r)) \zeta_k(r), \\
\dot{\zeta}_l(r) &= -\p_{z^l} p(\eta(r)) = \frac{1}{2} \p_{z^l} g^{km}(z(r)) \zeta_k(r) \zeta_m(r).
\end{align*}
We  denote by $\nabla$ the Levi-Civita connection for $g$. If $\zeta(r)$ is considered as a $1$-form along $z(r)$, one has 
\begin{align*}
\nabla_{\dot{z}} \zeta &= (\dot{\zeta}_l - \Gamma_{jl}^k \dot{z}^j \zeta_k) \,dx^l =  (\frac{1}{2} (\p_{z^l} g^{km}) \zeta_k \zeta_m + \Gamma_{jl}^k g^{jm} \zeta_k \zeta_m) \,dx^l \\
 &= (-\frac{1}{2} g^{ka} (\p_{z^l} g_{ab}) g^{bm} + \Gamma_{jl}^k g^{jm}) \zeta_k \zeta_m \,dx^r.
\end{align*}
After inserting the definition of the Christoffel symbol,  a short computation yields that $\nabla_{\dot{z}} \zeta = 0$, that is, $\zeta(r)$ is the parallel transport of $\bar{\zeta}$ along $z(r)$. Since $\dot{z} = -\zeta^{\sharp}$ and since $\nabla$ commutes with $\phantom{i}^{\sharp}$, this also implies that $\nabla_{\dot{z}} \dot{z} = 0$. Hence $z(r)$ is the geodesic with $z(0) = \bar{z}$ and $\dot{z}(0) = -\bar{\zeta}^{\sharp}$. Now both $\zeta(r)$ and $-\dot{z}(r)^{\flat}$ are parallel along $z(r)$ and have the same initial condition, which yields $\zeta(r) = -\dot{z}(r)^{\flat}$.

Finally, suppose that $\varphi$ is a smooth solution of $p(z,d\varphi(z)) = 0$ near $z([a,b])$ and $\zeta(r_0) = d\varphi(z(r_0))$ where $r_0 \in [a,b]$. Let $Z(r)$ be the solution of the ODE   
\begin{align*}
\dot{Z}(r) &= \nabla_{\zeta} p(Z(r),d\varphi(Z(r)))
\end{align*}
with initial condition $Z(r_0) = z(r_0)$, and write $\Xi(r) = d\varphi(Z(r))$. Then $\Xi_j(r) = \p_j \varphi(Z(r))$ and 
\[
\dot{\Xi}_j(r) = \p_{jk} \varphi(Z(r)) \dot{Z}^k(r).
\]
On the other hand, differentiating the eikonal equation $p(z,d\varphi(z)) = 0$ gives 
\[
\p_{z_j} p(z, d\varphi(z)) + \p_{\zeta_k} p(z, d\varphi(z)) \p_{jk} \varphi(z) = 0.
\]
This holds near $z([a,b])$. It follows that at least for $r$ close to $r_0$ one has 
\[
\dot{\Xi}_j(r) = \p_{jk} \varphi(Z(r)) \p_{\zeta_k} p(Z(r), d\varphi(Z(r))) = -\p_{z_j} p(Z(r), \Xi(r)). 
\]
Thus $(Z(r),\Xi(r))$ solves near $r_0$ the same Hamilton ODE system as $(z(r),\zeta(r))$ and has the same initial condition at $r_0$. By uniqueness for ODEs one has $(Z(r),\Xi(r)) = (z(r),\zeta(r))$ near $r_0$. Since the eikonal equation holds near $z([a,b])$, repeating the above argument for a larger interval gives that $\zeta(r) = d\varphi(z(r))$ near $z([a,b])$. Then 
\[
\p_r(\varphi(z(r))) = d\varphi(\dot{z}(r)) = \zeta(r) \cdot \nabla_{\zeta} p(z(r),\zeta(r)) = 0
\]
by the Euler homogeneity relation and the fact that $p(z, d\varphi) = 0$.
\end{proof}

\begin{proof}[Proof of Lemma \ref{lemma_eikonal1}]
We will write $D_{\theta} = D$ for brevity. Note that $D$ is open since $\theta$ is lower semicontinuous. First assume (a). Then (b) will follow by the geometric approach to solving eikonal equations via Lagrangian manifolds (see e.g.\ \cite[Section 6.4]{hormander} or \cite[Chapter 5]{grigis1994} for more details on the following facts). Let $\alpha = \zeta_j \,dz^j$ be the canonical $1$-form and let $\sigma = d\alpha$ be the symplectic form on $T^* \mR^{1+n}$. We let $\Lambda$ be the flowout of $\Lambda_0 = \{ (z,(1,-\omega)) \,:\, z \in \Sigma_- \}$ with respect to bicharacteristic flow of $p(z,\zeta) =  -\frac{1}{2} g_z(\zeta,\zeta)$, that is, in the notation of Lemma \ref{lemma_bichar_geodesic} 
\[
\Lambda = \{ (z(r),\zeta(r)) \,:\, z(0) \in \Sigma_-, \, \zeta(0) = (1,-\omega), \, r < \theta(z(0)) \}.
\]
Then $\Phi(D)$ is an open set by (a), and $\Lambda$ is a Lagrangian manifold in $T^*(\Phi(D)) \setminus 0$. This means that $\iota^* \sigma = 0$ where $\iota: \Lambda \to T^* (\Phi(D))$ is the natural inclusion. Thus we also have 
\[
d(\iota^* \alpha) = \iota^* (d \alpha) = \iota^* \sigma = 0.
\]

Since $\Phi: D \to \Phi(D)$ is a diffeomorphism by (a) and $D$ is contractible, also $\Phi(D)$ is contractible and its first de Rham cohomology group is trivial. Thus there is $\varphi \in C^{\infty}(\Phi(D))$ with $\iota^* \alpha = d\varphi$. Moreover, since $\Phi: D \to \Phi(D)$ is injective by (a), one has $\Lambda = \{ (z, \lambda(z)) \,:\, z \in \Phi(D) \}$ for some smooth $1$-form $\lambda$ on $\Phi(D)$. If we consider coordinates $z$ on $\mR^{1+n}$ and if $(z,\zeta)$ are associated canonical coordinates on $T^* \mR^{1+n}$, then we have  
\[
(\iota^* \alpha)_z(v) = \alpha_{(z,\lambda(z))}(\iota_* v) = \lambda_j(z) dz^j(\iota_* v) = \lambda_j(z) v^j(z) = \lambda_z(v).
\]
Thus $\lambda = \iota^* \alpha = d\varphi$, so $\Lambda = \{ (z, d\varphi(z)) \,:\, z \in \Phi(D) \}$. Since $\Lambda \subset p^{-1}(0)$ we see that $\varphi$ solves $p(z,d\varphi(z)) = 0$ in $\Phi(D)$. Finally, if $(z,r) \in D$ and if $(z(r), \zeta(r))$ is the bicharacteristic with $(z(0), \zeta(0)) = (z, (1,-\omega))$, then Lemma \ref{lemma_bichar_geodesic} gives 
\[
d \varphi(\gamma_z(r)) = d\varphi(z(r)) = \zeta(r) = -\dot{z}(r)^{\flat} = -\dot{\gamma}_{z}(r)^{\flat}.
\]

Conversely, assume that $\varphi$ is as in (b). Define the vector field $Z = -(d\varphi)^{\sharp}$, so that $Z$ is smooth near $\Phi(D)$. Lemma \ref{lemma_bichar_geodesic} gives that $\gamma_z$ is an integral curve of $Z$, that is,
\[
\dot{\gamma}_z(r) = Z(\gamma_z(r)), \qquad (z,r) \in D.
\]
Thus $\Phi(z,r) = \Psi_r(z)$ for $(z,r) \in D$ where $\Psi_r$ is the flow of $Z$. The derivative of $\Phi$ is given by 
\[
D\Phi|_{(z,r)}(\dot{z}, \dot{r}) = D\Psi_r(z)\dot{z} + Z(\Psi_r(z)) \dot{r} = D\Psi_r(z)(\dot{z} + \dot{r} Z(z)).
\]
Since $D\Psi_r$ is invertible and $Z(z)$ is transverse to $\Sigma_-$, we see that $D\Phi$ is invertible and $\Phi$ is a local diffeomorphism $D \to \Phi(D)$.

We proceed to proving injectivity of $\Phi$. By the assumption $g = \gm$ in $\{ x \cdot \omega \leq -1 \}$, any bicharacteristic $(z(r), \zeta(r))$ with $z(0) \in \Sigma_-$ and $\zeta(0) = d\varphi(z(0)) = (1,-\omega)$ stays in $\{ x \cdot \omega < -1 \}$ for negative times since the geodesic $z(r)$ is a straight line for $r < 0$. The bicharacteristic cannot return from $\{ x \cdot \omega > -1 \}$ to $\Sigma_-$ at any positive time $r$ since otherwise one would have $\zeta(r) = d\varphi(z(r)) = (1,-\omega)$ and $\dot{z}(r) = (1,\omega)$ would point into $\{ x \cdot \omega > -1 \}$, which is not possible. Thus for any $\bar{z} \in \Sigma_-$, the integral curve $\gamma_{\bar{z}}(r)$ stays in $\{ x \cdot \omega < -1 \}$ for negative times and in $\{ x \cdot \omega > -1 \}$ for positive times and it only intersects $\Sigma_-$ when $r=0$.

Now suppose that $z_1, z_2 \in \Sigma_-$ and $\Phi(z_1,r_1) = \Phi(z_2,r_2) = w$. Then the integral curve of $Z$ through $w$ contains both $z_1$ and $z_2$, so the previous discussion gives that $z_1=z_2$. If $r_1 \neq r_2$, then the integral curve contains a loop and hence is periodic, which would contradict with the discussion above. This proves that $\Phi$ is injective and therefore a diffeomorphism $D \to \Phi(D)$.

For the final statement, if $d\varphi_{\omega}(z_0) = \lambda d\varphi_{\tilde{\omega}}(z_0)$ for some $z_0$ and some $\lambda > 0$, then by following the integral curve $(z(r), \zeta(r))$ of $H_p$ through $d\varphi_{\omega}(z_0)$ backwards to $\{x \cdot \omega \leq -1\}$ or to $\{x \cdot \tilde{\omega} \leq -1\}$ (the integral curve must reach both sets since $z_0 \in \Phi(D_{\omega,\theta}) \cap \Phi(D_{\tilde{\omega},\tilde{\theta}})$) we obtain that $\dot{z}(r) = (1,\omega) = (1,\tilde{\omega})$ for $r \ll 0$. Thus $\omega = \tilde{\omega}$.
\end{proof}

We now move to the proof of Proposition \ref{prop_eikonal}. In order to study this case we employ a nontrapping condition stating that no geodesic $\gamma_z(r)$ can stay in a compact set for all $r \geq 0$. This is true under the global hyperbolicity condition \eqref{metric_a2}, since then the Cauchy temporal function must increase to $+\infty$ along such a geodesic.

\begin{proof}[Proof of Proposition \ref{prop_eikonal}]
If $\rho(z_0) > a$, then $\rho(z) > a$ for $z$ near $z_0$ by the smooth dependence on initial data of solutions to ODEs. This shows that the maximal existence time $\rho$ is lower semicontinuous, and therefore $D$ is open and connected. If (a) holds, then Lemma \ref{lemma_eikonal1} gives (b). Now assume that (b) holds. From Lemma \ref{lemma_eikonal1} we obtain that the derivative of $\Phi: D \to \mR^{1+n}$ is invertible everywhere. If we can show that $\Phi$ is a proper map, the Hadamard global inverse function theorem will imply that $\Phi$ is a diffeomorphism from $D$ onto $\mR^{1+n}$.

We now prove that $\Phi$ is proper. It is enough to consider $\omega = e_1$ and write coordinates in $\mR^{1+n}$ as $(t,x_1,y)$ where $y \in \mR^{n-1}$. Let $K \subset \mR^{1+n}$ be compact. Then there is $C > 1$ such that 
\[
K \subset \{ |t| \leq C, \, |x_1| \leq C, \, |y| \leq C \}.
\]
By \eqref{metric_a1} any geodesic $\gamma_z(r)$ for $z = (t,-1,y)$ with $|y| > C$ will stay in $\{ |y| > C \}$. Thus 
\[
\Phi^{-1}(K) \subset \{  |y| \leq C \}.
\]
Since $\varphi$ is continuous, there is $T > 0$ such that $|\varphi| \leq T-1$ in $K$. Now if $(z,r) \in \Phi^{-1}(K)$ where $z = (t,-1,y)$, then the fact that $\varphi$ is constant along $\gamma_z$ gives that 
\[
|t+1| = |\varphi(z)| = |\varphi(\gamma_z(r))| \leq T-1.
\]
This yields 
\[
\Phi^{-1}(K) \subset \{ (z,r) \,:\, z \in S \},
\]
where $S$ is the compact set 
\[
S = \{ (t,-1,y) \in \Sigma_- \,:\, |t| \leq T, \, |y| \leq C \}.
\]

Finally we use the assumption \eqref{metric_a2}, which ensures the existence of a smooth Cauchy temporal function $\tau$. Since $K$ is compact, there is $L > 0$ such that $|\tau| \leq L$ in $K$. Now $\tau$ is strictly increasing along the inextendible causal curve $\gamma_z$, so there is a unique number $R(z) < \rho(z)$ with $\tau(\gamma_z(R(z))) = L$. Thus in particular $\gamma_z(r) \notin K$ when $r > R(z)$. We claim the following uniform nontrapping statement: 
\begin{equation} \label{nontrapping_uniform}
\sup_{z \in S} R(z) < \infty.
\end{equation}
We argue by contradiction and suppose that for any $j$ there is $z_j \in S$ with $R(z_j) > j$. After passing to a subsequence, we may assume that $z_j \to z \in S$ and $R(z_j) \to \infty$. Choose $r_0 > R(z)$, so that $\tau(\gamma_z(r_0)) > L$ and also $\tau(\gamma_{z_j}(r_0)) > L$ for $j$ sufficiently large since $z \mapsto \gamma_z(r_0)$ is continuous. Thus $R(z_j) < r_0$ for $j$ large, which contradicts the fact that $R(z_j) \to \infty$. This proves \eqref{nontrapping_uniform}.

Combining the above facts and writing $R = \sup_{z \in S} R(z)$ proves that 
\[
\Phi^{-1}(K) \subset \{ (z,r) \,:\, z \in S, \, r \leq R \}.
\]
By \eqref{metric_a1} we also have $\Phi^{-1}(K) \subset \{ (z,r) \,:\, r \geq -C+1 \}$. Thus $\Phi^{-1}(K)$ is a closed subset of $D$ that is contained in a compact subset of $\Sigma_- \times \mR$. This concludes the proof that $\Phi^{-1}(K)$ is compact. Hence $\Phi$ is proper and thus it is a diffeomorphism $D \to \mR^{1+n}$. This proves that (b) implies (a).

It remains to show that if (a) holds, then $\varphi = t-x_1$ outside $\mR \times B$ and that $D = \Sigma_- \times \mR$. Since $\varphi = t-x_1$ in $\{ x_1 \leq -1 \}$,  the fact that $\varphi$ is constant along each $\gamma_z$ and \eqref{metric_a1} imply that 
\[
\varphi = t-x_1 \text{ in $\{ x_1 \leq -1 \} \cup \{ |y| \geq 1 \}$}.
\]
Now let $w \in \Sigma_+ \cap \{ |y| \leq 1 \}$. Since $\Phi$ is a diffeomorphism onto $\mR^{1+n}$ there is a unique $z \in \Sigma_-$ such that $w = \gamma_z(r_0)$ for some $r_0$, and by \eqref{metric_a1} one must have $z \in \{ |y| \leq 1 \}$.

Let us study the tangent vector of $\gamma_z$ at time $r_0$, written as 
\[
\dot{\gamma}_z(r_0) = (\lambda_0, \lambda_1, v)
\]
where $\lambda_0, \lambda_1 \in \mR$ and $v \in \mR^{n-1}$. Here $\lambda_0 > 0$ since $\gamma_z$ is future-directed. We also have $\lambda_1 \geq 0$, since otherwise $\gamma_z$ would have come from $\{ x_1 > 1 \}$ before time $r_0$, and then by \eqref{metric_a1} $\gamma_z(r)$ would have been a line segment in $\{ x_1 > 1 \}$ for $r < r_0$, which is not possible. Since $\lambda_1 \geq 0$,  \eqref{metric_a1} implies that $\gamma_z(r)$ for $r \geq r_0$ is a line segment in $\{ x_1 \geq 1 \}$ with constant tangent vector $(\lambda_0, \lambda_1, v)$. Now if $v \neq 0$, then $\gamma_z$ would reach some point $\tilde{w} \in \{ |y| > 1 \}$ in finite time. But one also has $\tilde{w} = \gamma_{\tilde{z}}(\tilde{r})$ for some $\tilde{z} \in \Sigma_- \cap \{ |y| > 1 \}$ by \eqref{metric_a1}. This contradicts the fact that $\Phi$ is a diffeomorphism onto $\mR^{1+n}$, so one must have $v = 0$. The fact that $\dot{\gamma}_z(r_0)$ is null then gives that $\lambda_0 = \lambda_1 > 0$.

We have proved that any $w \in \Sigma_+ \cap \{ |y| \leq 1 \}$ is of the form $w = \gamma_z(r_0)$ for some $z \in \Sigma_- \cap \{ |y| \leq 1 \}$, and there is $\lambda > 0$ such that 
\[
\dot{\gamma}_z(r_0) = \lambda(1, e_1).
\]
Now Lemma \ref{lemma_eikonal1} gives that 
\[
d\varphi(w) = -\dot{\gamma}_z(r_0)^{\flat} = \lambda(1, -e_1).
\]
In particular $\nabla_y \varphi(w) = 0$ for any $w \in \Sigma_+ \cap \{ |y| \leq 1 \}$. Since $\varphi = t-x_1$ in $\{ |y| \geq 1 \}$, it follows that $\varphi = t-x_1$ on $\Sigma_+$. Moreover, $\lambda = \p_t \varphi = 1$ on $\Sigma_+$. Using that $\varphi$ is constant along the curves $\gamma_z$, we obtain $\varphi = t-x_1$ in $\{ x_1 \geq 1 \}$. By \eqref{metric_a1} we finally get the  property that 
\[
\varphi = t-x_1 \text{ outside $\mR \times B$.}
\]

It remains to show that $D = \Sigma_- \times \mR$. Lemma \ref{lemma_eikonal1} gives that 
\[
\dot{\gamma}_z(r) = -d\varphi(\gamma_z(r))^{\sharp} = (1,e_1), \qquad \gamma_z(r) \notin \mR \times B.
\]
Thus by \eqref{metric_a1}, any $\gamma_z(r)$ that reaches $\Sigma_+$ at time $r_+(z)$ must stay in $\{ x_1 > 1 \}$ for $r > r_+(z)$. The fact that $\Phi: D \to \mR^{1+n}$ is a diffeomorphism ensures that any $w \in \Sigma_+$ lies on a unique $\gamma_z$. Since $\dot{\gamma}_z(0) = \dot{\gamma}_z(r_+(z)) = (1,e_1)$, the family $\gamma_z$ satisfies the properties in Lemma \ref{lemma_kappa}, and therefore the map $\kappa: \Sigma_+ \to \Sigma_-$, $w \mapsto z$, is an embedding.

Next we prove that $\kappa$ is surjective. If $w \in \Sigma_{+,\sigma}$ for some $\sigma \in \mR$ and if $z = (t, -1, \tilde{y}) = \kappa(w)$, then the fact that $\varphi$ is constant along $\gamma_z$, together with $\varphi = t-x_1$ for $|x_1| \geq 1$, gives 
\[
\sigma - 1 = \varphi(w) = \varphi(z) = t + 1.
\]
It follows that $\kappa$ maps $\Sigma_{+,\sigma}$ to $\Sigma_{+,\sigma-2}$. Since $\kappa$ is an embedding, $\kappa(\Sigma_{+,\sigma})$ is open in $\Sigma_{+,\sigma-2}$. By \eqref{metric_a1} one has $\kappa(\Sigma_{+,\sigma} \cap \{ |y| \geq 1 \}) = \Sigma_{-,\sigma-2} \cap \{ |y| \geq 1 \}$. On the other hand $\kappa(\Sigma_{+,\sigma} \cap \{ |y| \leq 1 \})$ is compact and hence closed. Combining these facts gives that $\kappa(\Sigma_{+,\sigma})$ is both open and closed, so by connectedness $\kappa(\Sigma_{+,\sigma}) = \Sigma_{-,\sigma-2}$. Thus $\kappa: \Sigma_+ \to \Sigma_-$ is a surjective embedding, hence a diffeomorphism, and any $\gamma_z$ reaches $\Sigma_+$ at some time $r_+(z)$ with $\dot{\gamma}_z(r_+(z)) = (1,e_1)$. It then follows from \eqref{metric_a1} that $D = \Sigma_- \times \mR$.
\end{proof}

\subsection{Plane waves}

Here we give a proof of Lemma \ref{lem_pw_conormal}.

\begin{proof}[Proof of Lemma \ref{lem_pw_conormal}]
Write $\beta = \varphi - s$. We use the progressing wave expansion and consider an approximate plane wave 
\[
U^{(N)} = \sum_{j=0}^{N} u_j H_j(\beta)
\]
where $H_j(r) = r^j H(r)$ and $u_j$ are smooth functions in $\mR^{1+n}$. Recalling the sign convention for $\Box_g$, for any smooth function $v$ we have 
\begin{equation} \label{boxg_product}
\Box_g(v H_j(\beta)) = -g(d\beta,d\beta) v H_j''(\beta) + (L v) H_j'(\beta) + (\Box_g v) H_j(\beta)
\end{equation}
where $L$ is the first order operator (with $Z$ as in Lemma \ref{lemma_eikonal}) 
\[
Lv = 2 Zv + (\Box_g \beta) v.
\]
Therefore also 
\begin{align*}
\Box_g U^{(N)} = \sum_{j=0}^{N} \left[ -g(d\beta,d\beta) u_j H_j''(\beta) + (L u_j) H_j'(\beta) + (\Box_g u_j) H_j(\beta) \right].
\end{align*}
Since $g(d\beta,d\beta) = 0$, this reduces to 
\begin{align*}
\Box_g U^{(N)} = (L u_0) \delta(\beta) + \sum_{j=1}^{N} \left[ j L u_j + (\Box_g u_{j-1}) \right] H_{j-1}(\beta) + (\Box_g u_{N}) H_{N}(\beta).
\end{align*}

The next step is to choose the functions $u_j$ as solutions of the transport equations 
\[
L u_0 = 0, \qquad u_0|_{\{ x \cdot \omega \leq -1 \}} = 1,
\]
and for $j \geq 1$ 
\[
j L u_j + (\Box_g u_{j-1}) = 0, \qquad u_j|_{\{ x \cdot \omega \leq -1 \}} = 0.
\]
Recall from Lemma \ref{lemma_eikonal} that the integral curves of $Z$ are the null geodesics $\gamma_z(r)$. The diffeomorphism property in Proposition \ref{prop_eikonal} ensures that no integral curve of $Z$ is trapped in a compact set, and this implies that the transport equations have smooth solutions $u_j$ for any $j \geq 1$. More concretely, any $w \in \mR^{1+n}$ can be written uniquely as $w = \Phi(z,r) = \gamma_z(r)$ where $z = z(w)$ and $r = r(w)$ depend smoothly on $w$. Then $u_0$ is given by 
\begin{align*}
u_0(w) = u_0(\gamma_z(r)) &= \exp \left[ -\frac{1}{2} \int_0^r (\Box_g \beta)(\gamma_z(\rho)) \,d\rho \right],
\end{align*}
and for $j \geq 1$ one has 
\begin{align*}
u_j(\gamma_z(r)) &= -\frac{1}{2j} e^{-\frac{1}{2} \int_0^r \Box_g \beta(\gamma_z(\rho)) \,d\rho} \int_0^r (\Box_g u_{j-1})(\gamma_z(\rho)) e^{\frac{1}{2} \int_0^{\rho} (\Box_g \beta)(\gamma_z(s)) \,ds} \,d\rho.
\end{align*}

With the above choices, one has 
\[
\Box_g U^{(N)} = (\Box_g u_{N}) H_{N}(\beta) \in H^N_{\mathrm{loc}}(\mR^{1+n}), \qquad U^{(N)}|_{\{ x \cdot \omega \leq -1 \}} = H(t-x \cdot \omega - s).
\]
We choose $\tau_0$ so that \eqref{uzero_more_precise_bound} holds, and take $R^{(N)}$ to be the forward solution of 
\[
\Box_g R^{(N)} = -\Box_g U^{(N)}, \qquad R^{(N)}|_{\{ \tau < \tau_0 \}} = 0.
\]
Proposition \ref{prop_wp} gives that $R^{(N)} \in H^{N-1}_{\mathrm{loc}}(\mR^{1+n})$. We wish to prove that for any $N$, the plane wave $U$ is given by 
\[
U = U^{(N)} + R^{(N).}
\]
By the uniqueness part of Proposition \ref{prop_pw}, this follows if we can show that 
\begin{equation} \label{un_values_past}
U^{(N)}|_{\{ \tau < \tau_0\}} = H(t-x \cdot \omega -s)|_{\{ \tau < \tau_0 \}}.
\end{equation}
Writing $\omega = e_1$ and $x = (x_1, x')$, condition \eqref{metric_a1} ensures that $L = \p_t + \p_{x_1}$ in  $\{ x_1 \leq -1 \text{ or } |x'| \geq 1 \}$. Thus we have 
\[
u_0 = 1 \text{ and } u_j = 0 \text{ in } \{ x_1 \leq - 1 \text{ or } |x'| \geq 1 \} \text{ when $j \geq 1$.}
\]
In particular $U^{(N)} = H(t-x\cdot \omega - s)$ in $ \{ x_1 \leq - 1 \text{ or } |x'| \geq 1 \}$. On the other hand, we claim that 
\begin{equation} \label{un_varphi}
\{ \tau \leq \tau_0 \} \cap \{ x_1 \geq -1, \, |x'| \leq 1 \} \subset \{ \varphi < s \}.
\end{equation}
Since $U^{(N)} = 0$ in $\{ \varphi < s \}$ and $H(t-x \cdot \omega -s) = 0$ in $\{ \tau \leq \tau_0 \} \cap \{ x_1 \geq -1, \, |x'| \leq 1 \}$ by \eqref{uzero_more_precise_bound}, this would imply \eqref{un_values_past}. To show \eqref{un_varphi}, fix $w \in \{ \tau \leq \tau_0 \} \cap \{ x_1 \geq -1, \, |x'| \leq 1 \}$ and use the diffeomorphism property in Proposition \ref{prop_eikonal} to write $w = \gamma_z(r)$ for some $z \in \Sigma_-$ and $r \geq 0$. Note that $z \in \{ |x'| \leq 1 \}$ by \eqref{metric_a1}. Now $\tau$ is increasing along $\gamma_z$, so 
\[
\tau(z) \leq \tau(w) \leq \tau_0.
\]
This implies that $z \in \{ t < s-1, \, |x'| \leq 1 \}$ by Proposition \ref{prop_tau_proper} (a) if we chose $\tau_0$ small enough to begin with. The function $\varphi$ is constant along $\gamma_z$, which gives  $\varphi(w) = \varphi(z) < s$ and proves \eqref{un_varphi}.

To show that $U \in I^{-\frac{1+n}{4}}(\mR^{1+n}, Y)$, we use \cite[Definition 18.2.6]{hormander} and fix $N \geq 1$ together with first order operators $L_1, \ldots, L_N$ with smooth coefficients tangential to $Y$. We proved above that 
\begin{equation} \label{pw_rep_nplusone}
U = U^{(N+1)} + R^{(N+1)}.
\end{equation}
We need to show that 
\[
L_1 \ldots L_N U \in L^2_{\mathrm{loc}}(\mR^{1+n}).
\]
Since $R^{(N+1)} \in H^{N}_{\mathrm{loc}}$, we have $L_1 \ldots L_N R^{(N+1)} \in L^2_{\mathrm{loc}}$. On the other hand the function $U^{(N+1)}$ is smooth away from $Y$, so by \eqref{pw_rep_nplusone} it is enough to show that any $z_0 \in Y$ has a neighborhood $V$ in $\mR^{1+n}$ such that 
\[
L_1 \ldots L_N U^{(N+1)}|_V \in L^2(V).
\]
Fix $z_0 \in Y$ and choose $V$ so small that there are local coordinates $y = (y_0, \ldots, y_n)$ in $V$ with $\beta = y_0$ in $V$. Then in $V$ one has 
\[
U^{(N+1)} = \sum_{j=0}^{N+1} u_j(y) H_j(y_0)
\]
where $u_j$ are smooth in $V$. By \cite[Lemma 18.2.5]{hormander}, in $V$ each operator $L_j$ has the form 
\[
a_0(y) y_0 \p_{y_0} + \sum_{j=1}^n a_j(y) \p_{y_j} + b(y)
\]
where $a_j$ and $b$ are smooth. It follows that $L_1 \ldots L_N U^{(N+1)}|_V \in L^2(V)$ as required, and therefore $U \in I^{-\frac{1+n}{4}}(\mR^{1+n}, Y)$.

Finally, to show that $U = u H(\beta)$ where $u$ is smooth, we note that $U|_{\{ \beta \geq 0 \}} \in H^N_{\mathrm{loc}}(\{ \beta \geq 0 \})$ by \eqref{pw_rep_nplusone}. Since this is true for any $N$ and since $U$ is supported in $\{ \beta \geq 0 \}$, the required representation follows.
\end{proof}

\bibliographystyle{alpha}
\bibliography{master}

\begin{thebibliography}{DKSU09}

\bibitem[AB95]{alinhac1995}
S.~Alinhac and M.~S. Baouendi.
\newblock A non uniqueness result for operators of principal type.
\newblock {\em Math. Z.}, 220(4):561--568, 1995.

\bibitem[AFO21]{alexakis2024}
Spyros Alexakis, Ali Feizmohammadi, and Lauri Oksanen.
\newblock Lorentzian {C}alder\'{o}n problem near the {M}inkowski geometry,
  2021.
\newblock arXiv:2112.01663.

\bibitem[AFO22]{alexakisfeizmohammadioksanen2022}
Spyros Alexakis, Ali Feizmohammadi, and Lauri Oksanen.
\newblock Lorentzian {C}alder\'{o}n problem under curvature bounds.
\newblock {\em Invent. Math.}, 229(1):87--138, 2022.

\bibitem[{Ali}83]{alinhac1983}
S.~{Alinhac}.
\newblock {Non-unicit\'e du probl\`eme de Cauchy}.
\newblock {\em {Ann. Math. (2)}}, 117:77--108, 1983.

\bibitem[B{\"a}r15]{bar2015}
Christian B{\"a}r.
\newblock Green-hyperbolic operators on globally hyperbolic spacetimes.
\newblock {\em Comm. Math. Phys.}, 333(3):1585--1615, 2015.

\bibitem[Bel88]{belishev1988}
M.~I. Belishev.
\newblock On an approach to multidimensional inverse problems for the wave
  equation.
\newblock {\em Sov. Math., Dokl.}, 36(3):481--484, 1988.

\bibitem[BGP07]{barginouxpfaffle2007}
Christian B{\"a}r, Nicolas Ginoux, and Frank Pf{\"a}ffle.
\newblock {\em Wave equations on {L}orentzian manifolds and quantization}.
\newblock ESI Lectures in Mathematics and Physics. European Mathematical
  Society (EMS), Z\"{u}rich, 2007.

\bibitem[BK81]{bukhgeim1981}
A.~L. Bukhgeim and M.~V. Klibanov.
\newblock Global uniqueness of a class of multidimensional inverse problems.
\newblock {\em Sov. Math., Dokl.}, 24:244--247, 1981.

\bibitem[BK92]{belishev1992}
Michael~I. Belishev and Yaroslav~V. Kurylev.
\newblock To the reconstruction of a {Riemannian} manifold via its spectral
  data ({BC}- method).
\newblock {\em Commun. Partial Differ. Equations}, 17(5-6):767--804, 1992.

\bibitem[BS03]{bernal2003}
Antonio~N. Bernal and Miguel S{\'a}nchez.
\newblock On smooth {Cauchy} hypersurfaces and {Geroch}'s splitting theorem.
\newblock {\em Commun. Math. Phys.}, 243(3):461--470, 2003.

\bibitem[BS05]{bernal2005}
Antonio~N. Bernal and Miguel S{\'a}nchez.
\newblock Smoothness of time functions and the metric splitting of globally
  hyperbolic spacetimes.
\newblock {\em Commun. Math. Phys.}, 257(1):43--50, 2005.

\bibitem[Cal80]{calderon1980}
Alberto~P. Calder\'on.
\newblock On an inverse boundary value problem.
\newblock In {\em Seminar on {N}umerical {A}nalysis and its {A}pplications to
  {C}ontinuum {P}hysics ({R}io de {J}aneiro, 1980)}, pages 65--73. Soc. Brasil.
  Mat., Rio de Janeiro, 1980.

\bibitem[CFO23]{carstea2023}
C{\u{a}}t{\u{a}}lin~I. C{\^a}rstea, Ali Feizmohammadi, and Lauri Oksanen.
\newblock Remarks on the anisotropic {Calder{\'o}n} problem.
\newblock {\em Proc. Am. Math. Soc.}, 151(10):4461--4473, 2023.

\bibitem[DKLS16]{dos-santos-ferreira2016}
David {Dos Santos Ferreira}, Yaroslav {Kurylev}, Matti {Lassas}, and Mikko
  {Salo}.
\newblock {The Calder\'on problem in transversally anisotropic geometries}.
\newblock {\em {J. Eur. Math. Soc. (JEMS)}}, 18(11):2579--2626, 2016.

\bibitem[DKSU09]{dos-santos-ferreira2009}
David {Dos Santos Ferreira}, Carlos~E. {Kenig}, Mikko {Salo}, and Gunther
  {Uhlmann}.
\newblock {Limiting Carleman weights and anisotropic inverse problems}.
\newblock {\em {Invent. Math.}}, 178(1):119--171, 2009.

\bibitem[{Esk}07]{eskin2007}
G.~{Eskin}.
\newblock {Inverse hyperbolic problems with time-dependent coefficients}.
\newblock {\em {Commun. Partial Differ. Equations}}, 32(11):1737--1758, 2007.

\bibitem[FIKO21]{feizmohammadi2021b}
Ali Feizmohammadi, Joonas Ilmavirta, Yavar Kian, and Lauri Oksanen.
\newblock Recovery of time-dependent coefficients from boundary data for
  hyperbolic equations.
\newblock {\em J. Spectr. Theory}, 11(3):1107--1143, 2021.

\bibitem[FIO21]{feizmohammadi2021a}
Ali Feizmohammadi, Joonas Ilmavirta, and Lauri Oksanen.
\newblock The light ray transform in stationary and static {Lorentzian}
  geometries.
\newblock {\em J. Geom. Anal.}, 31(4):3656--3682, 2021.

\bibitem[Gel57]{gelfand1957}
I.~Gelfand.
\newblock Some aspects of functional analysis and algebra.
\newblock Proc. {I}nternat. {Congr}. {Math}. 1954 {Amsterdam} 1, 253-276
  (1957)., 1957.

\bibitem[Gro83]{gromov1983}
Mikhael Gromov.
\newblock Filling {Riemannian} manifolds.
\newblock {\em J. Differ. Geom.}, 18:1--147, 1983.

\bibitem[GS94]{grigis1994}
Alain Grigis and Johannes Sj{\"o}strand.
\newblock {\em Microlocal analysis for differential operators. {An}
  introduction}, volume 196 of {\em Lond. Math. Soc. Lect. Note Ser.}
\newblock Cambridge: Cambridge University Press, 1994.

\bibitem[GU93]{greenleaf1993}
Allan Greenleaf and Gunther Uhlmann.
\newblock Recovering singularities of a potential from singularities of
  scattering data.
\newblock {\em Commun. Math. Phys.}, 157(3):549--572, 1993.

\bibitem[Her07]{herglotz1907}
G.~Herglotz.
\newblock {\"U}ber das {Benndorf}sche {Problem} der
  {Fortpflanzungsgeschwindigkeit} der {Erdbebenstrahlen}.
\newblock {\em Phys. Z.}, 8:145--147, 1907.

\bibitem[HM19]{hounnonkpe2019}
R.~A. Hounnonkpe and E.~Minguzzi.
\newblock Globally hyperbolic spacetimes can be defined without the `causal'
  condition.
\newblock {\em Classical Quantum Gravity}, 36(19):9, 2019.
\newblock Id/No 197001.

\bibitem[H{\"o}r85]{hormander}
Lars H{\"o}rmander.
\newblock {\em The analysis of linear partial differential operators, vols.\
  I--IV}.
\newblock Grundlehren Math. Wiss. Springer, Cham, 1985.

\bibitem[KKL01]{katchalov2001}
Alexander Katchalov, Yaroslav Kurylev, and Matti Lassas.
\newblock {\em Inverse boundary spectral problems}, volume 123 of {\em Chapman
  Hall/CRC Monogr. Surv. Pure Appl. Math.}
\newblock Boca Raton, FL: CRC Press, 2001.

\bibitem[KOP18]{kurylev2018a}
Yaroslav Kurylev, Lauri Oksanen, and Gabriel~P. Paternain.
\newblock Inverse problems for the connection {Laplacian}.
\newblock {\em J. Differ. Geom.}, 110(3):457--494, 2018.

\bibitem[KP02]{krantz2002}
Steven~G. Krantz and Harold~R. Parks.
\newblock {\em The implicit function theorem. {History}, theory, and
  applications}.
\newblock Boston, MA: Birkh{\"a}user, 2002.

\bibitem[Las18]{lassas2018}
Matti Lassas.
\newblock Inverse problems for linear and non-linear hyperbolic equations.
\newblock In {\em Proceedings of the international congress of mathematicians
  2018, ICM 2018, Rio de Janeiro, Brazil, August 1--9, 2018. Volume IV. Invited
  lectures}, pages 3751--3771. Hackensack, NJ: World Scientific; Rio de
  Janeiro: Sociedade Brasileira de Matem{\'a}tica (SBM), 2018.

\bibitem[Ler19]{lerner2019}
Nicolas Lerner.
\newblock {\em Carleman inequalities. {An} introduction and more}, volume 353
  of {\em Grundlehren Math. Wiss.}
\newblock Cham: Springer, 2019.

\bibitem[LO14]{lassas2014}
Matti Lassas and Lauri Oksanen.
\newblock Inverse problem for the {Riemannian} wave equation with {Dirichlet}
  data and {Neumann} data on disjoint sets.
\newblock {\em Duke Math. J.}, 163(6):1071--1103, 2014.

\bibitem[LU01]{lassas2001}
Matti {Lassas} and Gunther {Uhlmann}.
\newblock {On determining a Riemannian manifold from the Dirichlet-to-Neumann
  map}.
\newblock {\em {Ann. Sci. \'Ec. Norm. Sup\'er. (4)}}, 34(5):771--787, 2001.

\bibitem[{Mic}81]{michel1981}
Rene {Michel}.
\newblock {Sur la rigidit{\'e} impos{\'e}e par la longueur des
  g{\'e}od{\'e}siques}.
\newblock {\em {Invent. Math.}}, 65:71--83, 1981.

\bibitem[MS22]{ma2022}
Shiqi Ma and Mikko Salo.
\newblock Fixed angle inverse scattering in the presence of a {Riemannian}
  metric.
\newblock {\em J. Inverse Ill-Posed Probl.}, 30(4):495--520, 2022.

\bibitem[MST23]{mazzucchelli2023}
Marco Mazzucchelli, Mikko Salo, and Leo Tzou.
\newblock A general support theorem for analytic double fibration transforms,
  2023.
\newblock arXiv:2306.05906.

\bibitem[MT]{melrose2024}
R.~B. {Melrose} and M.~E. {Taylor}.
\newblock {\em Boundary value problems for wave equation with grazing and
  gliding rays}.
\newblock Book in progress.

\bibitem[MT24]{munozthon2024}
Sebastián Muñoz-Thon.
\newblock Scattering rigidity for standard stationary manifolds via timelike
  geodesics, 2024.
\newblock arXiv:2404.09449.

\bibitem[O'N83]{oneill1983}
Barrett O'Neill.
\newblock Semi-{Riemannian} geometry. {With} applications to relativity.
\newblock Pure and {Applied} {Mathematics}, 103. {New} {York}-{London} etc.:
  {Academic} {Press}. xiii, 468 p. {\$} 45.00 (1983)., 1983.

\bibitem[ORS24]{oksanen2024}
Lauri Oksanen, Rakesh, and Mikko Salo.
\newblock Fixed angle inverse scattering for {R}iemannian metrics, 2024.

\bibitem[PSU23]{paternain2023}
Gabriel~P. Paternain, Mikko Salo, and Gunther Uhlmann.
\newblock {\em Geometric inverse problems. {With} emphasis on two dimensions},
  volume 204 of {\em Camb. Stud. Adv. Math.}
\newblock Cambridge: Cambridge University Press, 2023.

\bibitem[{Rin}09]{ringstrom2009}
Hans {Ringstr\"om}.
\newblock {\em {The Cauchy problem in general relativity.}}
\newblock Z\"urich: European Mathematical Society (EMS), 2009.

\bibitem[Rom02]{romanov2002}
V.~G. Romanov.
\newblock {\em Investigation methods for inverse problems}.
\newblock Inverse Ill-Posed Probl. Ser. Utrecht: VSP, 2002.

\bibitem[RS20a]{rakesh2020a}
Rakesh and Mikko Salo.
\newblock Fixed angle inverse scattering for almost symmetric or controlled
  perturbations.
\newblock {\em SIAM J. Math. Anal.}, 52(6):5467--5499, 2020.

\bibitem[RS20b]{rakesh2020b}
Rakesh and Mikko Salo.
\newblock The fixed angle scattering problem and wave equation inverse problems
  with two measurements.
\newblock {\em Inverse Probl.}, 36(3):42, 2020.
\newblock Id/No 035005.

\bibitem[S{\'a}n22]{sanchez2022}
Miguel S{\'a}nchez.
\newblock Globally hyperbolic spacetimes: slicings, boundaries and
  counterexamples.
\newblock {\em Gen. Relativ. Gravitation}, 54(10):52, 2022.
\newblock Id/No 124.

\bibitem[Ste17]{stefanov2017}
Plamen Stefanov.
\newblock Support theorems for the light ray transform on analytic {Lorentzian}
  manifolds.
\newblock {\em Proc. Am. Math. Soc.}, 145(3):1259--1274, 2017.

\bibitem[Ste24a]{stefanov2024b}
Plamen Stefanov.
\newblock Boundary determination and local rigidity of analytic metrics in the
  {L}orentzian scattering rigidity problem, 2024.
\newblock arXiv:2404.15541.

\bibitem[Ste24b]{stefanov2024}
Plamen Stefanov.
\newblock The {Lorentzian} scattering rigidity problem and rigidity of
  stationary metrics.
\newblock {\em J. Geom. Anal.}, 34(9):25, 2024.
\newblock Id/No 267.

\bibitem[SU05]{stefanov2005a}
Plamen {Stefanov} and Gunther {Uhlmann}.
\newblock {Stable determination of generic simple metrics from the hyperbolic
  Dirichlet-to-Neumann map}.
\newblock {\em {Int. Math. Res. Not.}}, 2005(17):1047--1061, 2005.

\bibitem[SUV16]{stefanov2016}
Plamen Stefanov, Gunther Uhlmann, and Andras Vasy.
\newblock On the stable recovery of a metric from the hyperbolic {DN} map with
  incomplete data.
\newblock {\em Inverse Probl. Imaging}, 10(4):1141--1147, 2016.

\bibitem[SUVZ19]{stefanov2019b}
Plamen Stefanov, Gunther Uhlmann, Andras Vasy, and Hanming Zhou.
\newblock Travel time tomography.
\newblock {\em Acta Math. Sin., Engl. Ser.}, 35(6):1085--1114, 2019.

\bibitem[SY18]{stefanov2018}
Plamen Stefanov and Yang Yang.
\newblock The inverse problem for the {Dirichlet}-to-{Neumann} map on
  {Lorentzian} manifolds.
\newblock {\em Anal. PDE}, 11(6):1381--1414, 2018.

\bibitem[{Tat}95]{tataru1995}
Daniel {Tataru}.
\newblock {Unique continuation for solutions to PDE's; between H\"ormander's
  theorem and Holmgren's theorem}.
\newblock {\em {Commun. Partial Differ. Equations}}, 20(5-6):855--884, 1995.

\bibitem[Uhl09]{uhlmann2009}
G.~Uhlmann.
\newblock Electrical impedance tomography and {Calder{\'o}n}'s problem.
\newblock {\em Inverse Probl.}, 25(12):39, 2009.
\newblock Id/No 123011.

\bibitem[UYZ21]{uhlmann2021}
Gunther Uhlmann, Yang Yang, and Hanming Zhou.
\newblock Travel time tomography in stationary spacetimes.
\newblock {\em J. Geom. Anal.}, 31(10):9573--9596, 2021.

\bibitem[Wan24]{wang2024}
Yiran Wang.
\newblock Rigidity of {L}orentzian metrics with the same scattering relations,
  2024.

\end{thebibliography}

\end{document}